\documentclass{article}
\usepackage[english]{babel}
\usepackage[T1]{fontenc}

\usepackage{lmodern}						
\usepackage{amsmath,amsfonts}				
\usepackage[dvips]{graphicx}                       
\usepackage{epsf,latexsym,amssymb}
\usepackage{mathrsfs}
\usepackage{amsthm}
\usepackage{amscd}
\usepackage{pstricks-add}					
\usepackage{makeidx}

\newtheorem{teo}{Theorem}[section]
\newtheorem{lem}[teo]{Lemma}
\newtheorem{prop}[teo]{Proposition}
\newtheorem{defi}[teo]{Definition}

\newtheorem{cor}[teo]{Corollary}
\newtheorem{oss}[teo]{Remark}

\newcommand{\E}{\widetilde{Ein}}
\newcommand{\M}{\widetilde{M}}
\newcommand{\Z}{\mathbb{Z}}
\newcommand{\R}{\mathbb{R}}
\newcommand{\N}{\mathbb{N}}
\newcommand{\Sf}{\mathbb{S}}

\begin{document}

\title{Maximal extension of conformally flat globally hyperbolic space-times}
\author{Clara Rossi Salvemini}
\maketitle

\abstract{The notion of maximal extension of a globally hyperbolic space-time arises from the notion of maximal  solutions of the Cauchy problem associated to the Einstein's equations of general relativity. Cho\-quet-Bruhat and Geroch proved (\cite{Choquet-Bruhat}) that if the Cauchy problem has a local solution, this solution has a unique maximal extension.  Since the causal structure of a space-time is  invariant under conformal changes of metrics we may generalize this notion of maximality to the conformal setting.  In this article we focus on conformally flat space-times of dimension greater or equal than $3$. In this case, by a Lorentzian version of Liouville's theorem, these space-times are locally modeled on the Einstein space-time. In the first part we  use this fact to prove the existence and uniqueness of the maximum extension for globally hyperbolic conformally flat space-times. In the second  part we find a causal characterization of globally hyperbolic conformally flat maximal space-times whose developing map is a global diffeomorphism. }

\tableofcontents

\section{Introduction}

\paragraph{Cauchy problem associated to the Einstein equation.}
The concept of maximal extension of a globally hyperbolic space-time comes from a PDE problem: the existence and uniqueness of maximal solutions for the Cauchy problem associated to the Einstein equation.
The Einstein equation relates a physical object, the stress-energy tensor, with a geometric one,  the curvature tensor of the universe. We can write it as following:
\begin{eqnarray}\label{eq.Ein}
Ric(g)-1/2 scal(g) g +\Lambda g =8\pi T
\end{eqnarray}
\noindent where $g$ is a Lorentzian metric, $Ric(g)$ is the Ricci tensor, $scal(g)$ is the scalar curvature, $\Lambda$ the cosmological constant and  $T$ the stress-energy tensor which is a symmetric tensor of type $(2,0)$.\\
In vacuum  the tensor $T$ is zero and the equation becomes: $Ric(g)=0$.
 A solution for this equation is just a Lorentzian manifold with Ricci curvature zero.
In the general case, the meaning of what constitute a solution is not clear, because the topology of the universe and the stress-energy tensor are not defined a priori.
A possible strategy to find solutions of the equation (\ref{eq.Ein}) is to assume that the solution is globally hyperbolic. By Geroch's Theorem ( \cite{GerochTimeFunction} and  \cite[p. 1155]{SachsWuTimeFunction1977}) every globally hyperbolic space-time is diffeomorphic to a product $S\times \mathbb{R}$, so that every slice $S\times \{t\}$ is a spacelike submanifold. Then we can formulate a Cauchy problem associated to the equation (\ref{eq.Ein}) as follows. The initial data
 is a Riemannian manifold $(S,h)$ of  dimension $n$ equipped with a symmetric $(2,0)$-tensor $II$, and a solution is a Lorentzian metric $g$ over the product manifold $M:=S\times \mathbb{R}$ such that $g$  verifies the equation  ($\ref{eq.Ein}$) for a tensor $T$ given \em a priori \em on $M$ and $II$ is the  shape tensor of the sub-manifold  $S\times \{0\}$ of $M$. \\
It turns out that a necessary condition to have a solution is that $h$ and $II$ verify some equations,  named the {constraint equations} of general relativity (\cite{Hawking}, ch.$7$ ) when $T=0$.
Geroch and Choquet-Bruhat   proved (in \cite{C-Blocale} ) that , when $T=0$, the constraint equations are also a sufficient condition to the existence and unicity  of local solutions of the Cauchy problem. \\
One may ask   how the solutions develop  far away from the initial data. Is it possible to have different developments out of a neighborhood of the initial data? We say that a solution $M$ \em  extends \em another $N$, if $N$ is isometric to a neighborhood of the initial data in $M$. A \em maximal solution \em is then a solution which has only trivial extensions. In \cite{Choquet-Bruhat} Choquet-Bruhat et Geroch have proved:

\begin{teo}\label{C-B extensione globale}
Any local solution $(M,g)$ of the Cauchy's problem has a maximal extension, which is unique up to isometry.
\end{teo}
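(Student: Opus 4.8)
The plan is to exhibit the maximal extension as a maximal element of a suitable partially ordered set of developments, and then to upgrade ``maximal'' to ``unique maximum'' by means of a geometric uniqueness theorem. First I would fix the Cauchy data $(S,h,II)$ carried by $(M,g)$ and introduce the collection $\mathcal{D}$ of all globally hyperbolic developments of these data: pairs $(N,j)$ where $N$ is a globally hyperbolic Lorentzian manifold solving (\ref{eq.Ein}) with $T=0$, and $j:S\hookrightarrow N$ is an embedding onto a Cauchy hypersurface inducing $h$ and $II$. I would order $\mathcal{D}$ by declaring $(N_1,j_1)\preceq(N_2,j_2)$ when there is an isometric embedding $\psi:N_1\to N_2$ with $\psi\circ j_1=j_2$ whose image is a causally convex open domain. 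A preliminary set-theoretic point is that $\mathcal{D}$ is not literally a set; I would deal with this in the usual way, observing that every development is second countable and hence supported on a subset of a fixed set of cardinality $2^{\aleph_0}$, so that one may pass to isomorphism-representatives forming an honest set.

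For existence I would invoke Zorn's lemma, the only issue being that every chain $\{(N_\alpha,j_\alpha)\}$ admits an upper bound. I would build it as the direct limit $N_\infty=\varinjlim N_\alpha$ along the coherent isometric embeddings of the chain: the metrics glue to a Lorentzian metric on $N_\infty$ still solving (\ref{eq.Ein}), the images of $S$ coincide, and one checks that $N_\infty$ is again globally hyperbolic with Cauchy surface $j_\infty(S)$, the point being that any inextendible causal curve of $N_\infty$, upon intersecting a member $N_\alpha$ of the chain, must already meet the Cauchy surface there. Zorn then yields a maximal element $(M_{\max},j_{\max})$.

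The substantial input, and the step on which everything hinges, is geometric uniqueness: any two developments $(N_1,j_1)$ and $(N_2,j_2)$ admit a common extension in $\mathcal{D}$. To obtain it I would first reduce Einstein's equations to a quasilinear hyperbolic system by passing to harmonic (wave) coordinates, so that Choquet-Bruhat's local existence and uniqueness theorem (\cite{C-Blocale}) applies: near any point of the Cauchy surface the development is determined up to isometry by $(h,II)$. Propagating this identification, the set of points of $N_1$ isometric — compatibly with the data — to points of $N_2$ is open, nonempty, and carries a unique matching isometry; let $U_1\subseteq N_1$, $U_2\subseteq N_2$ be the maximal such regions and $\Phi:U_1\to U_2$ the isometry. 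One then glues $N_1$ and $N_2$ along $\Phi$ to produce a candidate common extension $N_3$.

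The hardest part is precisely this gluing, for two reasons. First, $N_3$ may a priori fail to be Hausdorff: a causal geodesic could ``branch'' at the edge of the common region, and one must show, using global hyperbolicity together with the local uniqueness just established, that no such branching occurs, so that $\Phi$ either extends or else stops only at genuinely distinct points with distinct futures. Second, one must verify that $N_3$ is again globally hyperbolic with the same Cauchy surface, i.e. that the gluing creates no inextendible causal curve missing $j_3(S)$. Granting the common-extension property, uniqueness of the maximum follows formally: if $(M_{\max},j_{\max})$ and $(M'_{\max},j'_{\max})$ are both maximal, a common extension $(N_3,j_3)$ satisfies $M_{\max}\preceq N_3$ and $M'_{\max}\preceq N_3$, whence maximality forces $M_{\max}\cong N_3\cong M'_{\max}$, so the maximal extension is unique up to isometry, as claimed in Theorem~\ref{C-B extensione globale}.
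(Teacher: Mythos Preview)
The paper does not prove Theorem~\ref{C-B extensione globale} directly; it is cited from \cite{Choquet-Bruhat}. What the paper does prove is the conformally-flat analogue (Theorem~\ref{teo max}), and it remarks that those arguments adapt to the isometric/rigid-category setting. So the relevant comparison is with Section~\ref{sec: C0 extension}.

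Your outline is essentially the classical Choquet-Bruhat--Geroch strategy and is correct in broad strokes, but it diverges from the paper's argument in two places worth noting. First, for uniqueness you aim to show that \emph{any} two developments $(N_1,j_1),(N_2,j_2)$ admit a common extension, by locating ``maximal isometric regions'' $U_1,U_2$ and gluing along them. The existence of such maximal regions is not automatic, and your sentence ``let $U_1\subseteq N_1$, $U_2\subseteq N_2$ be the maximal such regions'' hides exactly the difficulty. The paper handles this differently: it restricts attention to two \emph{maximal} extensions $M_1^{\max},M_2^{\max}$, runs a \emph{second} Zorn argument on the poset $\mathcal H$ of spacetimes that Cauchy-embed simultaneously into both, and glues along the resulting maximal common sub-extension $\overline M$. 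This buys a clean existence statement for the gluing region. The Hausdorff step is then carried out via Lemma~\ref{lemma: frontiera Cauchy-plongement} (the boundary of a Cauchy-embedding's image is achronal and edgeless), the rigidity property (Liouville in the conformally-flat case; local PDE uniqueness in yours) to extend the identification across that boundary, and Lemma~\ref{lemma: developpement vide} on null generators of Cauchy horizons to reach a contradiction. You correctly flag Hausdorffness as the crux but supply no mechanism.

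Second, in the existence step you do not address second-countability of the direct limit $N_\infty$. The paper singles this out as ``the delicate point'' (Proposition~\ref{max espace-temps}), since the chain need not be countable; in the conformally-flat case it is resolved by pulling back a Riemannian metric via the developing map. Your outline should indicate how this is handled in the Einstein setting.
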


\paragraph{Maximal extension into a given category.}\label{section:category}
It is well known that Theorem \ref{C-B extensione globale} naturally generalizes to larger families of space-times  which are not necessarily solutions of the Einstein equations.   Let start by  a definition of maximality  for globally hyperbolic space-times:

\begin{defi}\label{Cauchy-plongemens iso}
Let $M$ and $N$ be two globally hyperbolic space-times.
An isometric embedding $f: M\rightarrow N$ is a \em Cauchy-embedding \em if there exists a Cauchy hypersurface  $S\subset M$ such that $f(S)$ is a Cauchy hypersurface of $N$. In this case we say that $N$ \em extends \em $M$.\\
A globally hyperbolic space-time $M$ is \em maximal \em if every  Cauchy-embedding of $M$ into an other space-time is onto.
\end{defi}

 This more general notion of maximality coincides with the classical one in the case of space-times which are solutions of the same Cauchy problem. Therefore now the problem of existence and uniqueness of the maximal extension of a given globally hyperbolic space-time is well-defined even for space-times which are not solutions of the Cauchy problem. \\
The arguments involved in the proof of the existence of the maximal extension in Theorem \ref{C-B extensione globale} easily generalize: every  globally hyperbolic space-time admits a maximal extension, but in general there  is no reason  for this extension to be unique. 
However the maximal extension is unique if we consider "rigid categories" of space-times:
\begin{defi}\label{categoria}
\emph{A \em category of space-times \em is a  family $\mathscr{F}$ of space-times such that:}
 \begin{itemize}
\item $\mathscr{F}$ is stable by isometry: if $(M,g)$ is in  $\mathscr{F}$ and $(N,h)$ is isometric to $(M,g)$ then $(N,h)$ is in $\mathscr{F}$.\\
\item $\mathscr{F}$ is stable by restriction: if $(M,g)$ is in $\mathscr{F}$ then for every open set $U$ of $M$, $(U,g\vert_U)$ is in $\mathscr{F}$.\\
\item  $\mathscr{F}$ is stable by gluing: if there is an open covering $(U_i)_{i \in I}$ of $(M,g)$ such that for every $i$ of $I$ the restriction $(U_i,g\vert_{U_i})$ is in  $\mathscr{F}$ then $(M,g)$ is in $\mathscr{F}$.
\end{itemize}
\end{defi}

\begin{defi}
A space-time $M$ in a category $\mathscr{F}$ is \em $\mathscr{F}$-maximal \em if  every Cauchy-embedding of $M$ into another space-time of the same category $\mathscr{F}$ is onto.
\end{defi}

\noindent Again, the arguments for the existence in Theorem \ref{C-B extensione globale} apply: every space-time in a category  $\mathscr{F}$ always has  a $\mathscr{F}$-maximal  extension. The uniqueness  comes from some additional hypothesis:

\begin{defi}\label{def: categoria rigida}
A category $\mathscr{C}$  of  space-times is \em rigid  \em  if given two globally hyperbolic space-times $M$  and $N$  in $\mathscr{C}$,  and an isometry $f: I^\pm(p)\to I^\pm(q)$, where  $p\in M$ and $q\in N$,  then $f$ extends into an isometry $\hat{f}: U\cup I^\pm(p)\to V\cup I^\pm(q)$, where $U$  and $V$  are neighborhoods of $p$  and $q$.
\end{defi}

 This is the key property used in the proof of the theorem  of  Choquet-Bruhat and Geroch, who considered the category of space-times which are solutions of the same Cauchy problem; one of the steps of the proof is to show that this category is rigid.\\  
Another important rigid category of space-times is the $(G,X)$-category, where $X$ is a fixed space-time and   $G$ its isometry group. The elements of this category  are space-times which are $(G,X)$-manifolds. It's easy to see that it is rigid: let $M$ and  $N$ two such space-times and let $f: I^-(p)\to I^-(q)$ an isometry such that $f(p)=q$ and let $\phi: U\to X$ and $\psi:V\to X$ two charts on the neighborhood of $p$ and $q$. By definition of $(G,X)$-manifolds, the isometry 
$$\psi\circ f\circ\psi^{-1}: \psi(V\cap I^-(q) )\to \psi(U\cap I^-(p))$$
 extends into a unique element  $g\in G$. Then the map   $\hat{f}:=\psi\circ g\circ\phi $ is an isometry between $U$ and $V$.\\
Space-times of constant curvature are examples of $(G,X)$-manifolds:     $X$ is  the Minkowski space-time $\R^{1,n}$, when the curvature is zero,  the de Sitter space-time $d\Sf_{1,n}$, when it is positive, and  the anti-de Sitter space-time $Ad\Sf_{1,n}$, when it is negative. Then we have notions of $\R^{1,n}$-maximal extension,  $d\Sf_{1,n}$-maximal  extension, and   $Ad\Sf_{1,n}$-maximal  extension. \\
Since constant curvature space-times  are  solutions of the Einstein equation, by Theorem \ref{C-B extensione globale}, these extensions are unique up to isometry. In fact, Theorem \ref{C-B extensione globale} is true for every rigid category:

\begin{teo}\label{teo: massimalità categiria rigida}
Every globally hyperbolic space-time in a rigid category $\mathscr{C}$ has a unique $\mathscr{C}$-maximal  extension.
\end{teo}

 This statement is quite well-known by the experts of the field; in the present paper we will consider a slightly different problem, where we consider extensions by conformal embeddings, not necessarily isometric (see the next section).
The tools and proof involved in this new context can be easily adapted to the isometric case, providing a complete
proof of Theorem \ref{teo: massimalità categiria rigida}.



\paragraph{Maximal conformally flat extension.}
Since the causal structure is a conformal invariant, the notion of maximality defined in the previous section naturally generalizes to conformal classes of Lorentzian metrics. This is obtained by taking \em conformal Cauchy-embeddings\em , instead of isometric ones,  in Definition \ref{Cauchy-plongemens iso}.   Then we  say that a space-time $M$ is \em $C$-maximal \em if every conformal Cauchy-embedding of $M$ into another globally hyperbolic space-time is onto.\\
Even if here we are in the conformal context we can still use the language of category by just replacing the word "isometry" with "conformal diffeomorphism" in Definition \ref{categoria}. However, just as in the isometric case, the proof of the existence and uniqueness of the maximal extension requires some additional rigidity property.
Moreover the family of conformally flat space-times is a sub-category of the $C$-category: we call it the  $C_0$-category.  So we can have a well-defined notion of $C_0$-maximality:
a conformally flat space-time $M$ is \em $C_0$-maximal\em , if every conformal Cauchy-embedding of $M$ into another  globally hyperbolic conformally flat space-time in onto.\\
Now that we have defined the notion of $C$-maximality and $C_0$-maximality, we can again ask the questions: does every conformally flat globally hyperbolic space-time have a $C$-maximal and $C_0$-maximal extension? Are these extensions unique up to conformal diffeomorphisms?
The answer is the following generalization of Theorem \ref{C-B extensione globale} to the conformal and conformally flat case. 

\begin{teo}\label{teo max}
Every  globally hyperbolic conformally flat space-time $M$ of dimension $\geq 3$ has a unique $C_0$-maximal extension. This extension is unique up to conformal diffeomorphism.
\end{teo}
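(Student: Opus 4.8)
The plan is to reduce Theorem \ref{teo max} to the rigid-category machinery, transported to the conformal setting. The starting point is the Lorentzian version of Liouville's theorem invoked in the abstract: in dimension $\geq 3$ a conformally flat space-time is exactly a $(G,X)$-manifold with model space $X=\E$ the Einstein universe and structural group $G=\mathrm{Conf}(\E)$ its conformal group. Hence the $C_0$-category coincides with the $(G,X)$-category for this $(G,X)$, and every $M$ in it carries a developing map $D\colon\M\to\E$ together with a holonomy morphism $\rho\colon\pi_1(M)\to G$. The whole argument will then be the conformal analogue of the $(G,X)$-discussion sketched in the introduction, with \emph{conformal diffeomorphism} replacing \emph{isometry}.

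First I would isolate the purely local rigidity statement that drives everything: any conformal diffeomorphism between two connected open subsets of $\E$ is the restriction of a unique element of $G$. This is again Liouville's theorem, now in its unique-continuation form, and it is precisely the ingredient that separates $\dim\geq 3$ from the two-dimensional case, where conformal maps are far too flexible for any such rigidity to hold.

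Next I would use this to show that the $C_0$-category is rigid in the conformal analogue of Definition \ref{def: categoria rigida}. Given conformally flat globally hyperbolic $M,N$ and a conformal diffeomorphism $f\colon I^\pm(p)\to I^\pm(q)$ with $f(p)=q$, I choose developing charts $\phi\colon U\to\E$ and $\psi\colon V\to\E$ on simply connected neighborhoods of $p$ and $q$ on which $D$ is a diffeomorphism onto its image. Reading $f$ through these charts gives a conformal diffeomorphism between open subsets of $\E$, which by the local statement extends to a unique $g\in G$; then $\hat f:=\psi^{-1}\circ g\circ\phi$ is a conformal extension of $f$ to a neighborhood of $p$. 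With rigidity established I invoke the conformal analogue of Theorem \ref{teo: massimalit� categiria rigida}, whose proof the present paper develops: existence of a $C_0$-maximal extension comes from the general direct-limit (or Zorn) argument over all extensions inside $C_0$, which always applies; uniqueness follows because, given two maximal extensions of $M$, rigidity lets one propagate the identification near the common Cauchy hypersurface outward along causal cones into a globally defined conformal diffeomorphism between them, which maximality of both forces to be onto.

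The step I expect to be the main obstacle is this rigidity, and inside it the extension of $f$ across the vertex $p$: the cone $I^\pm(p)$ is open with $p$ on its boundary, so one must verify that the element $g$ produced by Liouville sends $p$ to $q$ and that $\hat f$ is independent of the chosen charts. This is exactly where the uniqueness clause of Liouville's theorem is essential, since it is what guarantees both that the local extensions glue consistently with no monodromy obstruction and that the comparison map between two maximal extensions is globally well defined.
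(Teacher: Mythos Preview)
Your high-level strategy---Zorn for existence, Liouville rigidity for uniqueness---matches the paper's, but you have misidentified where the work actually lies. The rigidity step you single out as the main obstacle is in fact the routine part: the paper dispatches it in the introduction as an ``easy'' consequence of the $(G,X)$-structure, essentially by the chart argument you describe. What you dismiss as routine, on the other hand, is where the real content is.

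For existence, saying the Zorn/direct-limit argument ``always applies'' hides the genuine difficulty: one must show the direct limit $\overline{M}$ of a chain of extensions is a \emph{manifold}, and in particular second-countable. The chain is not assumed countable, so this is not automatic. The paper handles it by using the developing map $\overline{d}\colon\overline{M}\to Ein_{1,n}$ (built from the developing maps $d_i$ and the holonomy corrections $g_{i,j}$) to pull back a Riemannian metric, whose balls give a countable basis. The non-simply-connected case requires an additional equivariance argument.

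For uniqueness, ``propagate the identification outward along causal cones'' is too vague to stand as a proof. The paper's argument is: run a second Zorn to find a maximal common sub-extension $\overline{M}$ embedding in both $M_1^{\max}$ and $M_2^{\max}$; glue the two along $\overline{M}$ to form $\mathcal{M}$; prove $\mathcal{M}$ is Hausdorff; prove $\mathcal{M}$ is globally hyperbolic; then maximality forces both projections to be onto. The Hausdorff step is the crux and needs more than Liouville alone: it relies on the structure of the boundary of a Cauchy-embedding (achronal, edgeless, split into future and past parts---Lemma~\ref{lemma: frontiera Cauchy-plongement}) together with Cauchy-horizon theory (Lemma~\ref{lemma: developpement vide}) to rule out non-separated pairs. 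Liouville enters exactly once, at the ``Key fact'' where the local conformal identification is extended across a boundary point, but the contradiction that follows is a causal argument, not an analytic-continuation one. Your outline does not touch any of this machinery.
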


 In the  section $ \ref{sec: C0 extension}$ of this article we give a proof of this result, using the fact that  conformally flat space-times are $(G,X)$-manifolds, where  $X$ is the Einstein space-time and $G$ its conformal group of diffeomorphisms.  This gives an additional rigidity property:  if we define the \em conformally rigid category \em by  taking conformal diffeomorphisms, instead of isometries,  in Definition \ref{def: categoria rigida}, the $C_0$-category is conformally rigid. \\
A generalization of Theorem \ref{teo max} to the  $C$-category seems much more difficult to prove. We do not think that the techniques used here are enough to deal with the $C$-category. We do not know for the moment if a  $C$-maximal extension does exist for every space-time, and if it is necessarily unique. The question is still open.\\
Summarizing, we have several notions of maximality for a globally hyperbolic space-time. These different notions are not completely independent but there are some implications. Just as in Riemannian geometry,  constant curvature Lorentzian manifolds  are conformally flat.  Let  $M$ be  a constant curvature space-time. Then $M$ is  a $(G,X)$-manifold (where $X$ is equal to $\R^{1,n}$, $AdS_{1,n}$ or $dS_{1,n}$),  and we have:
\begin{align}\label{implicazioni massimalità}
M \ \text{is} \ C\text{-maximal}\Rightarrow M \ \text{ is }  C_0\text{-maximal} \Rightarrow M \ \ \text{is } X\text{-maximal}
\end{align}

 The converse  implications are not  true in general:  we could have a $C_0$-maximal space-time which is not $C$-maximal, or $X$-maximal space-time which is not  $C_0$-maximal, etc..
In another paper  we will develop new tools which allow us to prove that in fact  these two inverse implications are true: every $C_0$-maximal space-time  and  every $AdS_{1,n}$-maximal or $\R^{1,n}$-maximal space-time is also $C$-maximal. Conversely every $dS_{1,n}$-maximal space time always has a non trivial $C_0$-extension. 

\paragraph{Completeness of $C_0$-maximal space-times.}
In the second part of the paper we study the developing map of a maximal conformally flat space-time. We provide a sufficient and necessary condition on the causal structure of the space-time for the developing map to be a global diffeomorphism; in other words, a causal characterization of conformally flat space-times which are complete as  $(G,X)$-manifolds.
The uniqueness of the $C_0$-maximal extension in Theorem \ref{teo max}  implies:

\begin{teo}\label{teo: compatta s connessa}
The universal cover of the Einstein space-time is the only globally hyperbolic conformally flat space-time of dimension $\geq3$ which is $C_0$-maximal, simply connected, and admitting a compact Cauchy hypersurface.
\end{teo}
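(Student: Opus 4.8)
The strategy is to exhibit $\E$ itself as a space-time satisfying all the listed properties and then to invoke the uniqueness part of Theorem \ref{teo max}. First I would check that $\E=\widetilde{Ein}$, which as a manifold is $\R\times\Sf^{n}$ carrying the conformal class of $-dt^2+g_{\Sf^{n}}$, is globally hyperbolic and conformally flat, of dimension $\geq 3$, simply connected, with compact Cauchy hypersurface $\{0\}\times\Sf^{n}$. The only nonformal property to establish is that $\E$ is $C_0$-maximal (third paragraph). Granting this, let $M$ be any space-time as in the statement, with compact Cauchy hypersurface $S$. Since $M$ is simply connected, conformally flat of dimension $\geq 3$, and chronological (being globally hyperbolic), the developing map is defined on $M$ itself and lifts to a conformal local diffeomorphism $\mathcal{D}\colon M\to\E$. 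I would then argue that $M$ and $\E$ both arise as $C_0$-maximal extensions of one and the same small globally hyperbolic neighborhood of the Cauchy hypersurface, so that the uniqueness in Theorem \ref{teo max} forces a conformal diffeomorphism $M\cong\E$.

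The core step is to understand $\mathcal{D}$ near $S$. The restriction $\mathcal{D}\vert_S\colon S\to\E$ is a spacelike immersion, so its tangent spaces avoid the timelike direction $\partial_t$; composing with the projection $\pi\colon\E\to\Sf^{n}$ therefore yields a local diffeomorphism $\pi\circ\mathcal{D}\vert_S\colon S\to\Sf^{n}$. As $S$ is compact this map is a covering, and since $\Sf^{n}$ is simply connected for $\dim M\geq 3$ it is a diffeomorphism; in particular $\mathcal{D}\vert_S$ is injective and its image $\Sigma:=\mathcal{D}(S)$ is a spacelike graph over $\Sf^{n}$. A compact spacelike graph in $\E$ is acausal, hence a Cauchy hypersurface, so $\mathcal{D}\vert_S$ identifies $S$ with a Cauchy hypersurface of $\E$. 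Because $\mathcal{D}$ is a local diffeomorphism injective on the compact set $S$, it is injective on a thin globally hyperbolic slab $U\cong S\times(-\varepsilon,\varepsilon)$ around $S$, and maps $U$ conformally onto an open set $V$ which is globally hyperbolic with Cauchy hypersurface $\Sigma$. Thus $N:=U\cong V$ is a globally hyperbolic conformally flat space-time that Cauchy-embeds both into $M$ (the inclusion of $U$, with $S$ Cauchy) and into $\E$ (the inclusion of $V$, with $\Sigma$ Cauchy).

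It remains to prove that $\E$ is $C_0$-maximal, which I would do by a boundary argument. Let $\psi\colon\E\to N'$ be a conformal Cauchy-embedding into a globally hyperbolic conformally flat $N'$. Since a globally hyperbolic space-time deformation retracts onto any Cauchy hypersurface and $\psi$ identifies Cauchy hypersurfaces, $\pi_1(N')\cong\pi_1(\Sf^{n})=1$, so $N'$ is simply connected and carries a developing map $\mathcal{D}_{N'}\colon N'\to\E$. The composition $\mathcal{D}_{N'}\circ\psi$ is a developing map of the conformally flat structure of $\E$, hence differs from the tautological one by an element $g$ of the conformal group $G$: thus $\mathcal{D}_{N'}\circ\psi=g$ is a global diffeomorphism of $\E$, so $\mathcal{D}_{N'}(\psi(\E))=\E$ and $\mathcal{D}_{N'}$ is injective on $\psi(\E)$. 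If $\psi(\E)$ were a proper open subset of the connected manifold $N'$, choose $q$ in its frontier and a sequence $p_k\in\psi(\E)$ with $p_k\to q$; continuity gives $\mathcal{D}_{N'}(p_k)\to\mathcal{D}_{N'}(q)\in\E$, while $\mathcal{D}_{N'}(p_k)=g(\psi^{-1}(p_k))$ forces $p_k=\psi(\psi^{-1}(p_k))$ to converge inside $\psi(\E)$, contradicting $q\notin\psi(\E)$ by Hausdorffness. Hence $\psi(\E)=N'$ and $\E$ is $C_0$-maximal.

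With both facts in hand, $M$ and $\E$ are $C_0$-maximal space-times each extending the common space-time $N$, so by the uniqueness in Theorem \ref{teo max} they are conformally diffeomorphic, which is the assertion. The main obstacle is the identification near the Cauchy hypersurface in the second paragraph: the developing map, a priori neither injective nor surjective, must nonetheless restrict to a diffeomorphism from the compact hypersurface $S$ onto a Cauchy hypersurface of $\E$. This is exactly where compactness of $S$ and the hypothesis $\dim M\geq 3$ (through the simple connectedness of $\Sf^{n}$) enter decisively; without them the covering $\pi\circ\mathcal{D}\vert_S$ need not be trivial and the two space-times need not share a common developed neighborhood.
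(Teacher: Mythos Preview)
Your proof is correct, and it shares its pivotal step with the paper: both establish that the composition $\pi\circ D\vert_{S}\colon S\to\Sf^{n}$ is a diffeomorphism, so that $D(S)$ is a compact spacelike graph and hence a Cauchy hypersurface of $\E$. The two arguments diverge thereafter.

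The paper proceeds more directly. Rather than restricting to a slab, it proves that $D$ is globally injective on $M$: it first records (Lemma~\ref{lem: D iniettiva su geodesiche luce}) that $D$ is injective along every causal curve, and then argues via the orbits of the lifted vector field $\partial_t$ that two points with the same $D$-image would force two distinct points of $S$ to have the same image under $D\vert_{S}$, which has just been shown impossible. Thus $D\colon M\to\E$ is itself a conformal Cauchy embedding, and the $C_0$-maximality of $M$ (not the uniqueness statement of Theorem~\ref{teo max}) immediately gives surjectivity. For the covering step the paper also works slightly differently: it shows that $\pi\circ D\vert_{S}$ is length non-decreasing for the pulled-back metric and invokes the completeness criterion of Lemma~\ref{lem: rivestimento}, whereas you use the cleaner observation that a local diffeomorphism with compact domain is proper, hence a covering.

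What each approach buys: the paper's route is self-contained once Lemma~\ref{lem: D iniettiva su geodesiche luce} is in hand and yields the stronger conclusion that the developing map itself is the conformal equivalence. Your route sidesteps the global-injectivity argument entirely at the cost of invoking the full uniqueness part of Theorem~\ref{teo max}; it also gives a tidier reason for the covering property and a more explicit justification of the $C_0$-maximality of $\E$ than the paper's one-line appeal to Liouville. Either argument is perfectly adequate here.
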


This result implies that a $C_0$-maximal globally hyperbolic conformally flat space-time $M$ is a finite quotient of the Einstein space-times if and only if  the lift, to its  universal cover, of every Cauchy hypersurface, is compact. Moreover, because we know very well the causal structure of the Einstein space-time, and in particular we have a very clear description of its lightlike geodesics, we have:

\begin{teo} \label{teo: conjugate}
Let $M$ be a conformally flat globally hyperbolic $C_0$-maximal space-time of dimension $\geq 3$  which has two freely homotopic lightlike geodesics which are distinct but with the same ends. Then $M$ is a finite quotient of universal cover of the Einstein space-time.
\end{teo}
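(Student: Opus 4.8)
The plan is to lift everything to the universal cover, use the developing map to import the refocusing phenomenon of the Einstein universe, and then feed the resulting compact Cauchy hypersurface into Theorem \ref{teo: compatta s connessa}. First I would pass to the universal cover $\M$, which is again globally hyperbolic, conformally flat and simply connected, and which carries a developing map $D\colon\M\to\E$ since $M$ is a $(G,X)$-manifold with $X=\E$. Because the two lightlike geodesics are freely homotopic and share their ends, they lift to two \emph{distinct} lightlike geodesics $c_1,c_2$ of $\M$ with the \emph{same} past endpoint $p^-$ and future endpoint $p^+$ (if the lifts coincided they would project to the same geodesic of $M$). As $D$ is a conformal local diffeomorphism it carries lightlike geodesics to lightlike geodesics, so $Dc_1,Dc_2$ join $Dp^-$ to $Dp^+$; they remain distinct, for otherwise $c_1$ and $c_2$ would share initial point and, through the injective $dD$, initial direction at $p^-$, hence coincide.

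Next I would exploit the explicit geometry of $\E=\R\times\Sf^{n-1}$, in which a future lightlike geodesic issued from $(t_0,q)$ has the form $s\mapsto(t_0+s,\sigma(s))$ for a unit-speed great circle $\sigma$ of $\Sf^{n-1}$ starting at $q$. Two such geodesics with distinct initial directions first meet again exactly at the antipode, at parameter $s=\pi$; hence the existence of $Dc_1\neq Dc_2$ with common ends forces $Dp^+=(t_0+\pi,-q)$ and, crucially, the \emph{refocusing} of $\E$: every lightlike geodesic from $Dp^-$ passes through $Dp^+$ at parameter $\pi$. I would then transport this back to $\M$ by a connectedness argument on the sphere $\Sf^{n-2}$ of future lightlike directions at $p^-$, where the hypothesis $\dim\geq 3$ enters to guarantee that this sphere is connected. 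Letting $A\subset\Sf^{n-2}$ be the set of directions $v$ whose geodesic $c_v$ is defined up to $\pi$ with $c_v(\pi)=p^+$, one checks that $A$ is nonempty ($c_1,c_2\in A$); that it is open, because near $p^+$ the map $D$ is a diffeomorphism and, by the refocusing of $\E$, the developed geodesics all return to $Dp^+$, which pulls back to $p^+$; and that it is closed, by the limit curve theorem applied inside the compact causal diamond $J^+(p^-)\cap J^-(p^+)$. Thus $A=\Sf^{n-2}$ and all lightlike geodesics from $p^-$ refocus at $p^+$ in $\M$.

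From this full refocusing I would deduce that $\partial J^+(p^-)$ is compact: it is the image of the compact set $\Sf^{n-2}\times[0,\pi]$ under the lightlike exponential map, it closes up at $p^+$, and past $p^+$ the generators have crossed and entered $I^+(p^-)$, so the boundary does not continue. Being an achronal boundary it is edgeless, and a compact edgeless achronal set in a globally hyperbolic space-time is a Cauchy hypersurface; hence $\M$ admits a compact Cauchy hypersurface. The uniqueness in Theorem \ref{teo max} then shows that the universal cover $\M$ of the $C_0$-maximal $M$ is itself $C_0$-maximal: the $C_0$-maximal extension of $\M$ inherits a $\pi_1(M)$-action whose quotient $C_0$-extends $M$, which is therefore trivial. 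Now $\M$ is simply connected, globally hyperbolic, conformally flat, $C_0$-maximal and has a compact Cauchy hypersurface, so Theorem \ref{teo: compatta s connessa} gives $\M=\E$. Consequently $M=\E/\pi_1(M)$, and since a Cauchy hypersurface lifts to the compact $\Sf^{n-1}$ the group $\pi_1(M)$ acts freely and properly discontinuously on a compact manifold, hence is finite; thus $M$ is a finite quotient of $\E$.

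The main obstacle is the propagation step, namely promoting the refocusing from the two given geodesics to all lightlike geodesics of $\M$: openness rests on matching the developed family to the rigid conjugate structure of $\E$ through the local diffeomorphism $D$, while closedness requires the limit curve theorem together with compactness of causal diamonds, so that both halves genuinely use global hyperbolicity. The whole mechanism collapses in dimension $2$, where the direction sphere $\Sf^{0}$ is disconnected and no refocusing occurs, which is exactly why the hypothesis $\dim\geq 3$ is indispensable.
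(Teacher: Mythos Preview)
Your argument is essentially the paper's: lift to $\M$, use $D$ to detect conjugate points in $\E_{1,n}$, run an open/closed argument on the (connected, since $\dim\geq 3$) sphere of null directions at $p^-$ to obtain full refocusing at a second point, conclude that the corresponding achronal boundary is compact and edgeless, hence a Cauchy hypersurface by Proposition \ref{prop: ipersuperficie de Cauchy compatta}, and then apply Theorem \ref{teo: compatta s connessa}. The paper packages the central step as the separate Theorem \ref{1 punto coniugato}; you carry it out inline.

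One point to correct: from two distinct null geodesics in $\E_{1,n}$ sharing endpoints you infer $Dp^+=\sigma(Dp^-)$, but Remark \ref{oss: geodesiche luce E intersezioni} only yields $Dp^+=\sigma^k(Dp^-)$ for some $k\geq 1$. For $k>1$ your description of $\partial J^+(p^-)$ as the exponential image of $\Sf^{n-2}\times[0,\pi]$ breaks down, since generators may already cross (and hence leave the achronal boundary) before reaching $p^+$. The paper's remedy is to observe that the single geodesic $D(c_1)$ then passes through the conjugate pair $Dp^-,\ \sigma(Dp^-)$, take $q\in c_1$ with $Dq=\sigma(Dp^-)$, and run the whole open/closed and compactness argument with the pair $(p^-,q)$; this is exactly the hypothesis isolated in Theorem \ref{1 punto coniugato}. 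With that adjustment your proof goes through. Incidentally, you make explicit something the paper leaves tacit, namely that the $C_0$-maximality of $M$ passes to $\M$ via the uniqueness in Theorem \ref{teo max}, which is needed before invoking Theorem \ref{teo: compatta s connessa}.
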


In a following paper we will show some consequences of this result. It gives some information about the domains of injectivity of the developing map of a $C_0$-maximal space-time. It turns out that the developing map of a conformally flat globally hyperbolic space-time $M$ has  to be injective on the causal  past and  future of each point. Moreover, the image of these set is a regular Minkowski domain, future or past complete  (following the definition of \cite{Bonsante} and \cite{BarbotGHflat} to classify the $\R^{1,n}$-maximal globally hyperbolic  space-time with compact Cauchy hypersurface).   

\paragraph{Organization of the paper.}
In  Section \ref{sec: preliminari} we recall some classical results in causality of space-time, in particular we recall the properties of globally hyperbolic space-times. We also give a detailed description of the Einstein space-time, which will be an essential tool in the proof of the mains results.  The rigidity properties of conformal  maps and Liouville's Theorem will be also recalled in this section. In section \ref{sec: C0 extension}, after some properties of conformal Cauchy-embeddings, we give the proof of Theorem \ref{teo max}.  In Section \ref{sec: thm punti coniugati} we prove Theorem \ref{teo: compatta s connessa}  and Theorem \ref{teo: conjugate}.

\section{Preliminaries}\label{sec: preliminari}

In this section we recall some classical definitions and results about conformally flat globally hyperbolic space-times and we gives the proof of some technical lemmas and propositions which play a rule in the proof of the main results of this article. The theory of space-times has been largely studied by Hawking, Penrose and many others. A quite complete exposition of the main results can be found in  \cite{Hawking},  \cite{Beem}, \cite{ONeil}, \cite{Penrose}. For a clear exposition of the hierarchy of causal notions associated to space-times see also \cite{CausalGerarchy}.

\subsection{Causal structure of space-times}

\paragraph{Space-times.}
A  \em Lorentzian $(n+1)$-manifold \em is  a smooth $(n+1)$-manifold $M$ (which includes the topological assumption that M is metrizable and with countable basis), endowed with a symmetric non-degenerate $2$-form $g$ with signature $(n,1)$. \\
A non zero\footnote{Our convention is to consider the  zero vector as a spacelike vector. In particular, a causal
or lightlike vector is non zero.} tangent vector $v$ is \em spacelike \em (resp. \em  timelike,  lightlike, causal\em) if $g(v,v)$ is positive  (resp. \em  negative,  null, non-positive\em).\\
In each tangent space $T_pM$ the cone of causal vectors has two connected components $C_p^+$ and $C^-_p$.  The manifold $M$ is \em time-orientable \em if it is possible to make a continuous choice,  in each tangent space of $M$, of one of them.  This means that over $M$ there are two continuous cone fields: one of them is chosen to be the future one, denoted by $C^+$, and the other to be the past one, denoted by $C^-$. Such a choice defines a \em time-orientation \em on $M$. A non-spacelike vector $w\in T_pM$  is   \em future-directed  \em if  it is in $C^+$ and \em past-directed \em if it is in $C^-$.
Remark that up to a double cover $M$ is always time-orientable.

\begin{defi}
A \textbf{space-time} is a connected orientable and  time-orienta\-ble  Lorentzian manifold provided with a time-orientation.
\end{defi}

 A  \em differentiable causal curve \em (respectively \em timelike, spacelike, lightlike\em)  of  a space-time $M$  is a $C^1$ map  $c : ]a, b[ \rightarrow M$  such that at every point its tangent vector  is  causal (respectively timelike, spacelike, lightlike).
In particular, every causal curve $c$ is an immersion and the vectors $c'(t)$ have the same time orientation  for all $t \in ]a, b[ $: the curve $c$   is said to be \em future \em or \em past \em oriented following the time-orientation of  $c'(t)$.\\
The \emph{causal future} $J_U^+(A)$  (respectively \emph{chronological future} $I_U^+(A)$) of a subset $A$ of  $M$ relative to an open set $U$ is the set of future ends of all future causal (respectively timelike) curves  starting  from a point of $A$ and contained in $U$. The \em chronological past \em  and the \em causal past \em of $A$ relative to an open set $U$, noted $I_U^-(A)$ and $J_U^-(A)$, are the chronological and causal futures of $A$ for the opposite time-orientation on $M$. When $U=M$ the chronological and causal  past an future of a set $A$ are noted $I^\pm(A)$ and $ J^\pm(A)$. Since a timelike curve is in particular a causal curve we have
$I_U^\pm(A)\subset J_U^\pm(A)$. These sets give what is called the \em causal structure \em of $M$.\\
It's possible to give a more general notion of causal curve without changing the causal structure of $M$.
  A \em future  causal curve \em is a $C^0$ map $c :]a, b[\subset \R\rightarrow M$ such that for all $t_0\in ]a, b[$ and for all neighborhood
$U$ of $c(t_0)$, there exists $\varepsilon >0$ such that for all $t\in I:=]t_0 - \varepsilon , t_0 + \varepsilon [$ we have
$c(t)\in J^-_U(t_0)$ if  $t\leq t_0$ and $c(t)\in J^+_U(t_0)$ if  $t\geq t_0$. We can define the past causal curves like the future ones for the opposite chronological orientation.\\
It is not hard to prove that the causal curves just defined are more regular than continuous: they are locally Lipschitz  (see \cite{Barbot} for the proof in Minkowski space-time). Moreover, by definition, it is clear that if there is a causal curve from a point $p$ to a point $q$, then there is a piecewise differentiable causal curve from $p$ to $q$.\\
The causal structure  naturally defines two relations:  given  $x,y \in M$, we write 
$x<y$ iff $x\in I^-(y)$ and $x\leq y$ iff $x\in J^-(y)$. They are called the \em causal relations \em of $M$. By the definition it is clear that the relations $<$ and $\leq$ are transitive and that the relation $\leq$ is reflexive. If $\leq$ is also antisymmetric we say that $M$ is a \em causal space-time \em. This means that we cannot have causal closed curves in the space-time and in this case $\leq$ is a partial order on $M$.
The causal relations are more than transitive. Given $x,y,z\in M$ then
$x\leq y \text{ and } y <z $ imply $ x<z$ (and $x< y \text{ and } y \leq z$ imply $  x<z$). This is a consequence of the following proposition:

\begin{prop}[\cite{ONeil} p. 294]\label{prop: geodesiche luce e causalità}
In a Lorentz manifold   if $\alpha$ is a causal curve   from $p$ to $q$ that is not a null pregeodesic, then there is a timelike curve from $p$ to $q$ arbitrarily close to $\alpha$.
\end{prop}


\paragraph{Globally hyperbolic space-times.} \label{GH}
In Riemannian geometry it is often useful to consider an open neighborhood which is geodesically convex: this is the image by the exponential map restricted to some open neighborhood of zero. In a pseudo-Riemannian manifold we also have the exponential map and we can make a similar construction. However from the point of view of the causal structure there is another notion of convexity: an open set $U$ of a space-time \em is causally convex \em if every causal curve between two of its points is contained in $U$.\\
A natural hypothesis, if we are looking for space-times which are interesting from a physical point of view, is to require that the space-time contains no closed causal loop (physically   time-travel  is not allowed).  For example, this is the case for causal space-times.  However often this  is not enough to be physically useful;  for instance a curve should  not return arbitrary near to its starting point. This is precisely what happens in \em strongly causal \em space-times:   a space-time   is  strongly causal  if every point has a causally convex neighborhood.

\begin{defi}\label{def: GH diamante compatto}
A space-time $M$ is  \emph{globally hyperbolic} if it is  strongly causal and for every  $x, y \in M$ such that  $y\in J^-(x)$,  the intersection $J^+(y) \cap J^-(x)$ is compact.
\end{defi}

 This is the classical definition of globally hyperbolicity, however Sanchez has recently proved, in \cite{Sanchez}, that the hypothesis "$M$ is strongly causal" can be replaced by the hypothesis "$M$ is causal". \\
One of the main properties of globally hyperbolic space-times is the following \footnote{In fact, as explained in \cite{Penrose}, the definition of limit curve has been  fitted in order to have this convergence property for globally hyperbolic space-times.}:

\begin{lem}[ \cite{Beem} p. $78$ (corollary $3.32$) and \cite{ONeil} p. $405$ (Proposition $8$)]\label{lemma: curva limite causale}
Let $M$ be a globally hyperbolic space-time. Let $\{p_n\}_{n\in \N}$ and $\{q_n\}_{n\in\N}$ be two sequences of points in $M$ such that $p_n\rightarrow p$ and $q_n\leq p_n$ and let $\gamma_n$ be, for every $n$, a past causal curve from $p_n$ to $q_n$. Then:  
\begin{itemize}
\item if $\exists q\neq p$  such that $q_n\rightarrow q$ and $q\leq p $, then the $\gamma_n$ have a  limit curve going from $p$ to $q$ and which is past and causal,
\item if the sequence $\{q_n\}_{n\in\N}$ is unbounded, then there exists a past inextensible causal curve starting from $p$ which is a limit curve for $\gamma_n$.
\end{itemize}
\end{lem}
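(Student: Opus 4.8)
The plan is to handle both items with a single Arzel\`a--Ascoli compactness argument, using global hyperbolicity to confine the approximating curves to compact sets. First I would fix an auxiliary complete Riemannian metric $h$ on $M$, with distance $d_h$, and reparametrize each $\gamma_n$ by $h$-arclength, producing $1$-Lipschitz maps $\gamma_n\colon[0,L_n]\to M$ with $\gamma_n(0)=p_n$ and $\gamma_n(L_n)=q_n$; the local Lipschitz regularity of causal curves recalled above makes this reparametrization legitimate and the family equicontinuous. The second preliminary ingredient is a \emph{confinement principle}: in a strongly causal space-time, any causal curve contained in a fixed compact set $K$ has $h$-length bounded by a constant depending only on $K$. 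I would prove this by covering $K$ with finitely many causally convex neighborhoods and observing that causal convexity forces each curve to meet every such neighborhood along a single connected sub-arc of controlled length.

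For the first item I would use that $q\le p$ together with the openness of $I^{\pm}$ to choose points $p',q'$ with $q'<q$ and $p<p'$; then $q'\le q_n$ and $p_n\le p'$ for $n$ large, so every $\gamma_n$ lies in the diamond $K:=J^+(q')\cap J^-(p')$, which is compact by global hyperbolicity. The confinement principle bounds the lengths $L_n$, so after extraction $L_n\to L$ and Arzel\`a--Ascoli yields a uniformly convergent subsequence $\gamma_n\to\gamma\colon[0,L]\to M$, a $1$-Lipschitz curve from $p$ to $q$. To see that $\gamma$ is a past causal curve I would use that in a globally hyperbolic space-time the relation $\le$ is closed: for $s<t$ each $\gamma_n$ satisfies $\gamma_n(t)\le\gamma_n(s)$, hence in the limit $\gamma(t)\le\gamma(s)$; combined with strong causality (passing to causally convex neighborhoods), this is exactly the local condition in the $C^0$-definition of a past causal curve.

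For the second item, unboundedness of $\{q_n\}$ together with $d_h(p_n,q_n)\le L_n$ forces $L_n\to\infty$. I would run the same argument window by window: for each $T$ the curves $\gamma_n|_{[0,T]}$ stay, for $n$ large, in the fixed compact $h$-ball of radius $T+1$ about $p$, so Arzel\`a--Ascoli applies on $[0,T]$, and a diagonal extraction over $T\to\infty$ produces a limit curve $\gamma\colon[0,\infty)\to M$ starting at $p$ and causal by the same closedness-of-$\le$ argument. The delicate point is its past inextensibility: if $\gamma$ admitted a past endpoint it would be confined to a compact set, whence by the confinement principle its image would lie in some compact $K$; I would then contradict the escape of the endpoints $q_n=\gamma_n(L_n)$ to infinity, using a causally convex neighborhood of the putative endpoint to show that the $\gamma_n$ cannot simultaneously shadow $\gamma$ and still run out to $q_n$.

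I expect the genuinely hard parts to be the confinement principle and the inextensibility in the second item. The confinement principle is where strong causality and causal convexity do the real work; and inextensibility is subtle because uniform-on-compacts convergence only gives lower semicontinuity of length, which by itself does not prevent the limit curve from slowing down and converging to a point. By contrast, once $\le$ is known to be closed, upgrading the relation $\gamma(t)\le\gamma(s)$ to a bona fide $C^0$ causal curve via causally convex neighborhoods is routine.
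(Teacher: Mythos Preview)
The paper does not give its own proof of this lemma: it is stated with citations to Beem--Ehrlich--Easley and O'Neill and then used as a black box. Your sketch is essentially the standard argument one finds in those references --- an auxiliary complete Riemannian metric, arclength reparametrization to get equicontinuity, diamonds $J^+(q')\cap J^-(p')$ for compactness in the first item, a diagonal extraction in the second, and closedness of $\le$ in globally hyperbolic space-times to recover causality of the limit --- so there is nothing in the paper to compare against beyond noting that you have correctly reconstructed the cited proof.

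One remark on the inextensibility step, which you rightly flag as the delicate point: rather than chasing the escaping endpoints $q_n$, it is cleaner to feed the confinement principle back into the construction. If the limit $\gamma\colon[0,\infty)\to M$ had a past endpoint, its image together with that endpoint would be a compact set $K$; enlarge it slightly to a compact neighborhood $K'$ and let $C$ be the confinement bound for causal curves in $K'$. Uniform convergence on the compact interval $[0,C+1]$ (available since $L_n\to\infty$) then forces $\gamma_n([0,C+1])\subset K'$ for large $n$; but $\gamma_n|_{[0,C+1]}$ is a causal curve in $K'$ of $h$-length $C+1>C$, contradicting the bound. This sidesteps the lower-semicontinuity worry you mention, since it uses the exact $h$-length of the approximants rather than that of $\gamma$.
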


 An \em achronal \em (\em acausal \em) subset of a space-time is a subset which intersects every timelike (causal) curve in at most one point.

\begin{defi}\label{def: sviluppo di Cauchy}
\emph{Let $A$ be an achronal subset of  $M$. The \emph{future  Cauchy development} (resp. \emph{past}) of $A$, written $\mathscr{D}^+(A$) (resp. $\mathscr{D}^-(A)$), is the set of points  $x$ of $M$ in the chronological future (resp. past) of $A$ such that every past  (future) inextensible causal curve starting from $x$ intersects $A$.\\
The  \em Cauchy development \em of $A$ is the union  $$\mathscr{D}(A):=A \cup \mathscr{D}^+(A)\cup \mathscr{D}^-(A).$$}
\end{defi}

\begin{prop}[\cite{ONeil} p. 421]\label{prop: intD(A) GH}
Let $A$ be an achronal subset of $M$. If  $int(\mathscr{D}(A))$ is not empty, then it is globally hyperbolic.
\end{prop}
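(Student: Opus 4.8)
The plan is to check directly the two conditions of Definition \ref{def: GH diamante compatto} for the open set $W:=int(\mathscr{D}(A))$, viewed as a space-time in its own right with the metric and time-orientation induced from $M$: first that $W$ is strongly causal, and then that for all $x,y\in W$ with $y\in J^-(x)$ the diamond $J^+(y)\cap J^-(x)$ is compact. The engine of everything is a \emph{confinement property} that I would isolate as a preliminary step: every causal curve of $W$ that is inextensible in $W$ must meet $A$, and it meets it essentially once. The ``meets $A$'' part comes from Definition \ref{def: sviluppo di Cauchy} together with interiority: if $p\in W\cap\mathscr{D}^+(A)$ then a past-inextensible-in-$M$ causal curve from $p$ crosses $A$, and the fact that a whole neighborhood of $p$ sits inside $\mathscr{D}(A)$ is what lets one propagate this to curves inextensible merely in $W$ and to both time directions. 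The ``once'' part is forced by achronality: if a causal curve hit $A$ at two points, then either it is not a null pregeodesic between them, in which case it can be deformed into a timelike curve joining two points of $A$ (contradicting achronality), or it is a null pregeodesic, a case one controls by the same deformation argument applied to a slightly perturbed curve.

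With the confinement property in hand, I would prove strong causality of $W$ by contradiction. If strong causality failed at some $p\in W$, there would be a fixed neighborhood $V$ of $p$ and causal curves $\gamma_n$ starting and ending arbitrarily close to $p$ yet each leaving $V$. Extending the $\gamma_n$ to inextensible causal curves of $W$ and passing to a limit causal curve $\gamma$ through $p$, confinement forces $\gamma$ to cross $A$; but the ``almost closed'' behaviour of the $\gamma_n$ near $p$ produces, in the limit, either a closed causal curve through a point of $A$ or a causal curve meeting $A$ twice, both of which contradict achronality via the null-pregeodesic/timelike-deformation fact. This yields strong causality (indeed causality first, then its strong form).

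For the compactness of the diamonds I would use the limit curve lemma (Lemma \ref{lemma: curva limite causale}). Given $x,y\in W$ with $y\le x$, take a sequence $z_k\in J^+(y)\cap J^-(x)$ and, for each $k$, a causal curve from $y$ through $z_k$ to $x$. Extracting a limit causal curve from $y$ to $x$, the confinement property shows the whole family stays in the region controlled by $A$, so the $z_k$ subconverge to a point of the limit curve, which lies in the diamond; combined with the closedness of $J^\pm$ guaranteed by strong causality, this gives compactness. Together, strong causality and compact diamonds are exactly Definition \ref{def: GH diamante compatto}, so $W$ is globally hyperbolic.

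The step I expect to be the main obstacle is precisely the limit-curve input: Lemma \ref{lemma: curva limite causale} is stated for a globally hyperbolic ambient $M$, whereas here $M$ is an arbitrary space-time (not even assumed causal), so I cannot apply it naively — this would be circular. The way I would get around this is to run the limit-curve constructions locally, inside convex normal neighborhoods, where compact diamonds and convergence of causal curves hold in \emph{any} space-time, and to use the achronal set $A$ together with the endpoints $x,y$ to trap the relevant curves in a region covered by finitely many such neighborhoods. In other words, it is the confinement by $A$ that manufactures, within $W$, the very compactness that the global limit-curve lemma presupposes, and making this trapping precise is the technical heart of the argument.
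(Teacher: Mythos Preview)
The paper does not supply a proof of this proposition; it is merely cited from \cite{ONeil}, p.~421, so there is no in-paper argument to compare against. Your sketch follows essentially the standard route (which is also O'Neill's): verify strong causality and compact diamonds directly on $W=int(\mathscr{D}(A))$, using the achronal set $A$ to trap inextensible causal curves. The outline is sound, and you correctly flag the one genuinely delicate point --- that Lemma~\ref{lemma: curva limite causale} as stated presupposes global hyperbolicity of the ambient space, so the limit-curve step cannot be invoked as a black box and must instead be run locally in convex normal neighborhoods and glued via the confinement by $A$. One minor inaccuracy: an inextensible causal curve in $W$ need not meet $A$ at a \emph{single} point, since achronality permits a null geodesic segment to lie inside $A$; what you actually need, and what achronality together with Proposition~\ref{prop: geodesiche luce e causalit�} delivers, is that the intersection is connected, and this suffices for the remainder of your argument.
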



\begin{defi}\label{def: edgeless}
\emph{A locally achronal subset $A$ of $M$ is said to be \emph{edgeless} if
for every $x$ of $A$ there exists a neighborhood $U$ of $x$ such that:
\begin{itemize}
\item  $U \cap A$ is achronal relative to $U$: every timelike curve contained in $U$ intersects $U\cap A$ at most in one point,
\item  every causal curve contained in $U$, which starts  from  a point of $I_U^-(x)$
and ends in  $I_U^+(x)$, must intersect  $U \cap A$.
\end{itemize}}
\end{defi}

\noindent A subset $V$ of a space-time $M$ is  a \em past set \em(reps. \em future set \em)  if  $I^-(V)\subset V$ (resp. $I^+(V)\subset V$).

\begin{lem}[\cite{ONeil} p. 414 Corollary 26 and 27]\label{lemma: frontiera di insieme passato}
The (non empty) boundary of a past (future) set $P$ is a closed achronal  and edgeless topological hypersurface.
\end{lem}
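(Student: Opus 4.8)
The plan is to establish the four asserted properties of $\partial P$ in turn, the first three being quick consequences of a single structural observation about past sets, and the last (being a topological hypersurface) requiring a local graph argument. Throughout I use only the standard fact that the chronological future $I^+(A)$ and past $I^-(A)$ of any subset $A$ are open; no global hyperbolicity is needed. Closedness of $\partial P$ is automatic, since the topological boundary of any subset is closed.

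The core observation I would prove first is the following dichotomy for a boundary point $x \in \partial P$ of a past set $P$ (so $I^-(P) \subset P$): one has $I^-(x) \subset \mathrm{int}(P)$ and $I^+(x) \cap P = \emptyset$. For the first inclusion, let $q \in I^-(x)$; then $I^+(q)$ is an open neighbourhood of $x \in \overline{P}$, hence meets $P$ at some $z$, so $q \in I^-(z) \subset I^-(P) \subset P$, and since $I^-(z)$ is open we get $q \in \mathrm{int}(P)$. For the second, if some $q \in I^+(x)$ lay in $P$ then $x \in I^-(q) \subset I^-(P) \subset P$ with $I^-(q)$ open, forcing $x \in \mathrm{int}(P)$ and contradicting $x \in \partial P$. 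In particular $I^+(x)$ is an open set disjoint from $P$, so $I^+(x) \subset M \setminus \overline{P}$.

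Achronality and edgelessness then follow immediately. If $x,y \in \partial P$ were timelike related, say $x \in I^-(y)$, then applying the dichotomy to $y$ gives $x \in I^-(y) \subset \mathrm{int}(P)$, contradicting $x \in \partial P$; hence $\partial P$ is achronal, which is the first edgeless condition on any neighbourhood. For the second edgeless condition, fix $x \in \partial P$ and a neighbourhood $U$, and let $\gamma \subset U$ be a causal curve from a point $a \in I^-_U(x)$ to a point $b \in I^+_U(x)$. Then $a \in I^-(x) \subset \mathrm{int}(P)$ while $b \in I^+(x) \subset M \setminus \overline{P}$; since the image of $\gamma$ is connected and meets both of the disjoint open sets $\mathrm{int}(P)$ and $M \setminus \overline{P}$, it cannot be contained in their union, so it must meet the complement $\partial P = \overline{P} \setminus \mathrm{int}(P)$, and being contained in $U$ it meets $U \cap \partial P$. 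Thus $\partial P$ is edgeless.

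It remains to show that $\partial P$ is a topological hypersurface, which I expect to be the main technical point. Around a fixed $x \in \partial P$ I would choose coordinates $(x^0, \bar x) \in \R \times \R^n$ centred at $x$ in which $\partial_0$ is timelike and the metric agrees with the Minkowski metric at $x$; shrinking to a small cylinder $U \cong (-\delta,\delta) \times V$, the causal cones stay uniformly close to the standard one, so that each vertical segment $t \mapsto (t,\bar v)$ is timelike and two points of an achronal set cannot have $x^0$-coordinates differing by more than a fixed multiple of the distance between their $\bar x$-coordinates (otherwise the segment joining them would be timelike). Since $\partial P$ is achronal it meets each vertical segment at most once, so $\partial P \cap U$ is the graph of a function $f$ over its projection $W \subset V$, and the cone estimate makes $f$ Lipschitz, hence continuous. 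Finally, the edgeless property forces $W$ to be open: for $\bar v$ close to $\bar x$ the endpoints $(\mp\delta,\bar v)$ lie in $I^-_U(x)$ and $I^+_U(x)$ respectively, so by edgelessness the vertical segment through $\bar v$ meets $\partial P$, putting $\bar v \in W$. Therefore, on a possibly smaller cylinder, $\partial P$ is exactly the graph of a continuous function over an open subset of $\R^n$, hence a topological hypersurface. The only delicate points are the uniform cone control needed for the Lipschitz estimate and the bookkeeping identifying $\partial P \cap U$ with the graph, both handled by choosing $U$ small enough.
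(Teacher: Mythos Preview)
Your proof is correct. The paper itself does not give a proof of this lemma: it is stated with a citation to \cite{ONeil}, p.~414, Corollaries~26 and~27, and is used as a black box. What you have written is precisely the standard argument underlying those corollaries in O'Neill --- the dichotomy $I^-(x)\subset\mathrm{int}(P)$, $I^+(x)\subset M\setminus\overline{P}$ for $x\in\partial P$, followed by the local Lipschitz-graph description --- so there is nothing to compare. Your treatment of the hypersurface step is slightly more detailed than O'Neill's, who packages the graph argument separately (his Proposition~25) and then invokes it; otherwise the arguments coincide.
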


 In general every locally achronal edgeless subset of a space-time $M$ is an embedded topological hypersurface (see  Lemma 1.2.28  of \cite{Barbot} for  Minkowski space-time).




\begin{lem}\label{lemma: developpement vide}
Let $A$ be an achronal edgeless subset of a strongly causal space-time $M$. Then if $\mathscr{D}^+(A)=\emptyset$ ($\mathscr{D}^-(A)=\emptyset$), for every point $p\in A$ there exists a past (future) inextensible lightlike  geodesic
$c:[0,\infty[\to M$ starting from $p$  such that $c\cap A$ is a  past  (future) lightlike   geodesic   contained in  $A$ without past (future) limit point in $A$.
\end{lem}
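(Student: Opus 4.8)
The plan is to treat the case $\mathscr{D}^+(A)=\emptyset$; the case $\mathscr{D}^-(A)=\emptyset$ follows by reversing the time orientation. Fix $p\in A$ and choose a future-directed timelike curve issuing from $p$, giving points $x_n\to p$ with $x_n\in I^+(p)\subset I^+(A)$. Since $\mathscr{D}^+(A)=\emptyset$, no $x_n$ lies in $\mathscr{D}^+(A)$; as $x_n\in I^+(A)$, the definition of the Cauchy development forces, for each $n$, a past inextensible causal curve $\lambda_n$ starting at $x_n$ with $\lambda_n\cap A=\emptyset$. Now fix a precompact causally convex neighborhood $U$ of $p$ (strong causality), small enough that $A\cap U$ is a connected embedded hypersurface separating $U$ into two components; write $U=U^-\sqcup(A\cap U)\sqcup U^+$ with $I^\pm_U(p)\subset U^\pm$, which uses that $A$ is an embedded hypersurface and that $A$, being achronal through $p$, meets neither $I^+_U(p)$ nor $I^-_U(p)$. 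For $n$ large $x_n\in I^+_U(p)\subset U^+$; the portion of $\lambda_n$ from $x_n$ up to its first exit from $U$ is connected, avoids $A$, hence stays in the single component $U^+$.

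Next I would extract a limit. The initial portions $\lambda_n\cap U$ lie in the compact set $\overline{U}$ and are uniformly Lipschitz causal curves, so a subsequence converges to a causal curve $c$ issuing from $p$ and contained in $\overline{U^+}=U^+\cup(A\cap U)$; here only a local limit-curve argument on $\overline{U}$ is needed, so strong causality suffices and the globally hyperbolic hypothesis of Lemma \ref{lemma: curva limite causale} is not required. Since each $\lambda_n\cap U\subset U^+$ and $I^-_U(p)\subset U^-$ is open and disjoint from $\overline{U^+}$, the curve $c$ never enters $I^-_U(p)$; in particular $c(s)\notin I^-(p)$ for the initial segment. Consequently $c$ cannot be shortened: by Proposition \ref{prop: geodesiche luce e causalit�}, were $c|_{[0,s]}$ not a null pregeodesic there would be a timelike curve from $c(s)$ to $p$, i.e. $c(s)\in I^-(p)$, a contradiction; hence the initial segment of $c$ is a null pregeodesic. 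To see that it lies in $A$, suppose some $c(s)\in U^+$; since $U^+$ is open and $c(s)\in J^-_U(p)$ we have $I^-_U(c(s))\subset I^-_U(p)\subset U^-$, while $c(s)\in\overline{I^-(c(s))}$ then forces a point of $U^+$ to lie in $U^-$, which is absurd. Therefore $c(s)\in A\cap U$, and the initial segment of $c$ is a past lightlike geodesic contained in $A$.

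Finally I would extend $c$ to a maximal, hence past inextensible, lightlike geodesic $c:[0,\infty[\to M$ and set $T=\sup\{s:\ c([0,s])\subset A\}$. If $T=\infty$ then $c\cap A=c$; strong causality forbids imprisonment of an inextensible causal curve in a compact set, so $c$ has no limit point at all, in particular none in $A$. If $T<\infty$, put $q=c(T)$: when $q\notin A$ the only past accumulation of $c\cap A=c([0,T[)$ is $q$, so there is no past limit point in $A$, as required. It remains to exclude $q\in A$. In that case $c$ reaches $q$ tangent to $A$ along a null direction, and achronality of $A$ makes this null tangent direction unique at $q$; re-running the previous construction at $q$ produces a past null geodesic in $A$ issuing from $q$, which by uniqueness of the null direction coincides with the past continuation of $c$. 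This shows $c([0,T+\varepsilon[)\subset A$, contradicting the maximality of $T$. Hence $q\notin A$ and the lemma follows.

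The main obstacle I anticipate is this last step: controlling the global behaviour of the generator, namely proving that the lightlike geodesic $c\cap A$ has no past limit point inside $A$. It rests on two delicate points — that the limit curve genuinely stays in $A$ rather than merely in $\overline{U^+}$ (handled above by the past-cone argument) and that the null direction tangent to $A$ along a generator is unique, so that the local generators constructed at successive points patch into a single geodesic. Verifying this uniqueness and the patching, in the merely strongly causal setting and for an $A$ that need not be spacelike, is the technical heart of the proof.
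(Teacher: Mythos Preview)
The paper does not actually prove this lemma: it simply refers to Hawking--Ellis, Proposition~6.5.3 (structure of Cauchy horizons) and its corollary. The way the lemma follows from that reference is short: with the paper's convention $\mathscr{D}^+(A)\subset I^+(A)$, the hypothesis $\mathscr{D}^+(A)=\emptyset$ means that the full future domain of dependence (in the Hawking--Ellis sense) equals $A$, hence the future Cauchy horizon $H^+(A)$ equals $A$ itself. Proposition~6.5.3 then says that through every point of $H^+(A)=A$ there is a null geodesic generator that stays in $H^+(A)$ and is past inextensible in $M$ unless it reaches $\operatorname{edge}(A)$; since $A$ is edgeless, the generator is past inextensible and lies entirely in $A$, which is exactly the conclusion.

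Your direct argument is, in effect, a reconstruction of the proof of that horizon result in this special case, and the local part (extracting a limit curve from the escaping $\lambda_n$, showing it is a null pregeodesic, and showing it lies in $A\cap U$) is sound. Two points deserve tightening. First, in the case $T<\infty$, $q\notin A$, you do not exclude that $c$ re-enters $A$ past $q$, which would make $c\cap A$ disconnected and not ``a past lightlike geodesic''. In fact this case is vacuous: with the standard definition of edge (quantifying over $\overline{A}$, as in Hawking--Ellis), an edgeless achronal set is closed, so $q=\lim_{s\to T^-}c(s)\in\overline{A}=A$; you should note and use this, and then your contradiction in the case $q\in A$ forces $T=\infty$, i.e.\ $c\subset A$. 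Second, your ``uniqueness of the null tangent direction to $A$'' is phrased as if $A$ had a tangent space, but $A$ is only a Lipschitz hypersurface. The correct argument is purely causal: if the generator $c'$ produced at $q$ had a direction different from that of $c$ at $q$, the concatenation $c|_{[0,T]}\ast c'$ would be a causal curve that is not a null pregeodesic, and Proposition~\ref{prop: geodesiche luce e causalit�} would yield a timelike curve between two points of $A$, contradicting achronality. With these two fixes your proof goes through and matches the content of the cited reference.
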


\begin{proof}
This lemma  results from the theory of the \em Cauchy horizon \em for the Cauchy development of achronal subsets. 
A proof can be found in \cite{Hawking} p. 203 Proposition 6.5.3 and its corollary.
\end{proof}

\begin{defi}\label{hypersurface de Cauchy}
 A \emph{Cauchy hypersurface} of $M$ is a closed achronal edgeless set $S\subset M$  such that $\mathscr{D}(S)=M$.\\
 A \emph{Cauchy time-function}  is a continuous map $t: M\rightarrow \mathbb{R}$ such that, for every inextensible future causal curve $c$ of $M$, $t\circ c$ is increasing and onto.  In particular every level set of $t$ is a Cauchy hypersurface for $M$. 
\end{defi}

 By Proposition \ref{prop: intD(A) GH}, if $M$ has a Cauchy hypersurface then it is globally hyperbolic. The converse is a consequence of the following more general result, called Geroch's Theorem.

\begin{teo}\label{teo: Geroch}
A space-time $M$ is globally hyperbolic if and only if it admits a $C^\infty$ Cauchy time-function.
\end{teo}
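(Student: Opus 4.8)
The plan is to prove the two implications separately, the nontrivial content being the construction of a Cauchy time-function on a globally hyperbolic space-time. The easy direction is immediate from the material already assembled: if $t\colon M\to\R$ is a Cauchy time-function, then any level set $S:=t^{-1}(0)$ is by definition a Cauchy hypersurface, so $\mathscr{D}(S)=M$; since $M$ is open this gives $int(\mathscr{D}(S))=M$, and Proposition \ref{prop: intD(A) GH} then yields that $M$ is globally hyperbolic.

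For the converse, assume $M$ globally hyperbolic and follow Geroch's volume argument. First I would fix a smooth volume form on $M$ and rescale it by a positive function so that the associated Borel measure $\mu$ is finite, $\mu(M)<\infty$; this is possible since $M$ is second countable. Setting
\[
 t^-(p):=\mu\big(I^-(p)\big),\qquad t^+(p):=\mu\big(I^+(p)\big),
\]
the candidate time-function is $t:=\log\big(t^-/t^+\big)$. The three properties to verify are strict monotonicity along causal curves, continuity, and surjectivity along inextensible causal curves.

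Strict monotonicity is elementary: if $p\leq q$ with $p\neq q$, then causality forbids $q\in J^-(p)=\overline{I^-(p)}$, so a small neighbourhood of $q$ meets $I^-(q)$ but not $I^-(p)$, whence $t^-(p)<t^-(q)$; symmetrically $t^+$ is strictly decreasing, so $t$ is strictly increasing along every future causal curve. The decisive use of global hyperbolicity is in the remaining two points. For continuity one must show $p\mapsto\mu(I^\pm(p))$ is continuous; this rests on the limit curve Lemma \ref{lemma: curva limite causale}, which forces $\limsup_{q\to p}I^-(q)\subseteq\overline{I^-(p)}$, together with the fact that by Lemma \ref{lemma: frontiera di insieme passato} the boundary $\partial I^-(p)$ is a topological hypersurface and hence $\mu$-negligible. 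For surjectivity, along a future inextensible causal curve $c$ one argues that $t^-(c(s))\to 0$ as $s\to-\infty$ and $t^+(c(s))\to 0$ as $s\to+\infty$: otherwise a point would lie in the past of every $c(s)$, producing an inextensible causal curve confined to a compact causal diamond, contradicting the unboundedness alternative of Lemma \ref{lemma: curva limite causale}. Hence $t\circ c$ runs over all of $\R$, and every level set of $t$ is a Cauchy hypersurface.

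At this stage $t$ is only a \emph{continuous} Cauchy time-function, and upgrading it to a $C^\infty$ one with smooth spacelike level sets is, I expect, the real obstacle. I would not reprove this by hand but invoke the regularisation technique of Bernal and Sánchez: one smooths $t$ locally while preserving monotonicity, and patches the pieces with a partition of unity so that the gradient of the resulting function $\tau$ is everywhere past-directed timelike, which both makes $\tau$ smooth and keeps its level sets spacelike; a final control of the behaviour of $\tau$ along inextensible causal curves ensures it remains Cauchy. The delicate point is maintaining the timelike character of $d\tau$ globally through the patching, and this is precisely what the cited construction is designed to achieve.
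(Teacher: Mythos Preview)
The paper does not give its own proof of this theorem: it is stated as a classical result and immediately attributed to Geroch \cite{GerochTimeFunction}, with a remark that the regularity of the time-function required later corrections and a further reference to \cite{SachsWuTimeFunction1977}. So there is no in-paper argument to compare against.

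That said, your sketch is exactly the classical route the paper is citing. The easy direction via Proposition~\ref{prop: intD(A) GH} is correct. For the hard direction you reproduce Geroch's volume-function construction $t=\log(t^-/t^+)$ with a finite measure, and you correctly identify that global hyperbolicity is what makes $t^\pm$ continuous and forces $t^\pm\to 0$ along inextensible causal curves. Two small points where your wording is a bit loose: for strict monotonicity you implicitly use that $M$ is causal (so $p\leq q$, $p\neq q$ forbids $q\in J^-(p)$), and for continuity you need both semi-continuities of $p\mapsto\mu(I^\pm(p))$, not just the upper one you mention. Finally, your decision not to redo the $C^\infty$ upgrade by hand but to invoke the Bernal--S\'anchez smoothing is appropriate and matches the paper's own comment that the regularity step is the delicate one historically; this is precisely the content the paper defers to the literature.
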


 This result, originally proved in  \cite{GerochTimeFunction}, has been rewritten several times, in order to correct some mistakes in the proof of the regularity of the Cauchy time function. Another reference for the proof of the existence of continuous Cauchy time function is  \cite[p. 1155]{SachsWuTimeFunction1977}.

\begin{cor}\label{cor: Geroch produit}
Every globally hyperbolic space-time is homeomorphic to a product $S\times \mathbb{R}$. Furthemore, for every $t\in \R$, the projection on to the  factor  $\mathbb{R}$  is a Cauchy time-function.
\end{cor}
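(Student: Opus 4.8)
The plan is to feed the smooth Cauchy time-function produced by Geroch's Theorem (Theorem~\ref{teo: Geroch}) into a flow argument. Let $t\colon M\to\mathbb{R}$ be such a function; by Definition~\ref{hypersurface de Cauchy} it is increasing and onto along every inextensible future causal curve, and each level set $S_c:=t^{-1}(c)$ is a Cauchy hypersurface. I would fix the slice $S:=S_0$ and look for a complete, future-directed timelike vector field $X$ transverse to the level sets, normalized so that $t$ grows at unit speed along its flow; the flow of $X$ will then identify $M$ with $S\times\mathbb{R}$.

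To build $X$, consider the Lorentzian gradient $\nabla t$. Since $dt$ is positive on the whole future causal cone (monotonicity of $t$ along causal curves), $\nabla t$ is timelike, so I may set $X:=\nabla t / g(\nabla t,\nabla t)$; this is a smooth future-directed timelike field with $dt(X)=1$. (If one only knows that $t$ is a time-function without a timelike gradient, one can instead pick any smooth future-directed timelike field, available by time-orientability, and reparametrize it so that $X(t)=1$.) Writing $\phi_s$ for the flow of $X$, one gets the key identity $t(\phi_s(p))=t(p)+s$ for every $p$ and every $s$ in the maximal interval of definition.

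The main obstacle is to show that the flow $\phi_s$ is \emph{complete}, i.e.\ that every maximal integral curve is defined for all $s\in\mathbb{R}$; this is exactly where global hyperbolicity must be used. Suppose the integral curve $c$ through $p$ is defined only on $(a,b)$ with $b<\infty$. There are two cases. If $\phi_s(p)$ converges to a point of $M$ as $s\to b^-$, then standard ODE theory extends the solution past $b$, contradicting maximality. Otherwise $c$ leaves every compact set as $s\to b^-$, so its future end is inextensible; then by the onto property of the Cauchy time-function $t\circ c$ must attain arbitrarily large values, which is impossible since $t(\phi_s(p))=t(p)+s<t(p)+b$ stays bounded. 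Hence $b=+\infty$, and symmetrically $a=-\infty$. (Alternatively, one can rule out the finite-$b$ case directly by confining the curve: for $s\in[0,b)$ the point $\phi_s(p)$ lies in $J^+(p)$ and has $t\le T_1:=t(p)+b$, hence lies in $J^+(p)\cap J^-(S_{T_1})$, which is compact by the standard strengthening of Definition~\ref{def: GH diamante compatto} to a Cauchy hypersurface; a solution confined to a compact set cannot reach a finite maximal time.)

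Finally I would assemble the homeomorphism. Define $\Phi\colon S\times\mathbb{R}\to M$ by $\Phi(x,s)=\phi_s(x)$. It is injective, since $\Phi(x,s)=\Phi(x',s')$ forces $s=t(\Phi(x,s))=t(\Phi(x',s'))=s'$ and then $x=x'$ by injectivity of $\phi_s$; it is surjective, since for $q\in M$ the integral curve through $q$ meets $S$ at the unique parameter $s=-t(q)$, giving $q=\Phi(\phi_{-t(q)}(q),t(q))$. The map $\Phi$ is smooth and its inverse $q\mapsto(\phi_{-t(q)}(q),t(q))$ is continuous, so $\Phi$ is a homeomorphism (in fact a diffeomorphism). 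Under this identification the projection $S\times\mathbb{R}\to\mathbb{R}$ coincides with $t$, hence is a Cauchy time-function whose level slices $S\times\{c\}$ are the Cauchy hypersurfaces $S_c$, which is the ``furthermore'' part of the statement.
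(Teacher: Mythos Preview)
The paper does not supply a proof of this corollary; it is simply stated as a consequence of Geroch's Theorem~\ref{teo: Geroch}, which is itself only quoted from the literature. Your flow-line argument is the standard one and is correct.

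Two minor remarks. In the paper's signature convention $\nabla t$ comes out \emph{past}-directed (since $g(\nabla t,v)=dt(v)>0$ for every future causal $v$, whereas two future timelike vectors have negative inner product); but because $g(\nabla t,\nabla t)<0$, your $X=\nabla t/g(\nabla t,\nabla t)$ is indeed future-directed with $dt(X)=1$, so the construction goes through as written. And when you invoke the surjectivity of $t\circ c$ to rule out $b<\infty$, Definition~\ref{hypersurface de Cauchy} literally speaks of curves that are inextensible in \emph{both} directions; you are implicitly first extending the integral curve in the past to a fully inextensible causal curve, which is always possible and harmless.
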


In general it is not  true that a  closed achronal edgeless subset  $A$ of a globally hyperbolic space-time  is a Cauchy  hypersurface\footnote{ The set $A=\{x\in \R^{1,n} \ :\ \Vert x\Vert_{1,n}=-1\}$ is a closed achronal edgeless subset of $\R^{1,n}$. But it is not a Cauchy hypersurface, because no lightlike straight lines going through the origin intersect $A$. }. However this is true under a compactness hypothesis:

\begin{prop}\label{prop: ipersuperficie de Cauchy compatta}
Let $S$ be an achronal edgeless  compact subset of a
globally hyperbolic space-time  $M$. Then $S$ is a Cauchy hypersurface of $M$.
\end{prop}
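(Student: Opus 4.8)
The plan is to realize $S$ as a graph over a smooth Cauchy hypersurface and then run an intermediate-value argument along causal curves, with compactness of $S$ entering precisely at the point where one needs the graph to be defined over the \emph{whole} reference hypersurface.

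First I would invoke Geroch's Theorem \ref{teo: Geroch} to fix a smooth Cauchy time function $t:M\to\mathbb{R}$ with timelike gradient. Its gradient flow lines are inextensible timelike curves foliating $M$, and together with Corollary \ref{cor: Geroch produit} they furnish a splitting $M\cong \Sigma\times\mathbb{R}$ in which $\Sigma=t^{-1}(0)$ is a smooth Cauchy hypersurface, the second coordinate is $t$ itself, and each fibre $\{x\}\times\mathbb{R}$ is an inextensible timelike curve. Since $M$ is connected and $\mathbb{R}$ is connected, $\Sigma$ is connected. Write $\pi:M\to\Sigma$ for the projection onto the first factor.

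Next I would analyse $\pi|_S:S\to\Sigma$. Because $S$ is achronal and each fibre $\{x\}\times\mathbb{R}$ is timelike, $S$ meets each fibre at most once, so $\pi|_S$ is injective; it is continuous, and as $S$ is compact it is a homeomorphism onto the compact set $\pi(S)\subset\Sigma$. By the remark following Lemma \ref{lemma: frontiera di insieme passato}, the achronal edgeless set $S$ is an embedded topological $n$-manifold, and $\Sigma$ is likewise a topological $n$-manifold; hence by invariance of domain $\pi|_S$ is an open map and $\pi(S)$ is open in $\Sigma$. Being simultaneously open, compact (hence closed), and nonempty in the connected manifold $\Sigma$, the image $\pi(S)$ must equal $\Sigma$. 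In particular $\Sigma$ is compact and $S$ is the graph $\{(x,f(x)):x\in\Sigma\}$ of the continuous function $f:=t\circ(\pi|_S)^{-1}:\Sigma\to\mathbb{R}$. Finally, to conclude that $S$ is a Cauchy hypersurface it suffices to check that every inextensible causal curve meets it. Given such a curve $\gamma(s)=(\sigma(s),t(\gamma(s)))$, the continuous function $h(s):=t(\gamma(s))-f(\sigma(s))$ tends to $+\infty$ at the future end and to $-\infty$ at the past end, since $t\circ\gamma$ is onto $\mathbb{R}$ (as $t$ is a Cauchy time function) while $f\circ\sigma$ stays bounded ($f$ being continuous on the compact $\Sigma$); so $h$ vanishes somewhere and $\gamma$ meets $S$. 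This gives $\mathscr{D}(S)=M$.

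The main obstacle is the step ``$\pi(S)$ open and closed, hence all of $\Sigma$'': this is exactly where compactness of $S$ is indispensable, forcing $\pi(S)$ to be closed and forcing $\Sigma$ itself to be compact so that $f$ is bounded in the last step, and it is where one must carefully invoke invariance of domain and the connectedness of $\Sigma$. As a consistency check that compactness is doing genuine work, I would note that Lemma \ref{lemma: developpement vide} already forbids $\mathscr{D}^\pm(S)$ from being empty: a past-inextensible lightlike geodesic contained in the compact set $S$ would necessarily have a past limit point in $S$, contradicting the conclusion of that lemma.
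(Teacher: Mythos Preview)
Your argument is correct but follows a genuinely different route from the paper's. The paper first proves $\partial I^{\pm}(S)=S$ by a limit-curve argument (Lemma \ref{lemma: curva limite causale}) combined with the edgeless hypothesis, obtaining the decomposition $M=I^+(S)\sqcup S\sqcup I^-(S)$; then compactness of $S$ is used to produce a level $\Sigma=\tau^{-1}(b)$ of a Cauchy time function lying entirely in $I^+(S)$, and any inextensible causal curve, being forced to meet $\Sigma$, must cross $\partial I^-(S)=S$ on its way. Your approach instead projects $S$ onto a reference Cauchy slice via the Geroch splitting, invokes invariance of domain to see that $\pi(S)$ is open, and uses compactness to conclude $\pi(S)=\Sigma$; the intermediate-value step on $t\circ\gamma - f\circ\sigma$ then finishes. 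Your method is more topological and sidesteps the limit-curve lemma entirely; it also yields the pleasant byproduct that the reference slice $\Sigma$ itself is compact. The paper's method is more in the spirit of pure causality theory and avoids invariance of domain, but needs the (standard) convergence machinery of Lemma \ref{lemma: curva limite causale}. Your final remark on Lemma \ref{lemma: developpement vide} is a nice sanity check but is not needed for the proof.
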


\begin{proof}
By hypothesis $S$ is  closed achronal edgeless so we just have to verify that $S$ intersects every inextensible causal curve of $M$.\\
We start by proving that $\partial I^-(S)=\partial I^+(S)=S$. Assume by contradiction that there exists $p\in \partial I^-(S)\setminus S$. Let $\{p_n\}_{n\in \N}$ a sequence in $I^-(S)$ converging to $p$. For every $n$ there exists a point $z_n$ in $S$ such that $p_n\in I^-(z_n)$. Since $S$ is compact, up to a subsequence, we can assume that the sequence  $\{z_n\}_{n\in \N}$ converges to a point $z$. Then Lemma \ref{lemma: curva limite causale} applies: there exists a past causal curve $\gamma$ from $z$ to $p$. Since $p\in \partial I^-(S)\setminus S$; the curve $\gamma$ has to be a lightlike geodesic (according to Proposition \ref{prop: geodesiche luce e causalità}). This implies that every point  $p'\neq p$ which is in $\gamma$ is also in $\partial I^-(S)\setminus S$.\\
 Let $c$ be a timelike curve from a point  $p'$ in $\gamma$ to a point $z'\in I^+(z)$. Since $S$ is edgeless,  if $p'$ and $z'$ are sufficiently near  $z$, the curve $c$ has to meet $S$ in a point $q$. By Proposition \ref{prop: geodesiche luce e causalità} we have  $p\in I^-(q)$, and then $p\in I^-(S)$, which is a contradiction.\\
A similar argument can be used to prove $\partial I^+(S)=S$, so we have the disjoint union 
$M=I^+(S)\sqcup S\sqcup I^-(S)$. \\
Let $\tau:M\rightarrow \R$ be a  Cauchy time function. Since $S$ is compact $\tau\vert _S$ has a maximum $A$. We define $\Sigma:=\tau^{-1}(b)$ where $b>A$: then $\Sigma$ is a Cauchy hypersurface contained in $I^+(S)$.\\
Let $p$ be a point in $I^-(S)$ and let $\alpha$ be a future inextensible causal curve starting from $p$. The curve $\alpha$ intersects $\Sigma$. Since $\Sigma$ is strictly contained in $I^+(S)$, $\alpha$ must intersect the boundary of $I^-(S)$  before intersecting $\Sigma$. Since the boundary of  $I^+(S)$ is $S$, $\alpha$ intersects $S$.\\
In the same way we can show that every past inextensible causal curve starting from of $I^+(S)$ must intersect  $S$.
This shows that  $\mathcal{D}(S)=M$ and thus $S$ is a Cauchy hypersurface.
\end{proof}

\subsection{Conformally flat space-times}\label{sezione: Ein}
\paragraph*{Conformal maps and causality.}
There is a natural question to ask: when do two different Lorentzian metrics  define the same causal structure on a manifold? A sufficient condition is that the two metrics be in the same conformal class. Indeed, if in each tangent space we multiply  the  metric by a positive constant, the causal type of the tangent vectors does not change, and so the causal structure of the entire manifold is preserved\footnote{For intellectual satisfaction we should also mention that this condition is also necessary for strongly causal space-times. More precisely: in \cite{Hawking_applicazioniConformi} S.W. Hawking, A.R. King and P.J.W. McCarthy have shown that if $g$ and $g'$ define the same causal structure over $M$ and if this structure is strongly causal, then $g$ and $g'$ are conformally equivalent. The hard part of their proof is to show that every homeomorphism which preserves the causal structure of a strongly causal space-time is differentiable. Starting from that it is easy to show that the identity is a conformal map}. 
Then every result about the causal structure of a given space-time is true for all the Lorentzian metrics in the same conformal class. However, in general, it is clear that two different  metrics in the same conformal class have different  geodesics. This is because the  Levi-Civita connexion is not preserved by conformal changes. The formula which gives the new Levi-Civita connexion, after a conformal change of metric, can be found in \cite{Beem}, chapter $9$. By this formula it is not hard to prove:

\begin{lem}[\cite{Beem}, Proposition $9.17$]\label{g\'eod\'esiques lumieres}
Let $(M,g)$ a space-time, and $f: (M,g)\rightarrow (M,g)$ a conformal map. Then the image  by $f$  of every lightlike geodesic of $M$, up to parametrization, is a lightlike geodesic  of $N$
\end{lem}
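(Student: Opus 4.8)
The plan is to reduce the statement to an intrinsic comparison between two metrics in the same conformal class and then to read off the conclusion from the transformation formula for the Levi-Civita connection cited just above. Recall first that a conformal map $f\colon(M,g)\to(N,h)$ is, by definition, an isometry from $(M,f^*h)$ onto $(N,h)$, and that $f^*h=e^{2\varphi}g$ for some smooth function $\varphi\colon M\to\R$. Since isometries carry geodesics to geodesics bijectively, and since the lightlike character of a tangent vector depends only on the conformal class (multiplying $g$ by a positive factor does not change the sign of $g(v,v)$, and $f$ being conformal sends null vectors to null vectors), it suffices to prove the following local statement: if $\bar g=e^{2\varphi}g$, then every lightlike $g$-geodesic is, up to reparametrization, a lightlike $\bar g$-geodesic.

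To prove this I would apply the conformal transformation formula for the Levi-Civita connection (from \cite{Beem}, ch.~$9$): denoting by $\nabla$ and $\bar\nabla$ the connections of $g$ and $\bar g$, one has
\begin{equation*}
\bar\nabla_XY=\nabla_XY+(X\varphi)\,Y+(Y\varphi)\,X-g(X,Y)\,\operatorname{grad}_g\varphi .
\end{equation*}
Let $\gamma$ be a lightlike $g$-geodesic, so that $\nabla_{\gamma'}\gamma'=0$ and $g(\gamma',\gamma')=0$. Setting $X=Y=\gamma'$, the first term vanishes because $\gamma$ is a $g$-geodesic, and the term $g(\gamma',\gamma')\,\operatorname{grad}_g\varphi$ vanishes because $\gamma$ is lightlike; what remains is
\begin{equation*}
\bar\nabla_{\gamma'}\gamma'=2(\gamma'\varphi)\,\gamma'=2\,\frac{d(\varphi\circ\gamma)}{dt}\,\gamma' ,
\end{equation*}
which is proportional to $\gamma'$. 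Thus $\gamma$ is a $\bar g$-pregeodesic.

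It then remains to reparametrize. A curve whose $\bar g$-acceleration is a pointwise multiple of its velocity becomes a genuine geodesic after a change of parameter $t\mapsto s(t)$ solving the scalar linear ODE $s''=2\,(\varphi\circ\gamma)'\,s'$; this ODE integrates to $s'(t)=C\,e^{2(\varphi\circ\gamma)(t)}>0$, so it furnishes a legitimate increasing reparametrization on the whole interval, and with the new parameter the reparametrized curve $\sigma$ satisfies $\bar\nabla_{\sigma'}\sigma'=0$. Its image under $f$ is therefore a lightlike $h$-geodesic, as claimed. Applying the same reasoning to $f^{-1}$ (equivalently, exchanging the symmetric roles of $g$ and $\bar g$) shows the correspondence goes both ways, justifying "up to parametrization".

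I do not expect a genuine obstacle here: the entire content is the single cancellation produced by the null condition $g(\gamma',\gamma')=0$. It is worth stressing that this cancellation is exactly what distinguishes lightlike geodesics from timelike or spacelike ones: for a non-null geodesic the term $g(\gamma',\gamma')\,\operatorname{grad}_g\varphi$ survives and the curve is in general no longer a $\bar g$-geodesic, which is precisely why the lemma is special to the lightlike case. The only points requiring mild care are verifying the global solvability of the reparametrizing ODE (done above) and that $f$, being conformal and hence a local diffeomorphism, does send the pregeodesic to a pregeodesic of $(N,h)$.
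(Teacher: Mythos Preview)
Your argument is correct and follows precisely the route the paper indicates: the paper does not give its own proof but simply remarks that the result is ``not hard to prove'' from the conformal transformation formula for the Levi-Civita connection in \cite{Beem}, which is exactly the computation you carry out. Your additional care with the reparametrizing ODE and the remark on why the null condition is essential go slightly beyond what the paper sketches, but the approach is the same.
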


\noindent A different and nice proof of Lemma \ref{g\'eod\'esiques lumieres}, using the fact that lightlike geodesics are the solutions in the zero level of a Hamiltonian system, can be found in \cite{Frances} (chapter 1, p. 14).

\paragraph*{Einstein space-time.}
It is well known that the conformal sphere can be identified to the boundary of the hyperbolic space of higher dimension. This construction has a Lorentzian analog: the Einstein space-time can be identified to the conformal boundary of the anti-de Sitter space-time.\\
Let  $\R^{2,n+1}$ be the vector space $\R^{n+3}$  with the canonical quadratic form of signature $(2,n+1)$ and let $C$ be the cone of isotropic vectors. Let $S(\R^{2,n+1})$  the quotient of $\R^{2,n+1}$ by positive rescaling, and let   $\pi: \R^{2,n+1}\to S(\R^{2,n+1})$ be the associated projection. Since the space-time $Ad\Sf_{1,n}$ is defined as the set of vectors of $\R^{2,n+1}$ which have norm $-1$, the map $\pi$ is injective on this set. The space-time $Ad\Sf_{1,n}$ is then identified to its image, and we call it the projective model of $Ad\Sf_{1,n}$. The boundary of the image of $Ad\Sf_{1,n}$ by $\pi$ is the image of $C$.  \\
 For every $v\in \R^{2,n+1}$ the kernel of $d_v(\pi\vert_C)$ is exactly the degenerate direction  of the ambient quadratic form of  $\R^{2,n+1} $ restricted to $T_pC$.  Given two sections $\phi, \phi': \pi(C)\to C$, there is always a positive function $f$ defined on $\pi(C)$ such that for every $x\in \pi(C)$, we have $\phi(x)=f(x)\phi'(x)$. The two metrics defined on $\pi(C)$ by the pull backs  by $\phi$ and $\phi'$  of the quadratic form of $\R^{2,n+1}$, are then conformally equivalent, with conformal factor $f^2$.  In other words, the quadratic form of $\R^{2,n+1}$ naturally defines a conformal class of Lorentzian metrics $[g]$ over $\pi(C)$.

\begin{defi}
 The \em Einstein space-time \em  of dimension $n+1$, denoted $Ein_{1,n}$, is  the topological space $\pi(C)$ endowed with the conformal class  of Lorentzian metrics $[g]$.
 \end{defi}
 It turns out that the  conformal class $[g]$ of $Ein_{1,n}$ is conformally flat:  that is, every point $p$ of  $Ein_{1,n}$ has a neighborhood $U$ such that  $[g] \vert_U$  contains a flat metric.  This fact is not evident \em a priori, \em  it comes from the fact that  the model flat space-time, the Minkowski space-time $\R^{1,n}$,  admit conformal embeddings  in $Ein_{1,n}$. Since the action of the group  $O(2,n+1)$ on  $Ein_{1,n}$ is transitive, every point of the space-times  $Ein_{1,n}$,  has a neighborhood  conformally equivalent to $\mathbb{R}^{1,n}$. \\ 
Moreover,  the other two models of constant curvature space-times, $d\Sf_{1,n}$  and $ Ad\Sf_{1,n}$, also conformally embeds  in $Ein_{1,n}$, then we obtain that every constant curvature space-times is conformally flat  (see \cite{BarbotQuentin} Proposition 2.3 for the $Ad\Sf$ case, see \cite{PenroseInfinity} and section 2.3 of \cite{FrancesEssential} for the others). This  situation is similar to the Riemannian case:  the euclidean and the hyperbolic space conformally embed into the Riemannian sphere, where the orthogonal group acts transitively.\\
 Is not hard to see that the space-time $Ein_{1,n}$ can be identified with the product $\mathbb{S}^{n}\times \mathbb{S}^1$, equipped with the conformal class of the metric $d\sigma^2-d\theta^2$, where $d\sigma^2$ and $d\theta^2$ are the canonical metrics over  $\mathbb{S}^{n}$ and $\mathbb{S}^1$. This can be seen by looking at the intersection between $C$ and the sphere of radius $2$ for the canonical euclidean metric of $\R^{n+3}$. By this identification $Ein_{1,n}$ is clearly an orientable and time orientable manifold and thus a space-time.\\
The orthogonal group $O(2,n+1)$ acts transitively and faithfully on $C$ and this action preserves straight lines. Hence $O(2,n+1)$ acts transitively and faithfully over $Ein_{1,n}$ and preserves its conformal class of metrics.\\

\begin{lem}\label{lemma: curve causali Ein}
Every causal (timelike) curve $c$ of $Ein_{1,n}$ can be parameterized as $(x(t),e^{i2\pi t})$, where $x(t)$ is a (strictly) $1$-Lipschitz map from an interval of $\R$ into $\Sf^n$.
The lightlike geodesics of  $Ein_{1,n}$ are the causal curves $c$ such that, in the previous parametrization,   $x(t)$ is a geodesic of $\Sf^n$ parameterized by its arc length.
\end{lem}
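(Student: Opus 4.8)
The plan is to work in the explicit conformal model $Ein_{1,n}\cong\Sf^n\times\Sf^1$ and to fix the distinguished representative $g_0=d\sigma^2-d\theta^2$ of its conformal class, the product of the round metric on $\Sf^n$ and the negative of the round metric on $\Sf^1$. Both statements to be proved—being causal (resp. timelike), and being a lightlike geodesic—are conformally invariant, so it suffices to establish them for the single metric $g_0$, which has the advantage of being a product and hence having a transparent geodesic flow and a transparent causal cone.

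First I would analyse causal curves. Writing a $C^1$ curve as $c(t)=(x(t),z(t))$ with $z(t)\in\Sf^1$, the causal condition $g_0(\dot c,\dot c)\leq 0$ reads $|\dot x|_\sigma^2\leq|\dot z|^2$, with strict inequality in the timelike case. Since $\partial_\theta$ is everywhere timelike, any nonzero causal vector has $\dot z\neq 0$; fixing the time orientation so that the future cone is the one with $\dot\theta>0$, every future causal curve has strictly increasing $\theta$-coordinate (this is cleanest on the universal cover $\Sf^n\times\R$, where $\theta$ is a genuine time function). Thus $z$ is a monotone reparameterization onto its image, and $c$ may be reparameterized so that $z(t)=e^{i2\pi t}$; after this normalization $|\dot z|$ is constant, and the causal (resp. timelike) inequality becomes $|\dot x|_\sigma\leq 1$ (resp. $<1$) in the normalization of the statement, i.e. $x$ is $1$-Lipschitz (resp. strictly so). The general case of merely continuous causal curves is then handled using the local Lipschitz regularity of causal curves recalled in the preliminaries: such a curve is differentiable almost everywhere by Rademacher, the pointwise inequality holds a.e., and integrating it yields the Lipschitz bound on $x$, while monotonicity of $\theta$ survives in the limit and makes the reparameterization by $e^{i2\pi t}$ admissible.

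For the lightlike geodesics I would exploit that $g_0$ is a product Lorentzian metric: its geodesics are exactly the products $(x(t),z(t))$ of a geodesic $x(t)$ of $(\Sf^n,d\sigma^2)$ and a geodesic $z(t)$ of $(\Sf^1,-d\theta^2)$, each affinely and hence constant-speed parameterized. Such a product geodesic is lightlike precisely when the two constant speeds agree, $|\dot x|_\sigma=|\dot z|$; reparameterizing by $z(t)=e^{i2\pi t}$ as above, this says exactly that $x$ is a great-circle arc of $\Sf^n$ traversed at unit speed, i.e. parameterized by arc length. Finally, lightlike geodesics form a conformally invariant family, so by Lemma~\ref{g\'eod\'esiques lumieres} the unparameterized lightlike geodesics of the conformal class $[g_0]$ coincide with those of the representative $g_0$; this identifies them with the curves described in the statement.

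I expect the main obstacle to be the careful treatment of the reparameterization and the Lipschitz estimate for genuinely $C^0$ causal curves, rather than the computation for smooth ones: one must check that the monotone $\theta$-reparameterization remains admissible, that the almost-everywhere differential inequality integrates to the claimed Lipschitz bound, and in particular that strictness is preserved in the timelike case. A secondary point to get right is the normalization constant hidden in the identification $Ein_{1,n}\cong\Sf^n\times\Sf^1$ together with the parameter $e^{i2\pi t}$, so that the Lipschitz constant and the arc-length normalization both come out to exactly $1$.
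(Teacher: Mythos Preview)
Your approach is essentially the paper's: reparameterize by the $\Sf^1$-coordinate using that $\dot\theta\neq 0$ on causal vectors, read off the (strict) $1$-Lipschitz bound from the causal inequality, and identify lightlike geodesics via the product structure of $g_0$. The one substantive difference is precisely at the point you flag as the main obstacle: for merely $C^0$ causal curves the paper does not invoke Rademacher and integrate an a.e.\ inequality, but instead uses the very definition of a $C^0$ causal curve---between any two sufficiently close points $c(s)$, $c(s')$ there is a piecewise $C^1$ causal curve---and applies the already-established $C^1$ case to that auxiliary curve to obtain $d_0(w(s),w(s'))\leq |p(s)-p(s')|$ (with strict inequality in the timelike case) directly. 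This sidesteps the regularity and strictness worries you anticipate, and is worth adopting.
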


\begin{proof}
Let $s\in I\subset\R\longmapsto c(s)=(w(s),p(s))\in \Sf^n\times \Sf^1$ a future causal curve in  $Ein_{1,n}$. First suppose that $c$ is $C^1$ piecewise. Then $c$ must verify:
\begin{eqnarray}\label{ineg 1}
\Vert w'(s)\Vert^2\leq \vert p'(s)\vert^2
\end{eqnarray}
This implies that the vector $p'(s)$, tangent to $\Sf^1$, never vanishes. The application $s\longmapsto p(s)$ can then be written as $p(s)=e^{i\phi(s)}$, where $s\longmapsto \phi(s)$ is a monotone map from $I$ to  an interval $J$ of $\R$. Since $c$ is a future causal curve the map $\phi(s)$ is a strictly increasing map. Then, changing the parameter  $s$ into a parameter $t:=\phi(s)$, we have $c(t)=(x(t), e^{it})$, for all $t$ in   $J$, where $x:=w\circ \phi^{-1}$, and  $\Vert x'(t)\Vert^2\leq 1$.  If we integrate the formula between two points $t$ and $t'$ of $J$, we have:
\begin{eqnarray}\label{ineg 2}
d_0(x(t),x(t'))\leq \vert t-t'\vert
\end{eqnarray}
where $d_0$ is the distance over $\Sf^n$ for the canonical metric. The map $t\in I\longmapsto x(t)$ is $1$-Lipschitz; moreover $c$ is timelike if and only if the inequality   (\ref{ineg 1}) is strict, and  (\ref{ineg 1}) is strict  if and only if  the inequality in (\ref{ineg 2}) is strict. Therefore, $C^1$ timelike curves are strictly $1$-Lipschitz. The curve $c$ is a lightlike geodesic if and only if the inequality  (\ref{ineg 1}) is an equality, and this is true if and only if (\ref{ineg 1}) is an equality, that is, if  $x$ is a geodesic of $\Sf^n$. Hence, the lemma is proved for $C^1$ curves.\\
Now assume that $s\in I\longmapsto c(s)=(w(s),p(s))$ is a topological  causal curve (not necessary $C^1$). By definition, given $s<s'$ close one to the other, there exists a non trivial $C^1$ causal curve between $c(s)$ and $c(s')$. Since the result is proved in the case of $C^1$ curves,  we have:
$$0<d_0(w(s),w(s'))\leq \vert p(s)-p(s')\vert$$
Therefore we can write  $p(s)=e^{i\psi(s)}$, where $ \psi(s)$ is strictly increasing map from $\R$ to an interval $K$ of $\R$,  and then  $c(t)=(x(t), e^{it})$, where   $x:=w\circ \psi^{-1}$ satisfies the  inequality  (\ref{ineg 2}), that is, $x$ is  $1$-Lipschitz. As before, we can see that $c$ is timelike if and only if $x$ is strictly $1$-Lipschitz, and that $c$ is a lightlike geodesic if and only if $x$ is a geodesic of $\Sf^n$.
 The lemma is proved.
\end{proof}

\begin{cor}\label{cor: Ein totally vicious}
$Ein_{1,n}$ is totally vicious, i.e. the past and the future of every point is the entire space-time.
\end{cor}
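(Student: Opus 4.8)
The plan is to use the explicit description of causal curves furnished by Lemma~\ref{lemma: curve causali Ein}, which reduces the statement to an elementary fact about Lipschitz paths on the round sphere. Writing $Ein_{1,n}=\Sf^n\times\Sf^1$ with the conformal class of $d\sigma^2-d\theta^2$, a future timelike curve is, up to reparametrization, of the form $t\mapsto(x(t),e^{it})$ where $x$ is a \emph{strictly} $1$-Lipschitz map into $\Sf^n$. The geometric feature I would exploit is the mismatch between the two factors: the ``time'' circle $\Sf^1$ is compact, so a future timelike curve may wind around it arbitrarily many times and thereby accumulate an unbounded amount of the parameter $t$, whereas the ``space'' factor $\Sf^n$ has finite diameter $\pi$ for the canonical metric $d_0$.

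Concretely, I would fix two arbitrary points $p=(x_0,e^{i\theta_0})$ and $q=(x_1,e^{i\theta_1})$ and build an explicit future timelike curve from $p$ to $q$. Choose $T>0$ with $T\equiv\theta_1-\theta_0\pmod{2\pi}$ and $T>\pi$; such a $T$ exists because we are free to add multiples of $2\pi$. Let $\gamma:[0,T]\to\Sf^n$ travel along a minimizing geodesic from $x_0$ to $x_1$ at constant speed $\ell/T$, where $\ell:=d_0(x_0,x_1)\le\pi<T$. Then $d_0(\gamma(t),\gamma(t'))=(\ell/T)\,|t-t'|<|t-t'|$ for $t\neq t'$, so $\gamma$ is strictly $1$-Lipschitz, and by Lemma~\ref{lemma: curve causali Ein} the curve $t\mapsto(\gamma(t),e^{i(\theta_0+t)})$, $t\in[0,T]$, is a future timelike curve joining $p$ to $q$. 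Hence $q\in I^+(p)$, and since $q$ was arbitrary, $I^+(p)=Ein_{1,n}$. Reversing the time orientation gives $I^-(p)=Ein_{1,n}$ as well, which is precisely the assertion that $Ein_{1,n}$ is totally vicious.

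I expect the only subtle point, and the one I would state carefully, to be the strictness required for timelikeness: the construction must keep the spatial speed strictly below $1$, which is why I pick $T$ strictly larger than the diameter rather than merely $T\ge\ell$. The same remark shows, in the special case $p=q$ (take $T=2\pi$ and $\gamma$ constant, so that the spatial speed is $0$), that there is a closed timelike curve through every point, so the viciousness is genuine. Everything else is routine once Lemma~\ref{lemma: curve causali Ein} is granted; in particular no limit-curve or compactness argument from the earlier causality results is needed here.
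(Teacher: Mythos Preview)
Your proof is correct and follows exactly the approach the paper intends: the paper states this as an immediate corollary of Lemma~\ref{lemma: curve causali Ein} with no further argument, and your construction of an explicit strictly $1$-Lipschitz spherical path joining two arbitrary points is the natural way to make that implication precise. The care you take with strictness (choosing $T>\pi\ge\ell$) is exactly the point that needs to be checked, and nothing more is required.
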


 Since $Ein_{1,n}$ is totally vicious, its causal structure gives no information: every point is causally related to any other point. However its universal covering is globally hyperbolic and has a well understood causal structure.  \\ The universal covering of the Einstein space-time,  $\E_{1,n}$, is identified to $(\Sf^n\times\R,[d\sigma^2-dt^2])$, where $dt^2$ is the canonical metric over $\R$.
 Let $pr:\E_{1,n}\rightarrow Ein_{1,n}$ be the covering map.
The fundamental group of $Ein_{1,n}$ is isomorphic to $\mathbb{Z}$: it can be identified with the cyclic group generated by the map $\delta :\E_{1,n} \rightarrow \E_{1,n}$ which  associates to $(x,t)$ the point $(x,t+2\pi)$. This is clearly a conformal diffeomorphism of $\E_{1,n}$.\\
The antipodal map of $\R^{2,n+1}$, $x\in \R^{2,n+1} \longmapsto -x$, defines a map $\overline{\sigma}: Ein_{1,n}\rightarrow Ein_{1,n} $, which is the product of the two antipodal maps of $\Sf^n$ and $\Sf^1$. The map $\overline{\sigma}$ lifts to  $\E_{1,n}$ giving the map $\sigma : \E_{1,n}\rightarrow \E_{1,n}$ which associates to $(x,t)\in \Sf^n\times\R $ the point $(-x, t+\pi)$. Then $\sigma^2=\delta$.\\

\begin{defi}\label{def: punti conniugati in E}
\emph{Two points $x$ and $y$ of  $\E_{1,n}$ are \em conjugate \em if one is the image by $\sigma$ of the other.}
\end{defi}

Since the projection $pr: \E_{1,n}\to Ein_{1,n}$ is a conformal map, given a  causal curve $c:$ in $\E_{1,n}$, the curve $pr\circ c$ is a causal curve of $Ein_{1,n}$. Then by Lemma \ref{lemma: curve causali Ein} we have:

\begin{lem}\label{lem: curve causali E}
Every causal curve $c$ of $\E_{1,n}$ can be parameterized as $c(t)=(x(t),t)$, where $x(t)$ is a $1$-Lipschitz map from an interval of $\R$ into $\Sf^n$.
The lightlike geodesics of  $\E_{1,n}$ are the causal curves $c$ such that, for the previous parametrization,   $x(t)$ is a geodesic of $\Sf^n$ parameterized by its arc length (see Figure \ref{fig:geoloceEin}).
\end{lem}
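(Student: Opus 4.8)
The statement is, in effect, a corollary of Lemma~\ref{lemma: curve causali Ein} transported along the covering projection $pr:\E_{1,n}\to Ein_{1,n}$, and the plan is to exploit this rather than to redo the analytic argument from scratch. Recall that $\E_{1,n}=(\Sf^n\times\R,[d\sigma^2-dt^2])$ and that $pr$ sends $(x,t)$ to $(x,e^{it})$. Since $t\mapsto e^{it}$ pulls $d\theta^2$ back to $dt^2$, the map $pr$ is a local isometry for these representatives of the conformal classes, in particular a conformal local diffeomorphism and a covering map. Consequently $pr$ preserves the causal type of tangent vectors and carries causal curves to causal curves, which is exactly the bridge needed.

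First I would establish the parametrization. Let $c:I\to\E_{1,n}$, $c(s)=(w(s),p(s))$, be a future causal curve. Its projection $pr\circ c(s)=(w(s),e^{ip(s)})$ is a future causal curve of $Ein_{1,n}$, so Lemma~\ref{lemma: curve causali Ein} applies and yields a reparametrization of $pr\circ c$ whose $\Sf^n$-component $\bar x$ is $1$-Lipschitz and whose $\Sf^1$-component is of the form $e^{i\theta}$ with $\theta$ a strictly increasing function of $s$ (up to the harmless rescaling normalizing the $\Sf^1$-coordinate). Comparing the two expressions for the $\Sf^1$-factor forces $p(s)=\theta(s)+2\pi k$ for a locally constant, hence constant, integer $k$; in particular $p$ is itself strictly increasing. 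Reparametrizing $c$ by $t:=p(s)$ then gives $c(t)=(x(t),t)$ with $x(t):=\bar x(t-2\pi k)$, which is $1$-Lipschitz because $\bar x$ is and a shift of the argument does not change the Lipschitz constant. The past-directed case is identical after reversing orientation.

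For the lightlike geodesics I would argue that $c$ is a lightlike geodesic of $\E_{1,n}$ if and only if $pr\circ c$ is one of $Ein_{1,n}$. One implication is Lemma~\ref{g\'eod\'esiques lumieres} applied to the conformal map $pr$; the converse follows by applying the same lemma to the local conformal inverses of $pr$, which exist because $pr$ is a local diffeomorphism. Invoking the geodesic part of Lemma~\ref{lemma: curve causali Ein} then characterizes these curves as exactly those for which $\bar x$, and hence $x$, is a geodesic of $\Sf^n$ parameterized by arc length.

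I do not expect a genuine obstacle: all the analysis already sits inside Lemma~\ref{lemma: curve causali Ein}, and the sole role of the covering map is to replace the $\Sf^1$-valued last coordinate by its $\R$-valued lift. The one point that deserves care is the passage of the \emph{parametrization}, not merely the image set, through $pr$, i.e.\ checking that the monotone $\Sf^1$-parameter downstairs lifts to a global parameter on the simply connected cover; this is precisely where the constancy of the shift $k$ and the fact that $t$ ranges over an interval of $\R$ (so that the curve may wind arbitrarily in the $\Sf^1$-factor downstairs) are used. Alternatively, since the last coordinate of $\E_{1,n}$ is already real-valued, one may simply repeat the computation of the proof of Lemma~\ref{lemma: curve causali Ein} verbatim, replacing $e^{i\theta}$ by $t$ throughout and recovering the estimate $d_0(x(t),x(t'))\le|t-t'|$ directly; this route avoids the lifting discussion at the cost of duplicating the argument.
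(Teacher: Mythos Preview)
Your proposal is correct and follows exactly the paper's approach: the paper's proof consists of the single observation that $pr\circ c$ is a causal curve of $Ein_{1,n}$, whence Lemma~\ref{lemma: curve causali Ein} applies. You have simply made explicit the lifting step (constancy of the integer shift $k$, and the two-way use of Lemma~\ref{g\'eod\'esiques lumieres} for the geodesic characterization) that the paper leaves to the reader.
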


A causal curve of $\E_{1,n}$ is inextensible  if the   parametrization given by the previous lemma is defined for every $t$ in $\R$. It is then easy to see that $\E_{1,n}$ is a globally hyperbolic space-time,  with Cauchy hypersurfaces homeomorphic to $\Sf^n$: the map  $(x,t)\in \Sf^n\times \R\simeq \E_{1,n}\to \R$  is a Cauchy time function.

\begin{figure}
\centering 
\includegraphics[height=6 cm]{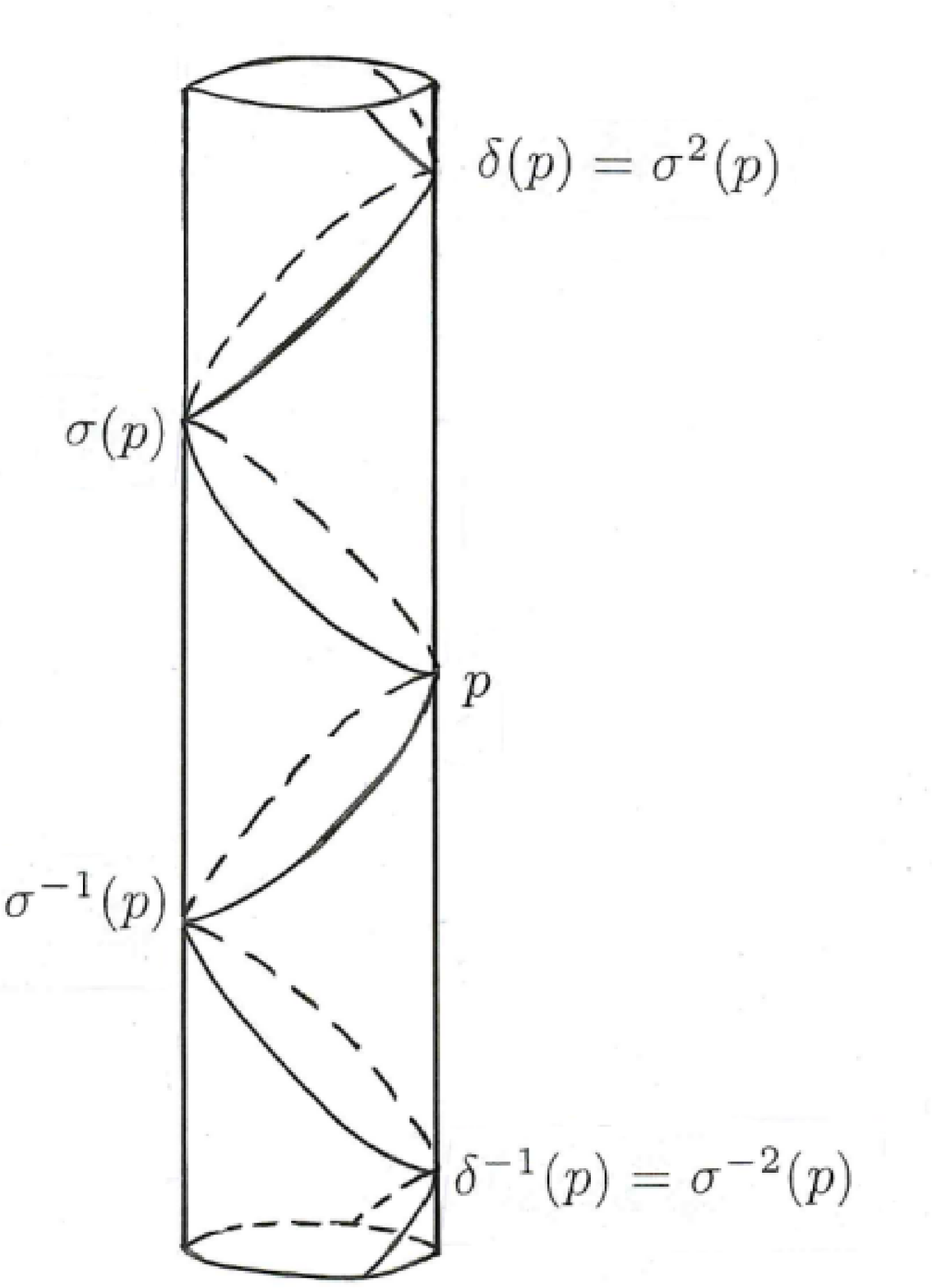}
\caption{ \protect Two inextensibles lightlike geodesics in  $\E_{1,n}\simeq \R\times \Sf^n$ \label{fig:geoloceEin}}
\end{figure}
 
Thanks to Lemma \ref{lem: curve causali E} we can understand the causal structure of  $\E_{1,n}$:

\begin{lem}\label{lemma: passato e futuro in E}
Let $p=(x,t)\in \E_{1,n}$. Then
\begin{eqnarray*}
I^\pm(p) & = & \{ (x',t')\in \E_{1,n} / d_0(x,x')< \pm(t'-t) \} \\
J^\pm(p) & = & \{ (x',t')\in \E_{1,n} / d_0(x,x')\leq \pm(t'-t) \}
\end{eqnarray*}
where $d_0$ is the canonical distance on $\Sf^n$.
\end{lem}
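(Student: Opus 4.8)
The plan is to deduce everything directly from the parametrization of causal curves established in Lemma~\ref{lem: curve causali E}. It suffices to treat $I^+(p)$ and $J^+(p)$: the formulas for $I^-(p)$ and $J^-(p)$ then follow at once, since the map $(x,t)\mapsto(x,-t)$ is a conformal, time-reversing involution of $\E_{1,n}$ that exchanges past and future while turning $t'-t$ into $-(t'-t)$.

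For the inclusion of $I^+(p)$ into the set on the right, I would start from a point $(x',t')\in I^+(p)$ and pick a future-directed timelike curve $c$ from $p$ to $(x',t')$. By Lemma~\ref{lem: curve causali E} I may reparametrize $c$ as $s\mapsto(y(s),s)$ with $s$ ranging over $[t,t']$, $y(t)=x$, $y(t')=x'$, and --- since $c$ is timelike --- with $y$ strictly $1$-Lipschitz for $d_0$. In particular $d_0(x,x')=d_0(y(t),y(t'))<|t'-t|=t'-t$, which is the desired strict inequality. Replacing ``timelike / strictly $1$-Lipschitz'' by ``causal / $1$-Lipschitz'' gives the non-strict inequality $d_0(x,x')\le t'-t$, hence the corresponding inclusion for $J^+(p)$.

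For the reverse inclusions I would exhibit explicit curves. Suppose first $d_0(x,x')<t'-t$. If $x=x'$ take the vertical curve $s\mapsto(x,s)$ on $[t,t']$; otherwise choose a minimizing geodesic $\gamma$ of $\Sf^n$ from $x$ to $x'$ and parametrize it on $[t,t']$ with constant speed $d_0(x,x')/(t'-t)$. In either case the spatial part has constant speed strictly less than $1$, hence is strictly $1$-Lipschitz, so by Lemma~\ref{lem: curve causali E} the curve $s\mapsto(\gamma(s),s)$ (or the vertical one) is a future timelike curve from $p$ to $(x',t')$, proving $(x',t')\in I^+(p)$. The same construction with speed $d_0(x,x')/(t'-t)\le 1$ handles the causal case $d_0(x,x')\le t'-t$, the degenerate subcase $t'=t$ forcing $x=x'$ and $(x',t')=p\in J^+(p)$.

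The only genuinely delicate point is the equivalence ``$c$ timelike $\Leftrightarrow$ spatial part strictly $1$-Lipschitz'', together with the passage from a pointwise strict speed bound to a strict inequality between distances; but this is exactly the content already established inside the proof of Lemma~\ref{lemma: curve causali Ein} (the equivalence of the strict versions of the inequalities (\ref{ineg 1}) and (\ref{ineg 2})), so nothing new is required here. All curves I construct automatically remain inside $\E_{1,n}=\Sf^n\times\R$, so no inextensibility or domain issue arises, and the proof reduces to these two short verifications.
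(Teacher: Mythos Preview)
Your argument is correct and is precisely the verification the paper intends: the paper itself offers no proof beyond the sentence ``Thanks to Lemma~\ref{lem: curve causali E} we can understand the causal structure of $\E_{1,n}$'' followed by a \qed, so you have simply (and cleanly) written out the details that the author left to the reader.
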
 \qed

\begin{oss}\label{oss: geodesiche luce E intersezioni}
The inextensible lightlike geodesics  starting from a point $p$ of $\E_ {1,n}$ have common intersections at all the points $\sigma^k(p)$, for $k\in \mathbb{Z}$. Outside these points, they are pairwise disjoint.
\end{oss}


\paragraph{Rigidity of conformal maps.}
Liouville's Theorem is originally  a theorem of conformal Riemannian geometry  stating that, in dimension $n\geq 3$, every conformal map between two open sets of the sphere is the restriction of an unique element of $O^+(1,n)$. This implies that the group of conformal transformations of the sphere $ \Sf^n$ (also called the group of  M\"obius transformations)  is exactly $O^+(1,n)$. \\
 This theorem has been generalized by C. Frances (see \cite{Frances} and \cite{Cahenkerbrat82}) to pseudo-Riemannian conformally flat metrics. This has been possible  because the Liouville's Theorem is an aspect of a more general phenomena: the rigidity of conformal maps, between pseudo-Riemannian manifolds of dimension greater or equal than $3$.\\ Let    $M$ and $N$ be two manifolds. Denote by $ \mathrm{Diff}_{loc}(M,N)$ the set of local diffeomorphism  between  $M$ and $N$. On $ \mathrm{Diff}_{loc}(M,N)$ we have the following equivalence relation: two local diffeomorphisms $f,g: M \to N$ are said to be equivalent if in some local chart they have the same Taylor polynomial up to order $r$ at $x$. The  \em $r$-jet \em of $f\in \mathrm{Diff}_{loc}(M,N) $ at a point $x\in M$, denoted by $j_x^rf$, is the equivalence class of $f$ for this relation. We have the following rigidity result:

\begin{teo}\label{rigidite}
Let $M$ and $N$ be two pseudo-Riemannian manifolds of dimension $\geq 3$. Let $f$ and $g$ be two conformal maps from $M$ to $N$. If  $f$ and $g$ have the same $2$-jet at one point of $M$, then they are equal.
\end{teo}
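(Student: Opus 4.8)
The plan is to prove this by a connectedness argument resting on the rigidity of the canonical Cartan connection attached to a conformal structure. First I would introduce the set
\[
A = \{\, p \in M \ :\ j_p^2 f = j_p^2 g \,\},
\]
which is non-empty by hypothesis. Since the $2$-jet of a smooth map depends continuously on the base point, the condition $j_p^2 f = j_p^2 g$ is closed, so $A$ is closed in $M$. As $M$ is connected, it then suffices to prove that $A$ is open; this is the only place where the dimension hypothesis will really be used.

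To obtain openness I would invoke the Cartan-geometric description of conformal structures, which is exactly the content behind the generalization of Liouville's theorem. In dimension $\geq 3$ a pseudo-Riemannian conformal class of signature $(p,q)$ on $M$ canonically determines a principal bundle $\pi:\B_M\to M$ whose structure group is a parabolic subgroup $P$ of $O(p+1,q+1)$, together with a normal Cartan connection $\omega_M$ valued in $\mathfrak{o}(p+1,q+1)$, modelled on the projectivized null cone $X=O(p+1,q+1)/P$ (the Einstein universe $Ein_{1,n}$ in the Lorentzian case). Two features of this construction drive the proof: (i) it is functorial, so a conformal map $f:M\to N$ lifts to a unique bundle isomorphism $\hat f:\B_M\to\B_N$ with $\hat f^*\omega_N=\omega_M$; and (ii) a point of the fibre $\pi^{-1}(p)$ encodes a $2$-jet of conformal frame at $p$, and the lift $\hat f$ acts on such jets through $j_p^2 f$. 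Consequently, if $p\in A$ then $\hat f$ and $\hat g$ take the same value at some (hence, by $P$-equivariance, every) point $b_0\in\pi^{-1}(p)$.

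The final and main step is the rigidity of Cartan connections: any two maps between the total spaces that pull $\omega_N$ back to $\omega_M$ and agree at one point $b_0$ agree on the whole connected component of $b_0$. Indeed, $\omega$ identifies each tangent space with $\mathfrak{o}(p+1,q+1)$, so the ``constant'' vector fields $\omega^{-1}(\xi)$ are intertwined by any connection-preserving map; such a map therefore commutes with their local flows, and since these flows are locally transitive the map is pinned down near $b_0$ by the single value $\hat f(b_0)$. The agreement set is thus open and closed in $\B_M$ and contains the component $\B^0$ of $b_0$; as $\pi$ is an open map, $\pi(\B^0)$ is an open neighbourhood of $p$, and over each of its points a frame on which $\hat f=\hat g$ witnesses $f=g$. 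Hence $\pi(\B^0)\subseteq A$, which proves $A$ open and, with connectedness, gives $A=M$ and $f=g$. I expect the genuine obstacle to be step (ii): establishing the identification of the canonical bundle with a prolongation of the $2$-frame bundle, so that a conformal map is faithfully encoded by its $2$-jet. This is precisely the construction and normalization of the conformal Cartan connection, and it is exactly here that the argument breaks down in dimension $2$, where conformal maps form an infinite-dimensional family and are not determined by any finite jet.
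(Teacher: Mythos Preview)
Your argument is correct and is the standard Cartan-geometric proof of conformal rigidity in dimension $\geq 3$. Note, however, that the paper does not supply its own proof of this statement: after stating the theorem it simply refers to \cite{Frances} (chapter~2) and to \cite{Kobayashi}, \cite{Ruh} for the details. Your sketch is essentially the argument one finds there: the normal conformal Cartan connection furnishes an absolute parallelism on a principal $P$-bundle whose fibre points correspond to $2$-jets of conformal frames, a conformal map lifts functorially to a connection-preserving bundle map, and any two such lifts agreeing at one point agree on the connected component by the flow argument you describe. The identification of the Cartan bundle with a reduction of the second-order frame bundle---your point~(ii)---is indeed the technical heart, and is precisely what fails in dimension~$2$.
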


\noindent A proof of this result can be found in \cite{Frances} (chapter $2$). For  more details about rigidity of conformal application see also \cite{Kobayashi}  and \cite{Ruh}. \\
Liouville's Theorem is then a consequence of Theorem \ref{rigidite}.

\begin{teo}(Liouville)\label{Liouville}
For every $n\geq 2$, every conformal map between two open sets of  $Ein_{1,n}$ is the restriction of a unique element of   $SO(2,n+1)$.
\end{teo}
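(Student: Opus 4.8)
\emph{Uniqueness} is immediate from Theorem \ref{rigidite}. If two elements $g_1,g_2\in SO(2,n+1)$ restrict to the same conformal map on a nonempty open set, then they have the same $2$-jet at every point of that set; since both act conformally on the whole of $Ein_{1,n}$, which has dimension $n+1\geq 3$, Theorem \ref{rigidite} forces $g_1$ and $g_2$ to coincide as conformal diffeomorphisms of $Ein_{1,n}$, and hence as group elements, because the action of $O(2,n+1)$ on $Ein_{1,n}$ is faithful.

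For \emph{existence}, fix a point $x_0$ in $U$, which we may assume connected (otherwise we argue on each component). The whole idea is to manufacture an element $g\in SO(2,n+1)$ whose $2$-jet at $x_0$ equals that of $f$. Once this is done, $g^{-1}\circ f$ is a conformal map defined near $x_0$ whose $2$-jet at $x_0$ coincides with that of the identity, so Theorem \ref{rigidite} gives $g^{-1}\circ f=\mathrm{id}$ on the connected set $U$, that is $f=g\vert_U$.

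To produce $g$ I would match the $2$-jet layer by layer, using the homogeneous structure of $Ein_{1,n}$ under $SO(2,n+1)$. First, the action being transitive, I choose an element carrying $x_0$ to $f(x_0)$, which reduces us to the case $f(x_0)=x_0$. Second, the isotropy representation of the stabilizer of $x_0$ surjects onto the conformal linear group $CO(1,n)=\R_{>0}\times O(1,n)$ of the tangent space $T_{x_0}Ein_{1,n}$; since $d_{x_0}f$ is a conformal linear isomorphism, an element of the stabilizer can be chosen with the same derivative at $x_0$, reducing us to $f(x_0)=x_0$ and $d_{x_0}f=\mathrm{id}$. Third, the remaining second-order datum lives in a space isomorphic to $(\R^{1,n})^{*}$, and it is realized by the special conformal transformations, i.e. the unipotent part of the stabilizer coming from the inversions of a dense conformal chart $\R^{1,n}\hookrightarrow Ein_{1,n}$ (whose existence was recorded in Section \ref{sezione: Ein}); this abelian subgroup fixes $x_0$ with trivial derivative and acts simply transitively on the fibre of $2$-jets over a fixed $1$-jet. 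Composing the three elements produces the desired $g$ with $j^2_{x_0}g=j^2_{x_0}f$. That the three layers exhaust the whole $2$-jet is confirmed by a dimension count: $\dim SO(2,n+1)=\tfrac{(n+2)(n+3)}{2}$ equals $(n+1)+\bigl(\tfrac{n(n+1)}{2}+1\bigr)+(n+1)$, which is precisely the dimension of the space of $2$-jets of conformal maps at a point.

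The main obstacle is this third step: verifying that the special conformal transformations realize \emph{all} admissible second-order terms. This is exactly where the conformal, rather than merely isometric, flexibility of $Ein_{1,n}$ intervenes, and it rests on the explicit dense conformal embedding of $\R^{1,n}$ together with its inversion. A minor remaining point is orientation bookkeeping: one checks that the element so constructed, a priori in $O(2,n+1)$, may be taken in $SO(2,n+1)$, composing if necessary with a suitable determinant $\pm1$ symmetry of the ambient form on $\R^{2,n+1}$.
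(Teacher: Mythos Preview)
Your proof is correct and follows precisely the route the paper indicates: the paper does not give a detailed argument but simply states that Liouville's Theorem is a consequence of Theorem \ref{rigidite}, and your proposal supplies exactly those details --- uniqueness from $2$-jet rigidity, existence by exhibiting an element of the group with the prescribed $2$-jet via the three-step decomposition (translation, conformal linear part, special conformal transformation). The only loose end you flag yourself: the orientation bookkeeping needed to land in $SO(2,n+1)$ rather than $O(2,n+1)$ deserves one more sentence, since an orientation-reversing local conformal map cannot be the restriction of an $SO(2,n+1)$ element and the theorem as stated is slightly imprecise on this point (the paper itself notes just after the statement that the full conformal group is $O(2,n+1)$).
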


In particular this implies that the conformal group of $Ein_{1,n}$ is exactly $O(2,n+1)$ and the group of conformal maps which preserve the orientation and the time-orientation is the connected component of the identity, that we denote by $O_0(2,n+1)$. \\
Every conformal diffeomorphism of $Ein_{1,n}$ lifts to a conformal diffeomorphism of   $\E_{1,n}$. By Liouville's Theorem,  when  $n\geq 2$, the reverse statement is also true: every conformal diffeomorphism of $\E_{1,n}$ defines a unique conformal diffeomorphism of $Ein_{1,n}$.  So we have a surjective morphism $j: \operatorname{Conf}(\E_{1,n})\rightarrow \operatorname{Conf}(Ein_{1,n})$. The kernel of $j$ is the subgroup generated by $\delta$; it is contained in the center of $\operatorname{Conf}(\E_{1,n})$. Then $\operatorname{Conf}(\E_{1,n}) = \widetilde{O(2,n+1)}=O(2,n+1)\rtimes \Z$.

\begin{cor}
Every conformally flat space-time of dimension greater then $3$ is locally modeled on $(Ein_{1,n+1},O_0(2,n+1))$.
\end{cor}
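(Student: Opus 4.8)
The plan is to realize $M$ as a $(G,X)$-manifold, with $X=Ein_{1,n}$ the Einstein space-time and $G=O_0(2,n+1)$ its group of orientation- and time-orientation-preserving conformal diffeomorphisms, by building a conformal atlas whose coordinate changes are forced to lie in $G$.

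First I would unwind the definition of conformal flatness. By hypothesis, every point $p\in M$ has a neighborhood $U_p$ on which the conformal class of the metric contains a flat metric; equivalently, there is a conformal diffeomorphism $\psi_p\colon U_p\to V_p$ onto an open subset $V_p$ of $\R^{1,n}$. As recalled after the definition of the Einstein space-time, $\R^{1,n}$ admits a conformal embedding $\iota\colon\R^{1,n}\hookrightarrow Ein_{1,n}$; composing, I obtain conformal charts $\phi_p:=\iota\circ\psi_p\colon U_p\to Ein_{1,n}$. Since $M$ is a space-time it is orientable and time-orientable, so each $\phi_p$ may be chosen so as to preserve both orientations.

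The decisive step is the analysis of the transition maps. For two overlapping charts, the map $\phi_q\circ\phi_p^{-1}$ is a conformal diffeomorphism between two open subsets of $Ein_{1,n}$. Because $\dim M=n+1\geq 3$, Liouville's Theorem (Theorem~\ref{Liouville}) applies and shows that $\phi_q\circ\phi_p^{-1}$ is the restriction of a unique element of $O(2,n+1)$; as the charts preserve orientation and time-orientation, this element lies in the identity component $O_0(2,n+1)=G$. Hence $\{(U_p,\phi_p)\}_p$ is a $(G,Ein_{1,n})$-atlas, which is exactly the assertion that $M$ is locally modeled on $(Ein_{1,n},O_0(2,n+1))$.

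The genuine content, and the only real obstacle, is the passage from the purely local metric statement ``$M$ is conformally flat'' to the rigid statement ``the coordinate changes lie in the finite-dimensional group $O(2,n+1)$''. This is precisely the rigidity of conformal maps in dimension $\geq 3$ encoded in Theorem~\ref{rigidite}, of which Liouville's Theorem is the relevant corollary; in dimension $2$ the conformal group of $Ein$ is infinite-dimensional and the conclusion fails, which is why the hypothesis on the dimension is indispensable. Everything else --- that $\iota$ is conformal, that the charts can be normalized so as to respect the two orientations, and that the cocycle condition for the transition maps is automatic --- is routine given the results already established in this section.
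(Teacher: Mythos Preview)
Your proposal is correct and is exactly the standard argument the paper has in mind: the paper does not spell out a proof but simply declares the result ``quite standard'' and points to \cite{Matsumoto} for the Riemannian analogue. Your write-up makes explicit the two ingredients the paper has just assembled --- the conformal embedding of $\R^{1,n}$ into $Ein_{1,n}$ and Liouville's Theorem for the transition maps --- so there is nothing to add.
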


\noindent The proof of this result is quite standard, it can be found for example in \cite{Matsumoto} in the   case of Riemannian conformal geometry. In fact in Riemannian geometry we have the same situation:  Liouville's Theorem implies that every conformally flat Remannnian manifold is locally modeled on the conformal sphere endowed with the action of its group of conformal transformations (see \cite{Matsumoto}). More details on the general theory of $(X,G)$-manifolds can be found in  \cite{Goldman}.

\section{$C_0$-maximum extension}\label{sec: C0 extension}
 
\subsection{Cauchy-embeddings.}
In this section $M$ and $N$ will always assumed to be globally hyperbolic space-times with Cauchy hypersurfaces $S$ and $S'$ respectively. We say that a conformal  map $f: M \to N$ is a \textit{(conformal) Cauchy-embedding} if  $S'=f(S)$;  $f$ is also denoted  by $f: (M,S) \rightarrow (N,S')$. If moreover $f$ is an isometry, we say that $f$ is an \textit{isometric Cauchy-embedding}.\\
To prove the existence and uniqueness of the conformally flat maximal extension we need some technical results about conformal Cauchy-embeddings. Since these results only involve causal properties, it makes no difference whether we consider conformal or isometric Cauchy-embeddings.

\begin{lem}\label{lemma: l'immagine Cauchy-plong. e causalemente convessa}
The image of any conformal Cauchy-embedding $f: (M,S)\rightarrow (N,S')$ is a causally convex open subset of $N$.
\end{lem}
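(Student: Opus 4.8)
The plan is to establish the two assertions of the statement separately. For openness, note that $f$ is a conformal map between manifolds of the same dimension, hence a local diffeomorphism and in particular an open map; being an embedding, it restricts to a conformal diffeomorphism $f\colon M\to f(M)$, so $f(M)$ is open in $N$. Because the causal structure, and therefore the notion of Cauchy hypersurface (Definition \ref{hypersurface de Cauchy}) and of Cauchy development (Definition \ref{def: sviluppo di Cauchy}), is a conformal invariant, the image $f(M)$ is a globally hyperbolic space-time admitting $S'=f(S)$ as a Cauchy hypersurface; thus $f(M)=\mathscr{D}_{f(M)}(S')$. On the other hand $S'$ is by hypothesis a Cauchy hypersurface of $N$, so, arguing as in the proof of Proposition \ref{prop: ipersuperficie de Cauchy compatta}, one obtains the disjoint decomposition $N=I^-_N(S')\sqcup S'\sqcup I^+_N(S')$, and likewise with $f(M)$ in place of $N$. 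In particular every point of $f(M)\cap I^+_N(S')$ lies in $\mathscr{D}^+_{f(M)}(S')$, and symmetrically for the past.

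The heart of the argument is the following one-sided claim: if $p\in f(M)\cap J^+_N(S')$ and $\alpha$ is a past causal curve of $N$ starting at $p$ and contained in $J^+_N(S')$, then $\alpha\subset f(M)$. I would prove it by a first-exit argument. Let $\sigma$ be the supremum of the parameters up to which $\alpha$ stays in $f(M)$; since $f(M)$ is open and $p\in f(M)$ we have $\alpha([0,\sigma))\subset f(M)$, and if the claim failed then $y:=\alpha(\sigma)$ would be a point of $\partial f(M)$, in particular $y\notin f(M)$. As $S'\subset f(M)$, this forces $y\notin S'$, hence $y\in I^+_N(S')$. Now $\alpha|_{[0,\sigma)}$ is a causal curve of $f(M)$ issued from $p$ whose only possible past endpoint in $N$ is $y\notin f(M)$, so it is past inextensible in $f(M)$. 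Moreover it never meets $S'$: it is contained in $J^+_N(S')=S'\cup I^+_N(S')$ and starts at $p\in I^+_N(S')$ (the case $p\in S'$ being degenerate by achronality), so a crossing of $S'$ would force the continuation into $I^-_N(S')$, contradicting $\alpha\subset J^+_N(S')$, while its limit $y$ lies in $I^+_N(S')$ and not on $S'$. But $p\in \mathscr{D}^+_{f(M)}(S')$, so every past inextensible causal curve of $f(M)$ issued from $p$ must meet $S'$, a contradiction. Hence no exit point exists and $\alpha\subset f(M)$. The symmetric statement, with future causal curves and $\mathscr{D}^-_{f(M)}(S')$, holds for $p\in f(M)\cap J^-_N(S')$.

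Finally I would deduce causal convexity. Let $p,q\in f(M)$ and let $\gamma$ be a causal curve of $N$ joining them; up to reversing orientation assume it is future directed. If $\gamma\subset J^+_N(S')$ then, reading it as a past causal curve issued from $q$, the one-sided claim gives $\gamma\subset f(M)$, and the case $\gamma\subset J^-_N(S')$ is symmetric. In general, since $S'$ is a Cauchy hypersurface of $N$, $\gamma$ crosses $S'$ at a single point $s_\ast\in S'\subset f(M)$, and splitting $\gamma$ at $s_\ast$ reduces matters to the two one-sided cases applied to the sub-curves from $p$ to $s_\ast$ and from $s_\ast$ to $q$. This shows $\gamma\subset f(M)$, so $f(M)$ is causally convex. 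The step I expect to require the most care is the one-sided claim: one must verify that the truncated curve $\alpha|_{[0,\sigma)}$ is genuinely past inextensible inside $f(M)$ and genuinely avoids $S'$, i.e. that the escape from $f(M)$ cannot occur precisely through $S'$ nor be an artifact of reparametrization. This is exactly where the two hypotheses that $S'$ is a Cauchy hypersurface of both $N$ and $f(M)$, together with the openness of $f(M)$, are used jointly.
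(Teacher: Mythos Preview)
Your argument is correct. Both your proof and the paper's hinge on the same pair of facts: $S'=f(S)$ is a Cauchy hypersurface of the open submanifold $f(M)$ (so every inextensible causal curve of $f(M)$ must meet $S'$) and of $N$ (so a causal curve of $N$ meets $S'$ at most once). The executions, however, differ. You decompose $N$ along $S'$, run a first-exit argument on each side, and then glue at the crossing point with $S'$. The paper proceeds more directly: it extends the given causal segment between $f(x)$ and $f(y)$ to an inextensible causal curve $c:\mathbb{R}\to N$, observes that on each connected component $I$ of $c^{-1}(f(M))$ the curve $(f|_{f(M)})^{-1}\circ c|_I$ is inextensible in $M$ and hence meets $S$, and concludes that since $c$ meets $f(S)=S'$ exactly once there can be only one such component, which therefore contains $[0,1]$. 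The paper's route is shorter because the connected-components viewpoint absorbs both your $J^+/J^-$ case split and your verification that the exit point cannot lie on $S'$ into a single counting argument. Your approach, in exchange, yields a slightly stronger intermediate statement: your one-sided claim does not require the far endpoint of the curve to lie in $f(M)$.
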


\begin{proof}
Let  $x$ and $y$ be two points of  $M$ such that there exists  a future causal curve   $\alpha:[0,1]\rightarrow N$ with $c(0)=f(x)$ and $c(1)=f(y)$. Let  $c:\mathbb{R}\rightarrow N$ be an inextensible future causal curve which extends $\alpha$. Since $N$ is globally hyperbolic,  $c:\mathbb{R}\rightarrow N$ is an embedding. Then, since $f(M)$ is an open subset of $N$, the intersection $c^{-1}(f(M)\cap c(\mathbb{R}))$ is the union of disjoint segments (the connected components). Let $I$ be one of them. The curve $C:=(f\vert_{f(M)})^{-1}\circ c: I\rightarrow M$  is a causal inextensible curve in $M$; therefore, $C(t)$ intersects the Cauchy hypersurface   $S$ of $M$ at a point $z\in M$. It implies $f(z)\in f(S)\cap c(I)\neq  \emptyset $. Since $f(S)$ is a  Cauchy hypersurface it meets every causal curve at most at one point. Then  $c^{-1}(f(M)\cap c(\mathbb{R}))$ has only one connected component which is the entire interval $[0,1]$. It follows that $f(M)$ is causally convex.
\end{proof}

\begin{cor}\label{cor: immagine di acronale per un Cauchy-plongement}
The image of every achronal set $A$ of $M$ by any conformal Cauchy-embedding $f: (M,S) \rightarrow (N,S')$ is an achronal  subset of $N$.
\end{cor}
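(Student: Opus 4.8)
The plan is to argue by contradiction, leveraging the causal convexity of $f(M)$ established in Lemma~\ref{lemma: l'immagine Cauchy-plong. e causalemente convessa} together with the fact that a conformal diffeomorphism preserves the causal type of curves. Recall that, by definition, $f(A)$ is achronal in $N$ precisely when no timelike curve of $N$ meets $f(A)$ in two distinct points. So I would suppose, for contradiction, that $f(A)$ fails to be achronal: there is a timelike curve of $N$ passing through two distinct points $p=f(a_1)$ and $q=f(a_2)$ with $a_1,a_2\in A$. Extracting the relevant subarc, I obtain a timelike (hence causal) curve in $N$ joining $p$ to $q$, both of which lie in $f(M)$.

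The key step is then to invoke the causal convexity of $f(M)$. Since the subarc is a causal curve in $N$ whose endpoints $p$ and $q$ belong to $f(M)$, Lemma~\ref{lemma: l'immagine Cauchy-plong. e causalemente convessa} forces the entire curve to be contained in $f(M)$. This containment is exactly what is needed in order to transport the curve back to $M$ via the inverse of $f$.

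To conclude, I would use that $f$ is a conformal embedding, so its inverse $(f\vert_{f(M)})^{-1}:f(M)\to M$ is again conformal and therefore preserves the causal character of curves; in particular it sends timelike curves to timelike curves. Applying it to the subarc yields a timelike curve in $M$ joining the two distinct points $a_1$ and $a_2$ of $A$, contradicting the achronality of $A$ in $M$.

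There is no genuine obstacle here beyond applying the previous lemma correctly: the whole argument is short once causal convexity is in hand. The only point requiring a little care is the logical ordering — one must first establish that the timelike subarc remains inside $f(M)$ (which is precisely what causal convexity guarantees) \emph{before} the conformal inverse $(f\vert_{f(M)})^{-1}$ can be applied to pull it back to $M$. Since the argument uses only causal properties, it applies verbatim whether $f$ is a conformal or an isometric Cauchy-embedding.
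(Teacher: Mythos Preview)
Your argument is correct and is essentially identical to the paper's own proof: assume a timelike curve $\gamma$ joins two points of $f(A)$, use Lemma~\ref{lemma: l'immagine Cauchy-plong. e causalemente convessa} to conclude $\gamma\subset f(M)$, and pull back by $(f\vert_{f(M)})^{-1}$ to contradict the achronality of $A$.
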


\begin{proof}
Assume by contradiction that there is a timelike curve $\gamma$ between two points of $f(A)$. Since $f(M)$ is causally convex  in $N$, the curve $\gamma$ is completely contained in $f(M)$. Then the curve $(f\vert_{f(M)})^{-1}\circ \gamma$ is a timelike curve between two points of $A$: this contradicts the hypothesis.
\end{proof}

\begin{cor} 
A conformal Cauchy-embedding $f: (M,S) \rightarrow (N,S')$ sends every Cauchy hypersurface of $M$ to a Cauchy hypersurface of $N$.
\end{cor}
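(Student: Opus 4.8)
The plan is to verify, for an arbitrary Cauchy hypersurface $\Sigma$ of $M$, the three defining properties of a Cauchy hypersurface of $N$ (Definition \ref{hypersurface de Cauchy}) for the set $f(\Sigma)$. Two of these are cheap. Achronality is immediate from Corollary \ref{cor: immagine di acronale per un Cauchy-plongement}: since $\Sigma$ is achronal, so is $f(\Sigma)$. Edgelessness transfers by locality: every point of $f(\Sigma)$ lies in the open set $f(M)$, on which $f^{-1}$ is a conformal diffeomorphism onto $M$ preserving the relative causal structure, so the two conditions of Definition \ref{def: edgeless} at a point $f(w)$ are exactly the images of the corresponding conditions for $\Sigma$ at $w$.

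The heart of the argument, and the step I expect to be the main obstacle, is to show that $f(\Sigma)$ meets every inextensible causal curve of $N$; this mirrors the proof of Lemma \ref{lemma: l'immagine Cauchy-plong. e causalemente convessa}. Let $c$ be an inextensible causal curve of $N$. Because $S'=f(S)$ is by hypothesis a Cauchy hypersurface of $N$, the curve $c$ meets $S'\subset f(M)$, so $c^{-1}(f(M))$ is nonempty; since $f(M)$ is open and causally convex (Lemma \ref{lemma: l'immagine Cauchy-plong. e causalemente convessa}), this set is a single open interval $I$. On $I$ the curve $C:=(f\vert_{f(M)})^{-1}\circ c$ is a causal curve of $M$, and it is inextensible: at an endpoint of $I$ the curve $c$ either reaches a point of $N\setminus f(M)$ or leaves every compact set, and in either case $C$ can have no limit point in $M$ (otherwise $c$ would re-enter $f(M)$). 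Since $\Sigma$ is a Cauchy hypersurface of $M$, the curve $C$ meets $\Sigma$, and therefore $c$ meets $f(\Sigma)$. The crucial input here is the use of $S'=f(S)$ to guarantee that $c$ enters the image $f(M)$ at all.

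Finally I would deduce that an achronal set meeting every inextensible causal curve is a Cauchy hypersurface. For closedness, if $p\in\overline{f(\Sigma)}\setminus f(\Sigma)$, an inextensible timelike curve through $p$ meets $f(\Sigma)$ at a point $a\neq p$ chronologically related to $p$; approximating $p$ by points of $f(\Sigma)$ and using that $I^\pm(a)$ are open produces two chronologically related points of $f(\Sigma)$, contradicting achronality. For $\mathscr{D}(f(\Sigma))=N$: given $x\notin f(\Sigma)$, the intersection of an inextensible timelike curve through $x$ with $f(\Sigma)$ places $x$ in $I^+(f(\Sigma))$ or $I^-(f(\Sigma))$, say the former; completing any past-inextensible causal curve from $x$ to an inextensible causal curve, it must meet $f(\Sigma)$, and the transitivity of the causal relations together with achronality forces that intersection onto the past part, so $x\in\mathscr{D}^+(f(\Sigma))$. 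Hence $\mathscr{D}(f(\Sigma))=N$, and $f(\Sigma)$ is a Cauchy hypersurface of $N$.
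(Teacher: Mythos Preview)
Your proof is correct and follows essentially the same route as the paper: achronality from Corollary~\ref{cor: immagine di acronale per un Cauchy-plongement}, edgelessness by locality, and intersection with every inextensible causal curve by pulling back through the causally convex image $f(M)$ exactly as in Lemma~\ref{lemma: l'immagine Cauchy-plong. e causalemente convessa}. The only difference is that the paper stops after establishing that $f(\Sigma)$ meets every inextensible causal curve and declares this sufficient, whereas you additionally spell out closedness and the deduction $\mathscr{D}(f(\Sigma))=N$; these are details the paper leaves implicit.
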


\begin{proof}
Let $\Sigma\subset M$ a Cauchy hypersurface. According to Corollary \ref{cor: immagine di acronale per un Cauchy-plongement}, $f(\Sigma)$ is an achronal  hypersurface. Since $f(M)$ is an open neighborhood of $f(\Sigma)$ and  $\Sigma$ is edgeless,  $f(\Sigma)$ is edgeless too. We have to show that $f(\Sigma)$ intersects every inextensible causal curve. Let $c:\mathbb{R}\rightarrow N$ be an inextensible causal curve of  $N$. We know that $c$ intersects $S'$, and this implies $f(M)\cap c(\mathbb{R})\neq \emptyset $. By the previous proof   $(f\vert_{f(M)})^{-1}\circ c: \mathbb{R}\rightarrow M$ is a inextensible causal curve of $M$.  Since $\Sigma$ is a Cauchy hypersurface it intersects this curve. Hence  $c$ intersects $f(\Sigma)$: it shows that $f(\Sigma)$ is a Cauchy hypersurface  of $M'$.
\end{proof}

\begin{lem}\label{primo}
Let $U$ be an open neighborhood of the Cauchy hypersurface $S$ in $M$.
Let $f,g : M \rightarrow N$ be two Cauchy-embeddings such that  $f\vert_U = g\vert_U$. Then $f=g$.
\end{lem}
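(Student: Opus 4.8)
The plan is to reduce the whole statement to the conformal rigidity of Theorem \ref{rigidite}, which is exactly the identity principle designed for this situation. The only features I expect to genuinely use are that $M$ is connected (true for every space-time by definition), that $f$ and $g$ are globally defined conformal maps $M\to N$, and that the dimension is $\geq 3$. The hypothesis that $U$ is a neighborhood of the Cauchy hypersurface $S$ plays no role in the logic; it is there only to match the way the lemma is applied later, and the same argument would work for any nonempty open $U$.

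First I would pick a point $p\in U$. Since $f$ and $g$ coincide on the open set $U$, they coincide on a whole neighborhood of $p$, so all their derivatives at $p$ agree; in particular $j_p^2 f = j_p^2 g$. Then I would apply Theorem \ref{rigidite} to the pair $f,g$ at the point $p$: two conformal maps between pseudo-Riemannian manifolds of dimension $\geq 3$ that share a $2$-jet at a single point coincide. This immediately yields $f=g$ on all of $M$, which is the claim.

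The substance of the argument is therefore not in this lemma but is packaged inside Theorem \ref{rigidite}, namely the fact that in dimension $\geq 3$ a conformal map is rigidly determined by its $2$-jet. Concretely, the mechanism behind the cited theorem is that the coincidence set $\{\,p\in M : j_p^2 f = j_p^2 g\,\}$ is closed (continuity of $2$-jets) and open (local rigidity), so that connectedness of $M$ propagates the agreement from $U$ to the entire space-time. I would stress that the dimension hypothesis $n+1\geq 3$ is essential here, since in dimension $2$ conformal maps are very far from being jet-determined; this is precisely the point where the conformal setting diverges from the isometric one, where a $1$-jet already forces global agreement.
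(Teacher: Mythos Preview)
Your proof is correct and follows the same idea as the paper: reduce to Theorem \ref{rigidite}. The only cosmetic difference is that the paper explicitly writes out the open--closed connectedness argument on the coincidence set $\{p\in M : j_p^2 f = j_p^2 g\}$, whereas you invoke Theorem \ref{rigidite} directly at a single point of $U$; since that theorem is already stated globally, your shortcut is legitimate, and your remark that the hypothesis on $S$ is not actually needed is also correct.
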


\begin{proof}
The set $\mathcal{U}:=\{x\in M \ :\ j_x^2f=j^2_xg\}$ is a closed subset of $M$, which is non-empty since $f$ and $g$ coincide in $U$. By Theorem \ref{rigidite}, $\mathcal{U}$ is also open.  Since $M$ is connected, we have  $\mathcal{U}=M$. 
\end{proof}

\begin{lem}\label{lemma: frontiera Cauchy-plongement}
Let  $f: M\rightarrow N$ be a Cauchy-embedding. The boundary $\partial f(M)$ is the union of two disjoint closed achronal edgeless  subsets  (each possibly empty) $\partial^+ f(M)$ and $\partial^- f(M)$ of $N$ such that
$$ I^-(\partial^+ f(M))\cap I^+(\partial^- f(M))\subset f(M).$$
\end{lem}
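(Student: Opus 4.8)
Write $\Omega:=f(M)$ and $S':=f(S)$. By Lemma \ref{lemma: l'immagine Cauchy-plong. e causalemente convessa}, $\Omega$ is a causally convex open subset of $N$, and $S'\subset\Omega$ is a Cauchy hypersurface of $N$. The plan is to base everything on the identity
\[ \Omega \;=\; I^-(\Omega)\cap I^+(\Omega), \]
which I abbreviate $(\ast)$. The inclusion $\subseteq$ is immediate from openness of $\Omega$: each $x\in\Omega$ has points of $\Omega$ in both $I^+(x)$ and $I^-(x)$. For $\supseteq$, given $x$ with $z<x<y$ for some $z,y\in\Omega$, I would concatenate a timelike curve from $z$ to $x$ with one from $x$ to $y$ to produce a causal curve joining the two points $z,y$ of $\Omega$; causal convexity forces this curve, and hence $x$, into $\Omega$.

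Next I would set $\partial^{\pm}\Omega:=\partial\Omega\cap I^{\pm}(\Omega)$. Disjointness is immediate from $(\ast)$ (a common point would lie in $\Omega$, not on $\partial\Omega$), and $(\ast)$ likewise gives $\partial^{+}\Omega\cap I^-(\Omega)=\emptyset$. That the two pieces exhaust $\partial\Omega$ and are closed I would extract from $S'$ being a Cauchy hypersurface: $N=I^-(S')\sqcup S'\sqcup I^+(S')$, and $\partial\Omega$ misses the open $\Omega\supset S'$. Since $I^{\pm}(S')\subseteq I^{\pm}(\Omega)$, each boundary point lands in $I^+(\Omega)$ or $I^-(\Omega)$; conversely a point of $\partial\Omega\cap I^+(\Omega)$ cannot sit in $I^-(S')$ (else it would lie in $I^-(\Omega)$, contradicting disjointness), so in fact $\partial^{\pm}\Omega=\partial\Omega\cap I^{\pm}(S')$. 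Closedness then follows by separation: a limit of points of $\partial^+\Omega\subset I^+(S')$ lies in the closed set $\partial\Omega$ and, being outside the open set $I^-(S')$, must lie in $\partial^+\Omega$.

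For achronality and the stated inclusion I would isolate one limiting principle: if $w<p$ with $p\in\overline\Omega$, then $w\in I^-(\Omega)$ (approximate $p$ by $y_n\in\Omega$; eventually $y_n\in I^+(w)$), and dually $q<w$ with $q\in\overline\Omega$ gives $w\in I^+(\Omega)$. Achronality of $\partial^+\Omega$ then follows, since $p<q$ with $p,q\in\partial^+\Omega$ would yield $p\in I^-(\Omega)$, contradicting $\partial^+\Omega\cap I^-(\Omega)=\emptyset$. For the inclusion, a point $w\in I^-(\partial^+\Omega)\cap I^+(\partial^-\Omega)$ satisfies $w<p$ and $q<w$ for some $p\in\partial^+\Omega$ and $q\in\partial^-\Omega$, both in $\overline\Omega$; the principle then gives $w\in I^-(\Omega)\cap I^+(\Omega)=\Omega$ by $(\ast)$.

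The delicate point, which I expect to be the main obstacle, is edgelessness. Fixing $x\in\partial^+\Omega$, I would choose an open neighborhood $U$ with $U\subseteq I^+(\Omega)$ (possible since $\partial^+\Omega$ lies in the open set $I^+(\Omega)$) and $U\cap\partial^-\Omega=\emptyset$ (possible since $\partial^-\Omega$ is closed and disjoint from $\partial^+\Omega$), so that $U\cap\partial\Omega=U\cap\partial^+\Omega$. The first condition of Definition \ref{def: edgeless} holds for $U$ because $\partial^+\Omega$ is globally achronal. For the second, a causal curve $\gamma$ in $U$ from $a\in I_U^-(x)$ to $b\in I_U^+(x)$ satisfies $a\in\Omega$ (from $a<x\in\overline\Omega$ we get $a\in I^-(\Omega)$, while $a\in U\subseteq I^+(\Omega)$, so $(\ast)$ applies) and $b\notin\Omega$ (from $x<b$ and $x\notin I^-(\Omega)$ one gets $b\notin I^-(\Omega)\supseteq\Omega$). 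Hence $\gamma$ leaves the open set $\Omega$ and must cross $\partial\Omega$ inside $U$; as $U\cap\partial\Omega=U\cap\partial^+\Omega$, it meets $\partial^+\Omega$. The time-reversed arguments handle $\partial^-\Omega$. The real content is thus the identity $(\ast)$ together with this choice of $U$: it is what forces the two boundary pieces to behave like spacelike graphs over $S'$, after which every asserted property drops out.
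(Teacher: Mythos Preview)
Your proof is correct and follows essentially the same route as the paper: both define $\partial^\pm f(M)$ as the intersection of $\partial f(M)$ with $I^\pm(S')$ (which you show equals $\partial\Omega\cap I^\pm(\Omega)$), and both extract achronality, edgelessness, and the stated inclusion from the causal convexity of $f(M)$ supplied by Lemma~\ref{lemma: l'immagine Cauchy-plong. e causalemente convessa}. Your organization around the single identity $\Omega=I^-(\Omega)\cap I^+(\Omega)$ is a tidy repackaging of the paper's steps (1), (1$'$), and (2), and your edgelessness verification is spelled out more fully than the paper's rather terse one, but the underlying argument is the same.
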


\begin{proof}
Let $S$ be a Cauchy spacelike hypersurface of $M$. We identify  $M$  with its image in $N$ by $f$; in particular, we consider $S$ as a Cauchy hypersurface of $N$. Let $N^\pm:=I^\pm(S)\cap N$ (where $I^\pm(S)$ denote the future/past of $S$ in $N$) and let $M^\pm:=N^\pm \cap M$.
The boundary $\partial M$ is then the disjoint union of $\partial^+M:=\partial M\cap N^+$  and $\partial^-M:=\partial M\cap N^-$. \\
\em $1)$ For every point $p\in \partial^+M$ we have $I^-(p)\cap N^+\subset M^+$. \em \\
Let  $q\in I^-(p)\cap N^+$. There exists a past causal curve $c$ between $q$ and a point $z\in S$. For every $w$ sufficiently close to $p$ we have  $q\in I^-(w)$. Since $p$ lies in the boundary of $M^+$, we can select such a $w$ in $M^+$. Then, there exists a past causal curve going from  $w$  to $z$ through $q$. Since $M$ is causally convex in $N$ (Lemma \ref{lemma: l'immagine Cauchy-plong. e causalemente convessa}) we obtain  $q\in M^+$.\\
Reversing the time orientation we have also proved:\\
\em $1')$ For every point $p\in \partial^-M$ : $I^+(p)\cap N^-\subset M^-$. \em   In particular,
$$ I^-(\partial^+ f(M))\cap I^+(\partial^- f(M))\subset f(M).$$
\em $2)$  $I^+(\partial^+M) \cap M$  is empty. \em \\
Assume by contradiction that there is an element $x$ of  $I^+(\partial^+M) \cap M$. There exists a past causal curve $c$ between  $x$ and a point  $y$ in $ \partial^+M$. Extend $c$ to an inextensible (in $N$) past causal curve $c'$. Then $c'$ intersects $S$ at a point  $z$. Since $M$ is causally convex in  $N$, consequently $y\in M$, contradicting the hypothesis.\\
\em $3)$ $\partial^+M$ is achronal. \em \\
Let $c$ be a timelike future curve linking two points $x<y$ of $\partial^+M$.  Then $I^+(x)$ is an open neighborhood of $y$, and by $2)$ it is disjoint from $M$: but this contradicts the fact that $y$ lies in the closure of $M$.\\
\em $4)$ $\partial^+M$ is edgeless. \em \\
  Every causal curve between  a point in $N^+\setminus M^+$ and a point in $M^+$ intersects $\partial^+M$. By $2)$ we have that $\partial^+M$ is edgeless.\\
Reversing the time orientation we have the same results for  $\partial^- M$.
\end{proof}

\subsection{Existence and uniqueness of the $C_0$-maximum extension}

Let $M$ be a globally hyperbolic space-time of dimension $n+1\geq 3$, and let $\phi: \Sigma \rightarrow M$ be a conformal embedding of a Riemannian manifold $\Sigma$ in $M$ such that $\phi(\Sigma)$ is a Cauchy hypersurface of $M$.
Let $\mathcal{F}$ be the set of triples $(N,\psi,f)$, where:

\begin{itemize} 
\item  $N$ is a globally hyperbolic space-time,
\item $\psi:  \Sigma \rightarrow  N$ is a conformal embedding such that $\psi(\Sigma)$ is a Cauchy hypersurface of $N$,
\item $f: M\rightarrow N$ is a conformal Cauchy-embedding such that $f\circ \phi=\psi$.
\end{itemize}

\noindent We can define the following relation over $\mathcal{F}$:
  \begin{eqnarray*}
 (N,\psi,f)\preceq  (N',\psi',f') &  \iff  \ \exists \  h: N \rightarrow N'  \text{ conformal embedding}  \\
  &\text{such that } h\circ\psi=\psi'. 
\end{eqnarray*}

\noindent The fact that $h\circ\psi=\psi'$ implies  $h\circ f=f'$. Moreover, by Lemma \ref{primo}, if $(N,\psi,f)\preceq (N',\psi',f')$,   then the Cauchy-embedding $h: N \rightarrow N'$ such that $h\circ\psi=\psi'$ is unique.\\
The relation $\preceq$ is clearly reflexive and transitive, but not antisymmetric. Nevertheless,  if $(N,\psi, f)\preceq (N',\psi', f')$ and $(N',\psi', f')\preceq (N,\psi, f)$ then there are two Cauchy-embeddings  $h: N \to N'$ and $h': N' \to N$ such that $h\circ\psi=\psi'$ and $h'\circ\psi'=\psi$. The restriction of $h\circ h'$ to  the Cauchy hypersurface  $\psi(\Sigma)$ is the identity, hence, by Lemma \ref{primo}, $h \circ h' $ is the identity map on $N$.
  Similarly, $h' \circ h$ is the identity map of $N'$. We have proved that $N$ and $N'$ are conformally diffeomorphic.\\ In order to obtain a partial ordered set we consider over $\mathcal{F}$ the relation:
\begin{eqnarray*}
(N,\psi,f)\simeq (N',\psi',f') & \iff  (N,\psi,f)\preceq (N',\psi',f') \text{ and }\\ 
&(N',\psi',f')\preceq (N,\psi,f)
\end{eqnarray*}
This is an equivalence relation on $\mathcal{F}$.  For every element $(N,\psi,f)$ of $\mathcal F$, we denote by $[N,\psi,f]$ the equivalence class of $(N,\psi,f)$. Let $\overline{\mathcal F}$ be the quotient set $\mathcal F/ \simeq$. The relation $\preceq$ induces a partial order on $\overline{\mathcal F}$. Observe
  that $[M, \phi, Id]$ is a minimum, i.e. it minorates every element of $\overline{\mathcal F}$.\\
We are going to show that every totally ordered subset in  $(\overline{\mathcal F}, \preceq)$ has  an upper bound  in $\overline{\mathcal F}$. Then, by Zorn's Lemma, $\overline{\mathcal F}$ contains at least one maximal element for the order relation $\preceq$. Any representative  in $\mathcal F$ of this element will be a \emph{maximal conformally flat extension} of $M$.\\
Let $\{[M_i,\psi_i, f_i] \}_{i\in I}$ be a totally ordered subset of $\overline{\mathcal F}$. We can assume, without loss of generality, that $I$ contains a minimal element, denoted by $0$, such that $M_0=M$, $f_0=Id$ and $\psi_0=\phi$. If $i<j$, let  $h_{i,j}: M_i\rightarrow M_j$ be the unique Cauchy-embedding such that $h_{i,j}\circ\psi_i=\psi_j$.    By Lemma \ref{primo},  $h_{i,k}\circ h_{k,j}=h_{i,j}$ and $h_{i,i}=id$ for all $i\leq j\leq k$; moreover $h_{0,i}=f_i$, for all $i\in I$.\\
Let  $$\mathcal{M}:=\bigsqcup_{i\in I}M_i.$$
We consider the following relation on $\mathcal{M}$:  given  $x\in M_i$ and  $y\in M_j$ then 

\begin{displaymath}
 x\sim y  \ \Leftrightarrow \ \left\{ \begin{array}{ll}
i\leq j &  h_{i,j}(x)=y\\
 \textrm{or}\\
j< i & h_{j,i}(y)=x
\end{array} \right.
\end{displaymath}

\noindent The foregoing shows that $\sim$ is an equivalence relation. Let 
$$\overline{M}:=\mathcal{M}/\sim$$
equipped with the quotient topology. We want to show that $\overline{M}$ is an element of $\mathcal{F}$.\\
Let $p_i: M_i\rightarrow \overline{M}$ the composition of the inclusion $M_i\subset \mathcal{M}$ with  projection to the quotient $\pi:\mathcal{M}\rightarrow \overline{M}$. If $i<j$ we have $p_i(M_i)\subset p_j(M_j)$.

\begin{lem}\label{pi homeo}
Every $p_i$ is a homeomorphism onto its image.
\end{lem}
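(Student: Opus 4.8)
The plan is to establish, in turn, the three properties that make $p_i$ a homeomorphism onto its image --- injectivity, continuity, and openness --- the last being the only delicate one.

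Continuity and injectivity are immediate. The map $p_i$ is the composite of the inclusion $M_i\hookrightarrow\mathcal{M}$ (continuous, since $M_i$ is a clopen summand of the disjoint union) with the quotient projection $\pi$ (continuous by definition of the quotient topology), hence continuous. For injectivity, suppose $x,x'\in M_i$ with $p_i(x)=p_i(x')$, that is $x\sim x'$; applying the defining relation with the index inequality $i\leq i$ and using $h_{i,i}=\mathrm{id}$ forces $x=x'$.

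The heart of the argument is to show that $p_i$ is \emph{open} as a map into $\overline{M}$; this simultaneously proves that $p_i(M_i)$ is open and that $p_i^{-1}$ is continuous. First I would fix an open set $O\subseteq M_i$ and describe its $\pi$-saturation slice by slice. Because $I$ is totally ordered and $\sim$ is already an equivalence relation (as observed just before the statement, thanks to the compatibility relations among the $h_{i,j}$), a point $y\in M_j$ is equivalent to some $x\in O$ exactly when it is related to such an $x$ by a single $h$. This gives
\[
\pi^{-1}(\pi(O))\cap M_j=\begin{cases}
h_{j,i}^{-1}(O), & j<i,\\
O, & j=i,\\
h_{i,j}(O), & j>i.
\end{cases}
\]
Each slice is open in $M_j$: for $j<i$ because $h_{j,i}$ is continuous; for $j=i$ trivially; and for $j>i$ because, by Lemma \ref{lemma: l'immagine Cauchy-plong. e causalemente convessa}, the image of the Cauchy-embedding $h_{i,j}$ is an open subset of $M_j$, so $h_{i,j}$ is an \emph{open} embedding and carries the open set $O$ to an open set. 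Hence $\pi^{-1}(\pi(O))$ is open in $\mathcal{M}$, and by the definition of the quotient topology $p_i(O)=\pi(O)$ is open in $\overline{M}$.

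Combining the three points, $p_i$ is a continuous open injection, hence a homeomorphism onto its (open) image. I expect the main obstacle to be the openness in the case $j>i$: it is precisely here that one must invoke that a conformal Cauchy-embedding has open image (Lemma \ref{lemma: l'immagine Cauchy-plong. e causalemente convessa}), rather than merely that $h_{i,j}$ is a continuous injection. A secondary point needing care is the slicewise formula for the saturation, which rests on $\sim$ being genuinely an equivalence relation --- so that no iterated chains of the $h_{i,j}$ arise --- together with the total ordering of $I$, which guarantees every $j$ is comparable to $i$.
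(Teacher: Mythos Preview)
Your proof is correct and follows essentially the same route as the paper: continuity from the quotient topology, and openness by computing the $\pi$-saturation slice by slice and observing each slice is open because the $h_{i,j}$ are open embeddings. You are in fact more careful than the paper---you treat injectivity explicitly and give the saturation formula for a general open $O$ rather than just for $M_i$---and your citation of Lemma~\ref{lemma: l'immagine Cauchy-plong. e causalemente convessa} for the openness of $h_{i,j}(O)$ is legitimate (the lemma's statement includes ``open''), though note that openness here already follows from $h_{i,j}$ being a conformal embedding between equidimensional manifolds, without any causal input.
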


\begin{proof}
By definition of quotient  topology, $p_i$ is continuous. Let  $U\subset M_i$ be an open set, then $p_i(U)$ is open in $\overline{M}$ if and only if $\pi^{-1}(p_i(U))$ is open in $\mathcal{M}$. We have:
$$\pi^{-1}(p_i(M_i))=\left(\bigsqcup_{j\in I:\ i< j}h_{i,j}(M_i)\right)\sqcup \left(\bigsqcup_{j\in I:j\leq i }M_j\right)$$
 Moreover every $h_{i,j}(M_i)$ is open in  $M_j$ because $h_{i,j}$ is an embedding. It is  then clear that  $\pi^{-1}(p_i(U))$ is open in $\mathcal{M}$.
\end{proof}

  The delicate point in the proof that $\overline{M}$ lies in $\mathcal{F}$ is to show is that $\overline{M}$ is a manifold, in particular that it is a second-countable topological space. This is not trivial because $I$ is not countable in general.


\begin{prop}\label{max espace-temps}
$\overline{M}$ is a conformally flat space-time.
\end{prop}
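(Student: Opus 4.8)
The plan is to verify the defining properties of a conformally flat space-time for $\overline{M}$ one at a time, the whole difficulty being concentrated in the last one, second countability. Throughout I would use that each $p_i\colon M_i\to\overline{M}$ is a homeomorphism onto an open set (Lemma~\ref{pi homeo}), that these images are nested, $p_i(M_i)\subset p_j(M_j)$ for $i<j$, that they cover $\overline{M}$, and that $p_i=p_j\circ h_{i,j}$ whenever $i\le j$.

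\emph{Easy parts.} First I would transport the conformal structure through the charts $p_i$: on $p_i(M_i)$ set $[g]:=(p_i^{-1})^*[g_i]$, where $[g_i]$ is the conformal class of $M_i$. Since $h_{i,j}$ is a conformal embedding one has $h_{i,j}^*[g_j]=[g_i]$, so on the overlap $p_i(M_i)\subset p_j(M_j)$ the prescriptions coming from $i$ and $j$ agree, and they glue to a global conformal class of Lorentzian metrics on $\overline{M}$. It is conformally flat because near any point the chart $p_i^{-1}$ identifies a neighborhood with an open subset of $M_i$, which is locally modeled on $\E_{1,n}$. The remaining topological properties are equally direct: $\overline{M}$ is connected, being a union of the connected nested sets $p_i(M_i)$, all containing $p_0(M)$; it is orientable and time-orientable, the orientation and time-orientation being transported from $M_0$ by the $f_i=h_{0,i}$ and agreeing on overlaps because the morphisms $h_{i,j}$ preserve both. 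Hausdorffness is the first place the total order pays off: given distinct points $p_i(x)$ and $p_k(y)$, set $j:=\max(i,k)$; then both lie in the single open set $p_j(M_j)$, which is homeomorphic to the Hausdorff manifold $M_j$, so they can be separated by open sets of $p_j(M_j)$, hence of $\overline{M}$.

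\emph{The main obstacle.} At this stage $\overline{M}$ is a connected, Hausdorff, locally Euclidean, oriented and time-oriented space carrying a conformally flat Lorentzian conformal class; the only missing axiom, and the hard part, is that it has a countable basis, as required by the definition of a Lorentzian manifold. This is genuinely delicate because $I$ need not be countable: an increasing union of second-countable open sets over an uncountable totally ordered index set need not be second countable (the long line is the model counterexample), and the pathology occurs exactly when the chain $\{p_i(M_i)\}$ has uncountable cofinality. The goal is therefore to extract a countable cofinal subfamily $p_{i_1}(M_{i_1})\subset p_{i_2}(M_{i_2})\subset\cdots$ exhausting $\overline{M}$: once this is done, $\overline{M}$ is a countable union of second-countable open sets, hence second countable, and together with the previous paragraph this makes $\overline{M}$ a conformally flat space-time.

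To control the cofinality I would exploit the rigid feature of the construction: all the Cauchy hypersurfaces $\psi_i(\Sigma)$ have the \emph{same} image $\bar{\Sigma}\cong\Sigma$ in $\overline{M}$, and each $M_i$ is the Cauchy development of this fixed, second-countable surface, with compact causal diamonds. Morally $\overline{M}$ is $\bar{\Sigma}$ times a time direction, and the content is that this time direction remains second countable. Concretely I would fix a compact exhaustion $\Sigma=\bigcup_n K_n$ and show that each point of $\overline{M}$ lies in a compact causal diamond based on $\bar{\Sigma}$ which, by the finite-cover-plus-nesting argument used for Hausdorffness, already sits inside a single $p_i(M_i)$; iterating over the $K_n$ should produce countably many indices whose images exhaust $\overline{M}$. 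Making this exhaustion argument airtight, i.e. proving that the common Cauchy surface together with the global hyperbolicity of the $M_i$ forbids a long-line-type time direction, is precisely where I expect the real work to lie.
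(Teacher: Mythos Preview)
Your treatment of Hausdorffness, local Euclideanness, the gluing of the conformal class, and the orientations is correct and matches the paper. The gap is in the second-countability step.

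Your proposed route---extract a countable cofinal subchain via a causal exhaustion built on a compact exhaustion of $\Sigma$---is not carried out, and the sketch does not close. The difficulty you name at the end is genuine: an inextensible timelike curve through a point of $\bar\Sigma$ is, in $\overline M$, a nested union over $I$ of open intervals all containing a fixed point, and such a union can be a long line; your ``compact diamond in a single $p_i(M_i)$'' observation is true but does not bound the indices needed as the point varies, so the iteration over the $K_n$ produces nothing. More tellingly, your outline never uses the conformal flatness of the $M_i$ at this step, whereas the paper's own Remark~\ref{oss: extension conforme} explicitly identifies second countability of $\overline M$ as the place where the conformally flat hypothesis is indispensable and where the whole construction stalls in the general conformal category.

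The paper's mechanism is different and exploits the $(G,X)$-structure directly. One first reduces to $M$ simply connected (then every $M_i$ is, by Geroch); each $M_i$ carries a developing map $d_i:M_i\to\E_{1,n}$, and for $i<j$ there is a unique $g_{i,j}$ in the structure group with $d_j\circ h_{i,j}=g_{i,j}\circ d_i$. Normalizing by $g_{0,i}^{-1}$, the maps $d_i\circ p_i^{-1}$ glue to a single well-defined local homeomorphism $\overline d:\overline M\to\E_{1,n}$. Pulling back any Riemannian metric from the target yields a Riemannian metric on $\overline M$ with no partition of unity needed; hence $\overline M$ is metrizable and, being connected and locally Euclidean, second countable. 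The non-simply-connected case is handled by lifting the entire chain to universal covers, applying the above, and recognizing $\overline M$ as the quotient by the properly discontinuous $\pi_1(\Sigma)$-action. The ingredient you are missing is that the rigid model $\E_{1,n}$ furnishes a single second-countable target absorbing the whole chain at once; your argument has no analogue of this.
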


\begin{proof}
According to Lemma \ref{pi homeo}, every point $p$ of $\overline{M}$ is contained in a neighborhood homeomorphic to $M_i$ for some $i$, hence $p$ has a neighborhood homeomorphic to  $\mathbb{R}^n$. Moreover, every pair of points $q_1, q_2$ of $\overline{M}$ is contained in the same $p_i(M_i)$ for some $i\in I$. By Lemma \ref{pi homeo}, $q_1, q_2$ have two disjoint neighborhoods, so the space $\overline{M}$ is a Hausdorff topological space. To conclude the proof  we have to show that  $\overline{M}$ is second-countable and that it is endowed with a conformally flat Lorentzian metric.\\
We first consider the case where $M$ is simply connected.
According to Theorem \ref{teo: Geroch}, the topology of any   globally hyperbolic space-time is determined by the topology of its Cauchy  hypersurfaces. Hence, in our case, every space-time in $\mathcal{F}$ is simply connected. Therefore for every  $M_i$ in  $\mathcal{F}$ there is a developing map $d_i :M_i\rightarrow Ein_{1,n}$.\\
Let $i<j$ be two elements of $I$. The map $d_j\circ h_{i,j}: M_i\to \E_{1,n}$ is another developing map for $M_i$. Therefore there is a unique $g_{i,j}$ in $O(2,n)$ such that $d_j\circ h_{i,j}=g_{i,j}\circ d_i$. We can then define a map $\overline{d}: \overline{M}\rightarrow Ein_{1,n}$  by:

\begin{displaymath}
\overline{d}(x)= \left\{ \begin{array}{ll}
d_0\circ p_0^{-1}(x) &  \text{\ if\ }x\in p_0(M) \\
 \textrm{\ }\\
(g_{0,i})^{-1} \circ d_i\circ p_i^{-1} (x) & \text{\ if\ }x\in p_i(M_i)
\end{array} \right.
\end{displaymath}

\noindent First we have to show that $\overline{d}$ is a well-defined map. Let $x$ be a point in $\overline{M}$; $x$ is contained in  $p_i(M_i)$ for some $i$. Let $j$ such that  $i<j$; then $p_i(M_i)\subset  p_j(M_j)$. Since $h_{0,j}=h_{i,j}\circ h_{0,i}$ we obtain $g_{0,j}=g_{i,j}\circ g_{0,i}$. Then
\begin{align*}
(g_{0,j})^{-1} \circ d_j\circ p_j^{-1} (x)= &(g_{i,j}\circ g_{0,i})^{-1} \circ d_j\circ p_j^{-1} (x)\\
=&(g_{0,i})^{-1}\circ(g_{i,j})^{-1} \circ d_j\circ p_j^{-1} (x)\\
=&(g_{0,i})^{-1}\circ(g_{i,j})^{-1} \circ d_j\circ h_{i,j}\circ (p_i)^{-1} (x)\\
=&(g_{0,i})^{-1} \circ d_i\circ p_i^{-1} (x)
\end{align*}
\noindent the last equality being true because $p_j=(h_{i,j})^{-1}\circ p_i$ and $(g_{i,j})^{-1}\circ d_j\circ h_{i,j}=d_i$. \\
 The map $\overline{d}$ is well-defined and, by construction,  a local homeomorphism.\\
The pull-back  by  $\overline{d}$ of any Riemannian metric over  $Ein_{1,n}$ defines a Riemannian metric over $\overline{M}$: then the open balls for this metric on $\overline{M}$ give a countable basis for the topology, so $\overline{M}$ is a second-countable topological space.  Moreover the map $\overline{d}$ defines a conformally flat Lorentzian structure on $\overline{M}$. Since the map $h_{i,j}$ preserves the orientation and the chronological orientation,  the map $g_{j,i}$ is an element of $O_0(2,n)$, for all $i\leq j$. It implies that $\overline{M}$ is  chronologically oriented. We have proved that, when $M$ is simply connected, $\overline{M}$ is a conformally flat space-time.\\
We can now show the theorem in the general case, when $M$ is not necessarily simply connected.
First we prove that the universal covering  $\tilde{M}$  of $M$ has a naturally defined space-time structure: the lifting by the covering map $\pi: \M\to M$ of the causal structure of $M$.

\begin{lem}\label{lemma: releve dell'ipersuperficie di Cauchy}
Let $M$ be a globally  hyperbolic space-time and let $S$ be a Cauchy hypersurface of $M$. Let $\M$ be the  universal covering of $M$. Then every lift $\tilde{S}$ of $S$ is a Cauchy hypersurface of $\M$.
\end{lem}

\begin{proof}
The covering map $\pi: \M \to M$ is a local diffeomorphism which preserves the causal structures of $\M$ and $M$. In particular, causal curves in $\M$ are precisely lifts of causal curves in $M$. It follows that $\tilde{S}$ is a locally achronal embedded hypersurface of $\M$. If $c$ is a timelike curve intersecting $\tilde{S}$ twice, then the projection $\pi\circ c$ intersects $S$ twice: it is impossible since $S$ is achronal; therefore, $\tilde S$ is also achronal in $\M$. Moreover $\tilde{S}$ is edgeless because this is a local property.\\
Let $\alpha:\R\to \M$ an inextensible causal curve of $\M$.  The map $\pi\circ \alpha$ is an immersion such that the image of every  vector which is tangent to $\alpha$ is a causal vector of $M$. By Definition \ref{def: GH diamante compatto}, $M$ is strongly causal, then $\pi\circ \alpha$ must  be injective and not self-accumulating. This means that $\pi\circ \alpha$ is an embedding. The map $\pi\circ \alpha$ is then an inextensible causal curve of $M$. Since $S$ is a Cauchy hypersurface of $M$, the curve $\pi\circ \alpha$ intersects $S$. This implies that  $\alpha$  intersects $\tilde{S}$, hence $\tilde{S}$ is edgeless and a Cauchy hypersurface of $\M$.
\end{proof}

\noindent Let $\tilde{h}_{i,j}:\tilde{M_i}\rightarrow \tilde{M_j}$ be the conformal embedding which lifts the map  $h_{i,j}$, where  $\tilde{M_i}$, $\tilde{M_j}$ are the universal coverings of $M_i,M_j \in\mathcal{F}$ with $i<j$. According to  Lemma  \ref{lemma: releve dell'ipersuperficie di Cauchy} the lift of every Cauchy hypersurface $S_i$ of $M_i$ is a Cauchy hypersurface  $\tilde{S}_i$ of $\tilde{M}_i$. Therefore, the maps $h_{i,j}$ are conformal Cauchy-embeddings. The following diagram commutes: 

$$\tilde{\Sigma}\xrightarrow{\tilde{\psi_0}}\tilde{M_0}\xrightarrow{\tilde{h}_{0,i}} \tilde{M_i}\xrightarrow{\tilde{h}_{i,j}} \tilde{M_j}\longrightarrow \cdots$$
$$ \ \ \ \ \ \ \ \ \  q_0 \downarrow  \ \ \ \ \ \ \ \ \ q_i \downarrow \ \ \ \ \ \ \ q_j \downarrow\ \ \ \ \ \ \ \ \ \ \ \  $$
$$\Sigma \xrightarrow{\psi_0}M_0\xrightarrow{h_{0,i}} M_i\xrightarrow{h_{i,j}} M_j\longrightarrow \cdots$$

\noindent where $q_k$ are the covering maps. By the same process used in the definition of  $\overline{M}$ we can define  a space-time $\overline{\tilde{M}}$ which now is second-contable and equipped with a  naturally defined conformally flat space-time structure. Let  $\overline{d}$ be the developing map of $\overline{\tilde{M}}$ and let $\tilde{p}_i:\tilde{M}_i\to \overline{\tilde{M}}$ be the continuous and open maps given by Lemma \ref{pi homeo}.
We define the map $p:\overline{\tilde{M}}\rightarrow\overline{M}$ as
$$p(x)=p_i\circ q_i\circ \tilde{p}_i^{-1}(x)$$ 
where $x\in \tilde{p}_i(\tilde{M}_i)$. 
 This definition is independent to the choice of the map $\tilde{p}_i$. Indeed, if $i<j$,  for every $x$ in $\tilde{p}_i(\tilde{M}_i)$  we have $\tilde{p}_i=\tilde{p}_j\circ\tilde{h}_{i,j}$ and $p_i\circ q_i=p_j\circ q_j\circ \tilde{h}_{i,j}$, which implies:
$$p_i\circ q_i\circ \tilde{p}_i^{-1}(x)=p_j\circ q_j\circ \tilde{p}_j^{-1}(x).$$
The map $p$ is a local diffeomorphism since it is the composition of local diffeomorphisms. We want to show that $p$ is a covering map.
Let $\Gamma:=\pi_1(\Sigma)$. The group $\Gamma$ acts over  $\tilde{M_i}$ in such a way that for every $\gamma$ in $\Gamma$, $q_i \circ \gamma=q_i$ and for every $i,j$ in $I$  $\tilde{h}_{i,j}\circ\gamma=\gamma\circ \tilde{h}_{i,j}$. Then we can define an action of $\Gamma$ over $\overline{\tilde{M}}$ by:
$$ \gamma(x) :=\tilde{p}_i\circ\gamma\circ \tilde{p}_i^{-1}(x) $$
where $\forall x \in \tilde{p}_i(\tilde{M}_i)$. This action is well-defined: if $i<j$, then
$$\tilde{p}_i\circ\gamma \circ \tilde{p}_i^{-1}(x)=\tilde{p}_j\circ\tilde{h}_{i,j}\circ\gamma \circ \tilde{h}_{i,j}^{-1}\circ \tilde{p}_j^{-1}(x)=\tilde{p}_j\circ\gamma \circ \tilde{p}_j^{-1}(x).$$
  By construction, $p\circ\gamma=p$. Moreover, for every $x,y$ in $\overline{\tilde{M}}$ we have $p(x)= p(y)$ if and only if there is an element $\gamma$ of $\Gamma$ such that $x=\gamma(y)$.
This action is proper and discontinuous since $\Gamma$ acts properly and discontinuously over every $\tilde{p}_i(\tilde{M}_i)$. Then $\overline{M}$ is the quotient of $\overline{\tilde{M}}$ by $\Gamma$ and $p$ is the projection to the quotient.\\
Since $\overline{M}$ is the quotient of a second-countable manifold by a  proper and discontinuous action, it is also a second-countable manifold. Moreover, since the maps $d_i$ are equivariant for the action of $\Gamma$ over $\tilde{M}_i$, the local diffeomorphism  $\overline{d}$ is also equivariant for the action of $\Gamma$.  Then there is a well-defined conformally flat space-time structure over $\overline{M}$.
\end{proof}

\noindent Now that we have shown that $\overline{M}$ is a space-time, we can study his causal structure. We want to prove that it is globally hyperbolic with Cauchy hypersurface $p_0(S_0)$.

\begin{lem}
Every $p_i(M_i)$ is causally convex inside $\overline{M}$.
\end{lem}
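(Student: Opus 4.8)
The plan is to reduce the causal convexity of $p_i(M_i)$ inside $\overline{M}$ to the causal convexity of the image of a single Cauchy-embedding, which is already available from Lemma \ref{lemma: l'immagine Cauchy-plong. e causalemente convessa}. Two structural facts drive the argument. First, by construction of $\overline{M}$ one has $p_i = p_j \circ h_{i,j}$ whenever $i < j$, so that $p_i(M_i) = p_j(h_{i,j}(M_i))$ and the family $\{p_j(M_j)\}_{j \in I}$ is an increasing open cover of $\overline{M}$. Second, each $p_j$ is not merely a homeomorphism onto its image (Lemma \ref{pi homeo}) but a conformal embedding: indeed the conformal structure of $\overline{M}$ is defined through $\overline{d}$, and $\overline{d} \circ p_j = (g_{0,j})^{-1} \circ d_j$, where $d_j$ is the developing map of $M_j$ and $g_{0,j}$ is a conformal transformation of the Einstein space-time, hence preserves its conformal class. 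Since the causal structure is a conformal invariant, each $p_j$ maps causal curves to causal curves and pulls causal curves back to causal curves.

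First I would fix $x, y \in p_i(M_i)$ together with a causal curve $\gamma : [0,1] \to \overline{M}$ from $x$ to $y$, and use compactness to absorb it into a single member of the cover. The image $\gamma([0,1])$ is compact, so it is covered by finitely many sets $p_{j_1}(M_{j_1}), \dots, p_{j_m}(M_{j_m})$; since $I$ is totally ordered and the $p_j(M_j)$ are nested, setting $k := \max(i, j_1, \dots, j_m)$ yields an index with $\gamma([0,1]) \cup p_i(M_i) \subset p_k(M_k)$.

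Then I would transport the problem into $M_k$. The curve $C := p_k^{-1} \circ \gamma$ is a causal curve in $M_k$ (because $p_k$ is a conformal embedding), whose endpoints lie in $p_k^{-1}(p_i(M_i)) = h_{i,k}(M_i)$. Since $h_{i,k} : M_i \to M_k$ is a conformal Cauchy-embedding, Lemma \ref{lemma: l'immagine Cauchy-plong. e causalemente convessa} gives that $h_{i,k}(M_i)$ is causally convex in $M_k$; hence $C$ is entirely contained in $h_{i,k}(M_i)$. Applying $p_k$ again we conclude $\gamma([0,1]) \subset p_k(h_{i,k}(M_i)) = p_i(M_i)$, which is exactly causal convexity. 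The only delicate points are the verification that each $p_j$ preserves the causal structure, so that causal curves can be pushed forward and pulled back freely, and the passage, via compactness together with the total order on $I$, to a single index $k$; once these are in place the conclusion follows immediately from the already established convexity of the image of $h_{i,k}$.
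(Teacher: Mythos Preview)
Your proposal is correct and follows essentially the same route as the paper's own proof: absorb the compact causal curve into a single $p_k(M_k)$ using the total order on $I$, then pull back via $p_k$ and invoke Lemma~\ref{lemma: l'immagine Cauchy-plong. e causalemente convessa} for the Cauchy-embedding $h_{i,k}$. The paper is slightly more terse (it asserts directly the existence of a single index $j$ covering the curve, and simply recalls that $p_j$ is a conformal embedding), but the logical structure is identical.
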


\begin{proof}
Let $c: [0,1]\rightarrow \overline{M}$ be a causal curve between two points of $p_i(M_i)$. The set $c([0,1])$ is a compact subset of $\overline{M}$. Since $\overline{M}$ is the growing union of the sets $p_k(M_k)$ and since $I$ is a totally ordered set,  there exists  $j\in I$ such that $c([0,1])\subset p_j(M_j)$.  We suppose $i\leq j$ (up to  replacing $j$ by $i$ if $j<i$). The open set $p_i(M_i)$ is causally convex in $p_j(M_j)$ since $h_{i,j}(M_i)$ is causally convex in $M_j$ and because of Lemma  \ref{lemma: l'immagine Cauchy-plong. e causalemente convessa}. (Recall that $p_j$ is a conformal embedding.) Thus the image of $c$ is contained in $p_i(M_i)$.
\end{proof}

\noindent The image $p_i(S_i)$ does not depend on $i$, it is a spacelike hypersurface inside $\overline{M}$ that we denote by $\overline{S}$.

\begin{lem}
$\overline{S}$ is a Cauchy hypersurface  of $\overline{M}$. In particular, $\overline{M}$ is globally hyperbolic.
\end{lem}

\begin{proof}
 Let  $c$ be a timelike curve between two points of  $\overline{S}$. Since $c$ is compact, there exists $i\in I$ such that $c$ is contained in $p_i(M_i)$. This contradicts the fact that  $\overline{S}=p_i(S_i)$  is achronal in every $p_i(M_i)$. Thus $\overline{S}$ is achronal.\\
 The hypersurface $\overline{S}$ is edgeless because this is a local property and $\overline{S}$ is edgeless in every $p_i(M_i)$. Every point $p$ of $\overline{M}$ is contained in $p_i(M_i)$ for some $i\in I$, which is globally hyperbolic
 with Cauchy hypersurface $\tilde{S}$. Hence $p$ is contained in the Cauchy development of $\overline{S}$ in $\overline{M}$. This proves that $\overline{S}$ is a Cauchy hypersurface  of $\overline{M}$.
\end{proof}

\noindent We have proved that $\overline{M}$ is an element of $\mathcal{F}$. Moreover, for every $i\in I$,
the map $p_i$ is a Cauchy-embedding of $M_i$ into $\overline{M}$. Let $\bar{f}$ be the Cauchy-embedding given by $p_0$ and $\bar{\psi}$  the composition of $\phi$ and the  restriction of $\bar{f}$ to $S$, then he element $[\overline{M}, \bar{\psi}, \bar{f}]$ of $\overline{\mathcal F}$
and  is  an upper bound  for the totally ordered set $\{[M_i,\psi_i, f_i] \}_{i\in I}$. By  Zorn's Lemma, $\overline{\mathcal F}$ has at least one maximal element. It follows that any representative in $\mathcal F$ of this maximal element is a  maximal conformally flat extension of $M$.\\

  Now we have to prove the uniqueness of the maximal extension. That is: up to conformal diffeomorphism there is a unique maximal element in $\overline{\mathcal F}$.\\
Let $F_1: M_0\rightarrow M_1^{max}$ and $F_2: M_0\rightarrow M_1^{max}$ be two Cauchy-embeddings of $M_0$ in two maximal extensions: we want to prove that $M_1^{max}$ and $M_2^{max}$ are conformally equivalent.\\
Let  $\mathcal H$ be the set of  quadruples $(M,f,g_1,g_2)$ such that $f: M_0\rightarrow M$, $g_1: M\rightarrow M^{max}_1$ and $g_2: M\rightarrow M^{max}_2$ are Cauchy-embeddings, where   $F_i=g_i\circ f$ for $i=1,2$.
Over $\mathcal H$ we define the  relation
\begin{eqnarray*}
 (M,f,g_1,g_2)\preceq(M',f',g'_1,g'_2) \ \iff \exists \Phi:M\rightarrow M' \text{ Cauchy-embedding }\\
 \text{such that } f'=\Phi\circ f \text{ and } g_i=  g_i'\circ\Phi
\end{eqnarray*}
where $i=1,2$. This relation leads to a partial order  over the  quotient $\overline{\mathcal H}$ by the equivalence relation, which identifies two  quadruples
  $(M,f,g_1,g_2)$ and $(M',f',g'_1,g'_2)$ if the Cauchy-embedding $\Phi$ is surjective. The elements of $\overline{\mathcal H}$ are denoted by  $[M,f,g_1,g_2]$. \\
Just like in the proof of the maximal extension's existence, we prove that $(\overline{\mathcal H},\preceq)$ is inductive: given a totally ordered set $\{[M^k,f^k,g^k_1,g^k_2]\}_{k\in I}$ of $\overline{\mathcal H}$, we consider the quotient $\overline{M}$ of the disjoint union of every  $M^k$  the equivalence relation which identifies every $\Phi_{k,l}(x)$ with $x$ if $k < l$, where $\Phi_{k,l}(x)$ is the unique Cauchy-embedding such that $f^l=\Phi_{k,l}\circ f^k \text{ and } g_i^k=  g_i^i\circ\Phi_{k,l} $ .\\
The maps $g_1^k$, $g_2^k$ and $f_k$ are  compatible with this relation and then they induce, at the quotient level,  the Cauchy-embeddings $\overline{g}_i :\overline{M}\rightarrow M^{max}_i$, where $i=1,2$, and $\overline{f}:M_0\to \overline{M}$. As in the existence  proof, one shows that this quotient is a conformally globally hyperbolic space-time, which gives an upper bound  for the set $\{[M^k,f^k,g^k_1,g^k_2]\}_{k\in I}$ (see Figure \ref{fig:unicita}).\\
Once more, Zorn's Lemma implies that $\overline{\mathcal H}$ contains a maximal element, denoted  by $[\overline{M},\overline{f},\overline{g}_1,\overline{g}_2]$.
\begin{figure}
\centering 
\includegraphics[angle=-90,origin=c,height=7cm]{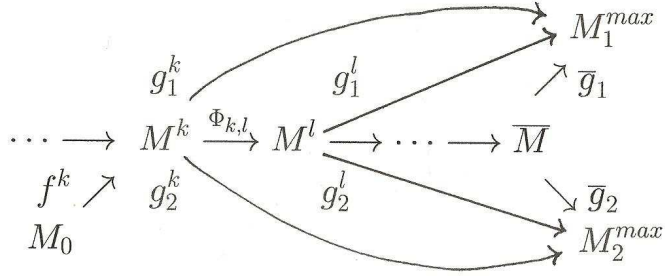}
\caption{ \protect A totally ordered set in $\overline{\mathcal H}$ with the maximal element $[\overline{M},\overline{f},\overline{g}_1,\overline{g}_2]$. \label{fig:unicita}}
\end{figure}
\begin{center}
\end{center}
Let $\mathcal{M}$ be the quotient of the disjoint union $M^{max}_1\bigsqcup M^{max}_2$ by the relation which identifies $\overline{g}_1(x)$ with  $\overline{g}_2(x)$ for every $x\in \overline{M}$. The projections $\pi_i:M^{max}_i\rightarrow \mathcal{M}$ are conformal embeddings, and every point of $\mathcal{M}$ has a neighborhood  homeomorphic to $\mathbb{R}^n$. To prove that  $\mathcal{M}$ is a manifold, the key point that we have to check is:

\begin{lem}
$\mathcal{M}$ is  Hausdorff.
\end{lem}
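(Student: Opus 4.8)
The plan is to argue by contradiction, using the maximality of $[\overline{M},\overline{f},\overline{g}_1,\overline{g}_2]$ in $\overline{\mathcal H}$. Suppose $\mathcal{M}$ is not Hausdorff. Since the projections $\pi_1,\pi_2$ are conformal embeddings with Hausdorff domains, and each $\pi_i(M^{max}_i)$ is open in $\mathcal{M}$ and homeomorphic to $M^{max}_i$, two points lying in the same $\pi_i(M^{max}_i)$ can always be separated; hence a non-separated pair must be of the form $P=\pi_1(a)$ and $Q=\pi_2(b)$ with $a\in M^{max}_1$, $b\in M^{max}_2$ and $P\neq Q$. As $\mathcal{M}$ is locally Euclidean, hence first countable, there is a sequence $R_n\to P$, $R_n\to Q$; each $R_n$ must lie in the gluing locus, so $R_n=\pi_1(\overline{g}_1(x_n))=\pi_2(\overline{g}_2(x_n))$ for some $x_n\in\overline{M}$, with $\overline{g}_1(x_n)\to a$ and $\overline{g}_2(x_n)\to b$. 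If $a$ were an interior point $\overline{g}_1(x_\infty)$, then $x_n\to x_\infty$ (as $\overline{g}_1$ is a homeomorphism onto its open image) and $\overline{g}_2(x_n)\to\overline{g}_2(x_\infty)$, forcing $P=Q$; so $a\in\partial\,\overline{g}_1(\overline{M})$, and symmetrically $b\in\partial\,\overline{g}_2(\overline{M})$.

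Next I would establish that $a$ and $b$ lie on boundary components of the same causal type. By Lemma \ref{lemma: frontiera Cauchy-plongement} we may assume, after possibly reversing the time orientation, that $a\in\partial^+\overline{g}_1(\overline{M})$, i.e.\ $a\in I^+(S_1)$ where $S_1=\overline{g}_1(\overline{S})$. Since $I^+(S_1)$ is open and $\overline{g}_1$ is a Cauchy-embedding, causal convexity (Lemma \ref{lemma: l'immagine Cauchy-plong. e causalemente convessa}) gives that the $x_n$ eventually lie in $I^+(\overline{S})$, hence $\overline{g}_2(x_n)\in I^+(S_2)$ and $b\in\overline{I^+(S_2)}$. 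As $b$ is a boundary point it cannot lie on $S_2$, so in fact $b\in\partial^+\overline{g}_2(\overline{M})$.

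The heart of the argument is to manufacture from $a$ and $b$ a strict enlargement of $\overline{M}$ inside $\mathcal H$. Consider $\phi:=\overline{g}_2\circ\overline{g}_1^{-1}$, a conformal diffeomorphism from $\overline{g}_1(\overline{M})$ onto $\overline{g}_2(\overline{M})$. Since $M^{max}_1$ and $M^{max}_2$ are conformally flat, in developing charts $\phi$ agrees with the restriction of a single element $g\in O(2,n+1)$, by Liouville's Theorem \ref{Liouville} and the rigidity of conformal maps (Theorem \ref{rigidite}). Passing to the limit along $x_n$ shows that this M\"obius map sends the developed image of $a$ to that of $b$; because $b$ is an \emph{interior} point of $M^{max}_2$, the local inverse of the developing map of $M^{max}_2$ near $b$ is defined, and one obtains a conformal embedding $\tilde\phi$ of a small connected neighborhood $U$ of $a$ in $M^{max}_1$ into $M^{max}_2$, extending $\phi$ on $U\cap\overline{g}_1(\overline{M})$ and with $\tilde\phi(a)=b$. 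It is precisely here that the non-Hausdorff hypothesis is used: it furnishes the interior limit point $b$ into which the extension must land.

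Finally I would glue $U$ to $\overline{M}$ along $\overline{g}_1^{-1}\big(U\cap\overline{g}_1(\overline{M})\big)$ via $\overline{g}_1$, obtaining a conformally flat space-time $M'$ that contains $\overline{M}$ as a proper open subset (the new point corresponding to $a$ is not in $\overline{g}_1(\overline{M})$). The inclusion $U\hookrightarrow M^{max}_1$ together with $\overline{g}_1$ defines a conformal embedding $g'_1:M'\to M^{max}_1$; $\tilde\phi$ together with $\overline{g}_2$ defines $g'_2:M'\to M^{max}_2$; and $f':M_0\to M'$ is $\overline{f}$ followed by the inclusion $\overline{M}\hookrightarrow M'$. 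All of these extend the original data and satisfy $F_i=g'_i\circ f'$. Taking $U$ to be a small causally convex neighborhood of $a$, one checks that $M'$ is Hausdorff and that $\overline{S}$ remains a Cauchy hypersurface of $M'$, so that $(M',f',g'_1,g'_2)\in\mathcal H$ strictly dominates $[\overline{M},\overline{f},\overline{g}_1,\overline{g}_2]$, contradicting its maximality. I expect this last verification, that the enlarged space $M'$ is genuinely globally hyperbolic with unchanged Cauchy hypersurface (Hausdorffness of the pushout, causal convexity of the images, and the diamond condition $I^-(\partial^+)\cap I^+(\partial^-)\subset\overline{g}_i(\overline{M})$ from Lemma \ref{lemma: frontiera Cauchy-plongement}), to be the main technical obstacle; the coherence and extension steps above are comparatively routine once the boundary lemma and Liouville's theorem are in hand.
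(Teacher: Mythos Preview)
Your broad strategy and first steps match the paper: locate the non-separated pair on $\partial^+\overline{g}_i(\overline{M})$, then use Liouville's theorem to extend $\overline{g}_2\circ\overline{g}_1^{-1}$ across a neighborhood $U$ of $a$. The gap is in the last step, where you glue all of $U$ to $\overline{M}$ and assert that $\overline{S}$ remains a Cauchy hypersurface of $M'\cong\overline{g}_1(\overline{M})\cup U$. This can fail: the point $a$ sits on the achronal edgeless hypersurface $S^+_1:=\partial^+\overline{g}_1(\overline{M})$, and when the future Cauchy development $\mathscr{D}^+_U(S^+_1\cap U)$ is empty, Lemma~\ref{lemma: developpement vide} produces a past lightlike geodesic from $a$ lying in $S^+_1$. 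That geodesic may exit $U$ while still on $S^+_1$, hence outside $\overline{g}_1(\overline{M})$; its restriction to $M'$ is then past-inextensible in $M'$ and never meets $\overline{S}$. No choice of ``small causally convex $U$'' rescues this, so the quadruple you build need not lie in $\mathcal H$.

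The paper's proof splits on exactly this dichotomy. If $\mathscr{D}^+_U(S^+_1\cap U)\neq\emptyset$, one enlarges $\overline{M}$ only by that Cauchy development (not all of $U$), which does give a genuine element of $\mathcal H$ strictly above $[\overline{M},\overline{f},\overline{g}_1,\overline{g}_2]$, and your contradiction goes through. If it is empty, one instead follows the inextensible null geodesic $r_1$ from $a$ whose initial arc lies in $S^+_1$: since $r_1$ must eventually reach the Cauchy hypersurface inside $\overline{g}_1(\overline{M})$, it leaves $S^+_1$ at some last parameter $T$. Propagating the Liouville extension along a finite cover of $r_1([0,T])$ makes $r_1(T)$ again a non-separated boundary point; reapplying Lemma~\ref{lemma: developpement vide} there, together with Proposition~\ref{prop: geodesiche luce e causalit�} and the achronality of $S^+_1$ (which forces the new null ray to continue $r_1$), yields $r_1(T+\varepsilon)\in S^+_1$, contradicting the choice of $T$. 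This propagation argument along the null generators of the boundary is the missing ingredient in your sketch.
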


\begin{proof}
 Let $(x,y)$ be a pair  of points in $M^{max}_1\times  M^{max}_2$ such that every neighborhood of   $\pi_1(x)$ intersects every neighborhood of  $\pi_2(y)$.  
 The points $x$ and $y$ are contained in respectively $\partial \overline{g}_1 (\overline{M})$ and $\partial \overline{g}_2 (\overline{M})$.  Let 
$$S^+_i:=\partial^+ \overline{g}_i (\overline{M}) \text{ and } S^-_i:=\partial^- \overline{g}_i (\overline{M}) $$
 be respectively the past and future boundary  in $M^{max}_i$ of the image of $\overline{M}$ by the embedding  $ \overline{g}_i$,  for $i=1,2$. 
According to Lemma \ref{lemma: frontiera Cauchy-plongement}, $S^+_i$ and $S^-_i$ are closed, achronal, edgeless subsets of $M^{max}_i$, and $S^+_i\cap S^-_i=\emptyset$, for $i=1,2$ . 
Up to time reversal, we can assume, without loss of generality, that
   $x\in  S^+_1$. 
   Let $\Sigma_1$ and $\Sigma_2$ be the Cauchy hypersurfaces respectively of $M^{max}_1$ and $M^{max}_2$ defined by:
   $$\Sigma_1:=\overline{g}_1\circ \overline{f}(S_0) \text{ and } \Sigma_2:=\overline{g}_2\circ \overline{f}(S_0).$$
 Let $U$ be a (connected) neighborhood of $x$ in $M^{max}_1$ such that $U\cap\Sigma_1=\emptyset $ and $U\setminus S^+_1$ is the disjoint union of the two connected open sets : $U\cap \overline{g}_1 (\overline{M}) $ and $U \cap I^+(S^+_1)$. Then $U$ is contained in $I^+(\Sigma_1)$.  The image $U\cap\overline{g}_1 (\overline{M}) $ by the conformal, time-preserving diffeomorphism  $\overline{g}_2\circ \overline{g}_1^{-1}$ is then contained in $I^+(\Sigma_2)$. Thus  $y\in S^+_2$. \\
Since by hypothesis every neighborhood of $\pi_1(x)$ intersects every neighborhood  of $\pi_2(y)$, there exists  a neighborhood $U'$   of $y$ in $M^{max}_2$ such that $\overline{g}_2\circ \overline{g}_1^{-1}(U\cap \overline{g}_1 (\overline{M})  )=U'\cap \overline{g}_2 (\overline{M}) $.\\
   
 \noindent \emph{Key fact: by Liouville's Theorem \ref{Liouville} there exists a  neighborhood $V$ of  $y$ and a conformal diffeomorphism $f: U\rightarrow V$ which is equal to  $\overline{g}_2\circ \overline{g}_1^{-1}$ over
 $U\cap \overline{g}_1 (\overline{M}) $.}\\

\noindent
First assume that the future Cauchy  development  of $D^+_1$ of the achronal set $S^+_1\cap U $ in  $U$ is not empty. Since the open sets $U $ and $V$ are conformally equivalent (by the map $f$), they have the same causal structure. In particular, the future Cauchy  development $D^+_2$ of $S^+_2\cap U_2$ in  $V$  is not empty too. Let 
$$F: \overline{g}_1(\overline{M})\cup D^+_1 \longrightarrow \overline{g}_2(\overline{M})\cup D^+_2$$
be the conformal diffeomorphism defined by:
\begin{displaymath}
F(z):= \left\{ \begin{array}{ll}
\overline{g}_2\circ \overline{g}_1^{-1}(z) &  \text{\ if\ }z\in \overline{g}_1(\overline{M}) \\
 \textrm{\ }\\
f(z) & \text{\ if\ }z\in D^+_1
\end{array} \right.
\end{displaymath}
\noindent Thus, the two element of the set $\overline{\mathcal{H}}$ given by
$$\left[\overline{g}_1(\overline{M})\cup D^+_1,\  \overline{g}_1\circ \overline{f},\  id,\  F\right]  \ \ \text{ and }\  \ \left[\overline{g}_2(\overline{M})\cup D^+_2,\  \overline{g}_2\circ \overline{f},\  F^{-1}, \ id \right]$$
\noindent are  strictly greater then $[\overline{M},\overline{f},\overline{g}_1,\overline{g}_2]$. This contradicts the maximality
 of  $[\overline{M},\overline{f},\overline{g}_1,\overline{g}_2]$.\\

\noindent Therefore, the future Cauchy development of  $S^+_1\cap U $ in  $U$ is empty.\\
According to Lemma \ref{lemma: developpement vide}, there is a lightlike  past  geodesic  $c$ starting from $x$ which is contained in  $S^+_1\cap U $ at least in a neighborhood of $x$.  Let $r_1 : [0, a[ \rightarrow M^{max}_1$ be an inextensible lightlike past geodesic containing $c$. The curve $r_1$ intersects  the  Cauchy hypersurface $\Sigma_0 $ of $M^{max}_1$, in a point $r_1(T_0)$. Since $\Sigma_0$ is contained in $\overline{g}_1(\overline{M})$ and $S^+_1$ is achronal and closed,  there exists a $T\in [0,a[$ such that 
$$T:=\max\{    t \in [0, a[  :  r_1(t)\in S_1^+ \}$$
\noindent   We have  $0<T < T_0 < a$ and $c(]T, T_0])$ is contained in $\overline{g}_1(\overline{M})$. \\
Let $\{U_1\}_{i=0,\dots, m}$ be a finite covering of the compact set $r_1([0,T ])$ such that    $U_i\cap \overline{g}_1 (\overline{M}) $ and $U_i \cap I^+(S^+_1)$ are connected for all $i=1,\dots, m$.
 We can suppose $U_0=U$. Again,  by Liouville's Theorem  there exists a finite sequence $\{V_1\}_{i=0,\dots, m}$  of  open sets and   conformal diffeomorphisms $f_i: U_i\rightarrow V_i$, for $i=0,\dots, m$ such that  $f_0=f$, $V_0=V$ and   $f_i$ is equal to $\overline{g}_2\circ \overline{g}_1^{-1}$ over
 $U_i\cap \overline{g}_1 (\overline{M})$. \\
The isometries $f_i$ glue together to give a map   $F : \bigcup_{i=0}^nU_i\to \bigcup_{i=0}^nV_i$.  Then, for every $t\in [0,T]$, the points   $\pi_1(r_1(t))$ and $ \pi_2(F\circ r_1(t))$ of $\mathcal M$ have no disjointed neighborhoods. \\
Now we can again apply Lemma   \ref{lemma: developpement vide}, to find a past lightlike geodesic  $\gamma$ starting from $r_1(T)$ and contained in $S_1^+$ for a little while. According to Proposition \ref{prop: geodesiche luce e causalità},  $\gamma$ is contained inside $r_1$, because  $S^+_1$ is achronal. But if $\gamma$ is contained in $r_1$ then  $r_1(T+\varepsilon)\in S^+_1$  for some $\varepsilon>0$,   and this contradicts the definition of $T$.  The Lemma is proven. \end{proof}

\noindent Let $\Sigma:=\pi_1\circ \overline{g}_1\circ \overline{f}(S)$. Observe that $\Sigma$ is equal to $\pi_2\circ \overline{g}_2\circ \overline{f}(S)$.

\begin{lem}
\label{le:ghcs}
$\mathcal{M}$ is globally hyperbolic with Cauchy hypersurface $\Sigma$.
\end{lem}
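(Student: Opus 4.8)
The plan is to reduce the statement to the assertion that $\Sigma$ is a closed, acausal, edgeless hypersurface of $\mathcal M$ whose Cauchy development is all of $\mathcal M$: once this is known, Proposition \ref{prop: intD(A) GH} gives global hyperbolicity of $\mathcal M$, and the two projections $\pi_i$ become Cauchy-embeddings of the maximal space-times $M^{max}_i$ into $\mathcal M$, which is the point of the whole construction. I would first record the cheap facts. Writing $A_i:=\pi_i(M^{max}_i)$, these are two open sets covering $\mathcal M$ with $A_1\cap A_2=W:=\pi_1(\overline g_1(\overline M))=\pi_2(\overline g_2(\overline M))$, and $\Sigma\subset W$. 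Since each $\pi_i$ is a conformal diffeomorphism onto $A_i$ and $\overline g_i\circ\overline f(S)$ is a Cauchy hypersurface of $M^{max}_i$, the set $\Sigma$ is a Cauchy hypersurface of the space-time $A_i$; in particular $\Sigma$ is edgeless (a local property) and closed in each $A_i$, hence closed in $\mathcal M$.

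The heart of the argument is the claim that each $A_i$ is \emph{causally convex} in $\mathcal M$. I would fix a future causal curve $\gamma\colon[0,1]\to\mathcal M$ with $\gamma(0),\gamma(1)\in A_1$ and suppose it leaves $A_1$. By Lemma \ref{lemma: frontiera Cauchy-plongement} applied to $\overline g_2$, the boundary $\partial\overline g_2(\overline M)$ splits as $S^+_2\sqcup S^-_2$ with $S^+_2\subset I^+(\overline g_2\circ\overline f(S))$ and $S^-_2\subset I^-(\overline g_2\circ\overline f(S))$, and the proof of that lemma furnishes the crucial \emph{one-way} property $I^+(S^+_2)\cap\overline g_2(\overline M)=\emptyset$ (and dually for $S^-_2$). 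Since $\partial_{\mathcal M}A_1\subset\pi_2(S^+_2\cup S^-_2)$, at the first exit time $t_\ast$ the point $\gamma(t_\ast)$ lies on $\pi_2(S^+_2)$ or $\pi_2(S^-_2)$; because $\gamma$ is future directed and $W=\pi_2(\overline g_2(\overline M))$ is causally convex in $A_2$ (Lemma \ref{lemma: l'immagine Cauchy-plong. e causalemente convessa}), a future curve can leave the causally convex set $\overline g_2(\overline M)$ only across its \emph{future} wall, so $\gamma(t_\ast)\in\pi_2(S^+_2)$ and just afterwards $\gamma$ enters $\pi_2(I^+(S^+_2))$. The one-way property then forbids $\gamma$ from meeting $W$ again; and since, near any point of $\partial_{\mathcal M}A_2\subset\pi_1(S^\pm_1)$ the chart $A_2$ coincides with $W$, reaching the remaining piece $A_1\setminus A_2$ would also force a passage through $W$. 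Thus $\gamma$ can never return to $A_1$, contradicting $\gamma(1)\in A_1$. This establishes causal convexity of $A_1$, and symmetrically of $A_2$.

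Granting causal convexity, the rest is routine and I would finish as follows. For \emph{acausality} of $\Sigma$: a timelike curve joining two points of $\Sigma\subset A_1$ stays in $A_1$ by causal convexity, contradicting that $\Sigma$ is achronal in $A_1$. For $\mathscr D(\Sigma)=\mathcal M$: given an inextensible causal curve $c$ of $\mathcal M$ meeting, say, $A_1$, let $(\alpha,\beta)$ be the connected component of $c^{-1}(A_1)$ containing that parameter; causal convexity of $A_1$ forces $c^{-1}(A_1)=(\alpha,\beta)$, and at each finite endpoint $c$ leaves every compact subset of $A_1$, so $c|_{(\alpha,\beta)}$ is an inextensible causal curve of $A_1$. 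As $\Sigma$ is a Cauchy hypersurface of $A_1$, this restriction meets $\Sigma$, and by acausality it meets it exactly once. Hence $\Sigma$ is a closed acausal edgeless hypersurface with $\mathscr D(\Sigma)=\mathcal M$, i.e. a Cauchy hypersurface of $\mathcal M$, so $\mathcal M$ is globally hyperbolic.

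The main obstacle is precisely the causal-convexity claim, i.e. controlling causal curves that pass from one chart to the other. Everything hinges on the two boundary walls $S^\pm_j$ of $\overline g_j(\overline M)$ behaving as one-way membranes, which is the content extracted from Lemma \ref{lemma: frontiera Cauchy-plongement}: the inclusion $I^-(\partial^+)\cap I^+(\partial^-)\subset\text{image}$ together with $I^+(\partial^+)\cap\text{image}=\emptyset$. A secondary subtlety, already absorbed above, is that a curve leaving $A_1$ could a priori wander through $A_2\setminus A_1$ and attempt to re-enter via $A_1\setminus W$; the one-way property combined with the local identity $A_2=W$ near $\partial_{\mathcal M}A_2$ (a consequence of the Hausdorff property just proved) is exactly what rules this out.
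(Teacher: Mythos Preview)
Your argument is correct, but it takes a longer route than the paper's. You prove that each chart $A_i=\pi_i(M_i^{max})$ is causally convex in $\mathcal M$, invoking the ``one-way membrane'' property $I^+(S^+_2)\cap\overline g_2(\overline M)=\emptyset$ extracted from the \emph{proof} of Lemma~\ref{lemma: frontiera Cauchy-plongement}, and then deduce that $\Sigma$ is a Cauchy hypersurface. The paper instead works with the smaller set $W=\pi_1(\overline g_1(\overline M))=\pi_2(\overline g_2(\overline M))$ and uses only the elementary topological fact that $\mathcal M\setminus W$ is the disjoint union of the two relatively open pieces $A_1\setminus A_2$ and $A_2\setminus A_1$: any excursion of a causal curve out of $W$ must lie entirely in one of these pieces by connectedness, hence entirely inside a single $A_i$, and this contradicts causal convexity of $W$ in $A_i\cong M_i^{max}$ (Lemma~\ref{lemma: l'immagine Cauchy-plong. e causalemente convessa}). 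So the paper gets causal convexity of $W$ in $\mathcal M$ in one line, without ever needing the boundary decomposition $S^\pm_j$ or the one-way property.

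What your approach buys is that the causal convexity of $A_i$ directly exhibits $\pi_i$ as a Cauchy-embedding (which the paper records afterwards as a consequence). What the paper's approach buys is economy: only the \emph{statement} of Lemma~\ref{lemma: l'immagine Cauchy-plong. e causalemente convessa} is needed, not the finer structure of $\partial\overline g_j(\overline M)$. One small point in your write-up: the claim ``a future curve can leave a causally convex set only across its future wall'' is not a consequence of causal convexity alone; you implicitly also use $S^-_2\subset I^-(\Sigma_2)$ so that a putative exit through $S^-_2$ could be continued by a future timelike curve back to $\Sigma_2\subset W$, contradicting causal convexity. This is fine, but worth making explicit.
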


\begin{proof}
Let $c :\mathbb{R}\rightarrow \mathcal{M}$ be an inextensible causal curve. Suppose that $c$ does not intersect  $\pi_1 \circ \overline{g}_1(\overline{M})=\pi_2\circ \overline{g}_2(\overline{M})$.  Then, since $\mathbb{R}$ is connected, $c$ is contained in either $\pi_1(M^{max}_1)\setminus \pi_1\circ \overline{g}_1(\overline{M})$ or in $\pi_2(M^{max}_2)\setminus \pi_1\circ \overline{g}_1(\overline{M})$. Switching the indices if necessary,
we can suppose that $c$ is contained in $\pi_1(M^{max}_1)\setminus \pi_1\circ \overline{g}_1(\overline{M})$. Then, there is a causal curve $c' : \mathbb{R}\rightarrow  M^{max}_1$ such that $c = \pi_1 \circ c'$.
Since $c$ is inextensible, so is $c'$, but $c'$ must intersect $\overline{g}_1(\overline{M})$. This is a contradiction.\\
We have proved that every inextensible causal curve  $c$ intersects $\pi_1 \circ \overline{g}_1(\overline{M})$.
Every connected component  of its intersection with $\pi_1 \circ \overline{g}_1(\overline{M})\simeq \overline{M}$   is an   inextensible causal curve (into $\pi_1 \circ \overline{g}_1(\overline{M})$) and thus intersects  $\pi_1\circ \overline{g}_1\circ f_1(S)=\Sigma$.
Moreover, since $\pi_1 \circ \overline{g}_1(\overline{M})$ is  causally convex inside $M_1^{max}$, it is also causally convex inside $\mathcal{M}$, implying that $c\cap \pi_1 \circ \overline{g}_1(\overline{M})$ has only one connected component. We have proved that every inextensible causal curve intersects  $\Sigma$. The hypersurface $\Sigma$ is achronal and edgeless in $\mathcal{M}$, 
so we have proved that $\Sigma$ is a Cauchy hypersurface of $\mathcal{M}$.
\end{proof}

\noindent As consequence of the proof of Lemma \ref{le:ghcs}, the maps $\pi_i$ are Cauchy-embeddings. The space-time $\mathcal{M}$ is conformally flat because it is covered by conformally flat open subsets given by the images of $\pi_i \circ \overline{g}_i$, $i=1,2$. Since, by hypothesis, the space-times $M_1^{max}$ and $M_2^{max}$ are maximal, the Cauchy-embeddings $\pi_1$  and $\pi_2$ are surjective, and  $M_1^{max}$ and $M_2^{max}$ are conformally equivalent. The proof of Theorem \ref{teo max} is complete. \qed

\begin{oss}\label{oss: extension conforme}
\emph{ Until now we have restricted ourselves to the case of conformally flat space-times.  If  we drop this restriction and define  $\mathcal{F}$  as the set of  triples  $(N,\psi, f)$, where $\psi$ and $f$ are exactly as before, but where $N$ is only globally hyperbolic and not necessarily conformally flat, most of the arguments in the proof above still apply. Given a totally ordered set $\{(M_i,S_i)\}_{i\in I}$ we can still define a topological space $\overline{M}$, candidate to be a \em maximal conformal extension \em  of $M$.\\
 But if one wants to prove that  $\mathcal{M}$ is a manifold, serious troubles arise. The proof of Proposition \ref{max espace-temps} strongly uses the conformally flat structure of the manifolds $M_i$.
 The most delicate point is to prove that $\overline{M}$ has a second-countable basis as a topological space.\\
Even if one were able to prove the existence of the conformal maximal extension, we have no way to prove uniqueness. In the conformally flat case, uniqueness follows from  Liouville's theorem for  conformally flat manifolds of dimension $\geq 3$.\\
However,   we still don't know any examples admitting different maximal conformal extensions. Actually,  all known examples of conformally flat  maximal space-times are also conformal maximal. So it seems interesting to investigate if, despite these difficulties, there is a well-defined notion of maximal conformal extension, in the category of conformal globally hyperbolic space-times.}
\end{oss}

\section{ Complete $C_0$-maximal space-times} \label{sec: thm punti coniugati}

We can easily prove that $\E_{1,n}$ is $C_0$-maximal. The proof goes as follows. Let $f:\E_{1,n}\to N$ be a Cauchy embedding of $\E_{1,n}$. According to Proposition \ref{cor: Geroch produit}, $N$ is simply connected. But, since $N$ is conformally flat, there exists a developing map $D: N\to\E_{1,n}$. The composition $D\circ f:\E_{1,n}\to \E_{1,n}$ preserves the conformal structure of $\E_{1,n}$,  and by Liouville's Theorem $D\circ f $ is an element of $\widetilde{O_0}(2,n+1)$. Hence, $f$ is surjective, and $\E_{1,n}$ is $C_0$-maximal. 
The following lemma is well known in the literature. (See for example \cite{Mess} Lemma 1 p. 7 for an idea of the proof.)

\begin{lem}\label{lem: rivestimento}
Let $f:(S,g)\rightarrow (S',g')$ be a local diffeomorphism between two Riemannian manifolds. Suppose that $(S,g)$ is complete and that $g(\cdot, \cdot)\leq f^*g'(\cdot, \cdot)$. Then $f$ is a covering map.
\end{lem}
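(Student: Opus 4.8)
The plan is to show that $f$ is a covering map by verifying the standard path-lifting criterion: every path in $S'$ lifts to a path in $S$, and moreover the lift exists for the full parameter interval. The hypothesis $g(\cdot,\cdot)\leq f^*g'(\cdot,\cdot)$ means that $f$ does not decrease lengths, so any curve upstairs is at least as long as its image downstairs. Combined with the completeness of $(S,g)$, this is exactly what prevents a lift from ``escaping to infinity'' in finite time. First I would recall that a local diffeomorphism $f$ which satisfies the path-lifting property (every path lifts, with prescribed starting point) is automatically a covering map; this is a classical topological fact. So the entire problem reduces to establishing path lifting.

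To prove path lifting, I would take a $C^1$ path $\gamma:[0,1]\to S'$ and a point $\tilde{x}_0\in S$ with $f(\tilde{x}_0)=\gamma(0)$. Since $f$ is a local diffeomorphism, the lift $\tilde{\gamma}$ starting at $\tilde{x}_0$ exists and is unique on a maximal half-open subinterval $[0,T)$ with $T\leq 1$. The key step is to show $T=1$ and that $\tilde{\gamma}$ extends to $[0,T]$. For this I would estimate the $g$-length of $\tilde{\gamma}$ on $[0,t]$: by the metric inequality,
\begin{eqnarray*}
\mathrm{length}_g(\tilde{\gamma}|_{[0,t]}) = \int_0^t \sqrt{g(\tilde{\gamma}'(s),\tilde{\gamma}'(s))}\,ds \leq \int_0^t \sqrt{f^*g'(\tilde{\gamma}'(s),\tilde{\gamma}'(s))}\,ds = \mathrm{length}_{g'}(\gamma|_{[0,t]}),
\end{eqnarray*}
which is bounded by the total $g'$-length of $\gamma$, a finite quantity. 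Hence $\tilde{\gamma}([0,T))$ has finite $g$-diameter and is therefore contained in a bounded (hence, by completeness and Hopf--Rinow, relatively compact) subset of $S$. As $s\to T^-$, the image $\gamma(s)\to\gamma(T)$, and a Cauchy argument on the length bound shows $\tilde{\gamma}(s)$ is Cauchy in the complete metric space $(S,g)$, so it converges to some $\tilde{x}_T\in S$ with $f(\tilde{x}_T)=\gamma(T)$. Using the local diffeomorphism property near $\tilde{x}_T$, the lift extends slightly beyond $T$, contradicting maximality unless $T=1$.

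The main obstacle I anticipate is the completeness argument at the endpoint: one must rule out the lift leaving every compact set in finite $g'$-time, and this is precisely where the length inequality $g\leq f^*g'$ is indispensable, since without it a short curve downstairs could correspond to an arbitrarily long (escaping) curve upstairs. The Cauchy estimate is clean because the $g$-length of $\tilde{\gamma}$ between parameters $s$ and $s'$ is controlled by the $g'$-length of $\gamma$ between the same parameters, which tends to $0$ as $s,s'\to T$; completeness then guarantees a limit point. I would also remark that one should first handle rectifiable or $C^1$ paths and then pass to arbitrary continuous paths by the usual approximation, or simply invoke that the path-lifting property for a local homeomorphism need only be checked on a generating class of paths. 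Once path lifting is established for all paths with arbitrary basepoint in the fiber, the conclusion that $f$ is a covering map is immediate from the classical characterization.
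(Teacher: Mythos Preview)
The paper does not supply its own proof of this lemma: it simply records that the result is well known and refers the reader to \cite{Mess} for an idea of the argument. Your path-lifting proof is correct and is one of the standard ways to establish the result; the only mild addendum is that the passage from ``lifting of $C^1$ paths'' to ``$f$ is a covering map'' is most cleanly done by lifting the radial geodesics of a convex ball in $S'$ (which are smooth, so your length estimate applies directly), thereby exhibiting evenly covered neighborhoods without needing to lift arbitrary continuous paths.
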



\begin{lem} \label{lem: D iniettiva su geodesiche luce}
Let $M$ be a causal, conformally flat, simply connected space-time. Then the developing map $D:M\to\E_{1,n}$ restricted to any causal curve of $M$  is injective.
\end{lem}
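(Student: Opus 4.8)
The plan is to exploit the fact that the developing map is conformal, hence causality-preserving, together with the explicit description of causal curves in $\E_{1,n}$ supplied by Lemma \ref{lem: curve causali E}. First I would recall, as at the beginning of this section, that since $M$ is conformally flat and simply connected it carries a globally defined developing map $D:M\to\E_{1,n}$, which is a local conformal diffeomorphism. Because the holonomy of a conformally flat structure lies in the orientation- and time-orientation-preserving component of the conformal group, the map $D$ preserves time-orientation; and since a conformal map preserves the causal type of every tangent vector, $D$ sends a future (resp.\ past) causal curve of $M$ to a future (resp.\ past) causal curve of $\E_{1,n}$.

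Next I would bring in the Cauchy time function $\tau:\E_{1,n}\simeq\Sf^n\times\R\to\R$ given by projection onto the second factor, which is noted to be a Cauchy time function just after Lemma \ref{lem: curve causali E}. The key observation is that $\tau$ is \emph{strictly} increasing along any future causal curve of $\E_{1,n}$: indeed, Lemma \ref{lem: curve causali E} asserts that every causal curve of $\E_{1,n}$ admits a parametrization of the form $s\mapsto (x(s),s)$, so that the value of $\tau$ along the curve is exactly the parameter $s$ and is therefore strictly monotone.

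Now let $c:I\to M$ be a causal curve, which we may assume future-directed (otherwise reverse the time-orientation). Combining the two previous steps, $D\circ c$ is a future causal curve of $\E_{1,n}$, so $\tau\circ D\circ c$ is strictly increasing on $I$ and in particular injective; hence $D\circ c$ is itself injective. Since $M$ is causal, the curve $c$ has no self-intersection, so this precisely says that $D$ restricted to the subset $c(I)$ of $M$ is injective, which is the claim.

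I expect no serious obstacle here: the whole argument rests on the conformal invariance of the causal structure and on the strict monotonicity of the time coordinate along causal curves of $\E_{1,n}$. The only points requiring care are checking that $D$ is genuinely time-orientation preserving (which is why one works with the identity component of the conformal group) and that the monotonicity of $\tau$ is strict rather than merely weak, which follows directly from Lemma \ref{lem: curve causali E}. Equivalently, one may phrase the conclusion as a contradiction with causality: an equality $D(c(t_1))=D(c(t_2))$ with $t_1<t_2$ would make $D\circ c|_{[t_1,t_2]}$ a nonconstant closed causal curve in $\E_{1,n}$, contradicting the fact that $\E_{1,n}$, being globally hyperbolic, is causal.
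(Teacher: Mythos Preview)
Your proof is correct and follows essentially the same approach as the paper: the paper simply picks any time function $T:\E_{1,n}\to\R$, observes that $T\circ D$ is increasing along every future causal curve of $M$, and concludes injectivity. Your version is just a more detailed elaboration of the same idea, using the explicit Cauchy time function $\tau$ and spelling out why $D$ preserves time-orientation and why the monotonicity is strict.
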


 \begin{proof}
 Let $T: \E \to \mathbb{R}$ be any time function on $\E$.
 The restriction of $T \circ D$  to any future causal curve of $M$ is increasing, hence injective. The lemma is proved.
 \end{proof}

\noindent Now we can prove Theorem \ref{teo: compatta s connessa}, which can be restated as follows:

\begin{teo}\label{teo: compatta s connessa2}
For $n\geq 2$, $\E_{1,n}$ is the only conformally flat, simply connected, Cauchy-compact space-time which is $C_0$-maximal.
\end{teo}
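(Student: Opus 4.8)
The plan is to prove that the developing map $D\colon M\to\E_{1,n}$ is a global conformal diffeomorphism; since $M$ and $\E_{1,n}$ are then conformally diffeomorphic, the theorem follows. Such a $D$ exists because $M$ is simply connected and conformally flat, as recalled at the beginning of this section. The subtlety is that a developing map is only a local diffeomorphism and need not be globally injective, so rather than attempting to prove global injectivity by hand I would identify a Cauchy hypersurface with a genuine Cauchy hypersurface of $\E_{1,n}$ and then invoke the uniqueness of the maximal extension (Theorem \ref{teo max}).

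First I would fix a smooth compact spacelike Cauchy hypersurface $S\subset M$: one exists because $M$ is Cauchy-compact and, by Geroch's Theorem \ref{teo: Geroch}, the level sets of a smooth Cauchy time function are spacelike Cauchy hypersurfaces, all homeomorphic to the given compact one. Writing $\E_{1,n}\simeq\Sf^n\times\R$ and letting $\mathrm{pr}_{\Sf}\colon\E_{1,n}\to\Sf^n$ denote the projection, I claim that $\Pi:=\mathrm{pr}_{\Sf}\circ D|_S\colon S\to\Sf^n$ is a local diffeomorphism: $D$ sends the spacelike $n$-plane $T_pS$ to a spacelike $n$-plane of $T\E_{1,n}$, and the only vectors annihilated by $d\,\mathrm{pr}_{\Sf}$ are the multiples of the timelike direction $\partial_t$, so $d\,\mathrm{pr}_{\Sf}$ is injective, hence an isomorphism, on every spacelike $n$-plane. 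Equipping $S$ with the metric $\Pi^*d\sigma^2$, which is complete because $S$ is compact, and comparing it with itself, Lemma \ref{lem: rivestimento} shows that $\Pi$ is a covering map; since $\Sf^n$ is simply connected for $n\geq 2$ and $S$ is connected, $\Pi$ is a diffeomorphism. This is exactly where the hypothesis $n\geq 2$ enters.

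Consequently $D|_S$ is an embedding whose image is the graph $\Sigma_0=\{(x,u(x)):x\in\Sf^n\}$ of a $C^1$ map $u\colon\Sf^n\to\R$. Since $\Sigma_0$ is spacelike, $|du|<1$ everywhere, so by compactness $u$ is $(1-\delta)$-Lipschitz for some $\delta>0$; Lemma \ref{lemma: passato e futuro in E} then shows that no two points of $\Sigma_0$ are chronologically related, whence $\Sigma_0$ is a compact achronal edgeless hypersurface of $\E_{1,n}$. By Proposition \ref{prop: ipersuperficie de Cauchy compatta}, $\Sigma_0$ is a Cauchy hypersurface of $\E_{1,n}$. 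Next, since $D|_S$ is injective and $S$ is compact, $D$ is injective on a neighbourhood of $S$; using the product structure $M\simeq S\times\R$ I pick a globally hyperbolic slab $U\simeq S\times(-\varepsilon,\varepsilon)$ inside this neighbourhood on which $D$ restricts to a conformal diffeomorphism onto $V:=D(U)$, with $S$ a Cauchy hypersurface of $U$ and $\Sigma_0=D(S)$ a Cauchy hypersurface of $V$.

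To finish I would observe that $U$ is a globally hyperbolic conformally flat space-time, that the inclusion $U\hookrightarrow M$ is a Cauchy-embedding, and that $U\xrightarrow{D|_U}V\hookrightarrow\E_{1,n}$ is also a Cauchy-embedding, its image being a neighbourhood of the Cauchy hypersurface $\Sigma_0$ of $\E_{1,n}$. Thus both $M$ and $\E_{1,n}$ are $C_0$-maximal extensions of $U$: the space-time $M$ by hypothesis, and $\E_{1,n}$ because it is $C_0$-maximal, as established at the start of the section. By the uniqueness part of Theorem \ref{teo max}, $M$ and $\E_{1,n}$ are conformally diffeomorphic. The main obstacle, and the reason for this slab-plus-uniqueness device, is that one cannot establish global injectivity (equivalently completeness of the $(G,X)$-structure) directly; this is precisely the failure one must rule out, and it is the uniqueness of the maximal $C_0$-extension that does so, once the Cauchy hypersurface of $M$ has been matched with a genuine Cauchy hypersurface of $\E_{1,n}$.
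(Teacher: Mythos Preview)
Your proof is correct, and its first half—showing that $\mathrm{pr}_{\Sf}\circ D|_S$ is a covering of $\Sf^n$, hence a diffeomorphism, so that $D(S)$ is a compact achronal edgeless hypersurface and therefore a Cauchy hypersurface of $\E_{1,n}$ by Proposition~\ref{prop: ipersuperficie de Cauchy compatta}—coincides with the paper's argument.

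The second half genuinely diverges. The paper does \emph{not} pass to a slab and invoke uniqueness: it proves directly that $D$ itself is globally injective. The argument is short. Pull back the timelike field $\partial_t$ to a timelike field $T$ on $M$; by Lemma~\ref{lem: D iniettiva su geodesiche luce} the map $D$ is injective along each orbit of $T$, and each orbit meets $S$ exactly once since $S$ is Cauchy. If $D(p)=D(q)$ with $p\neq q$, the $T$-orbits through $p$ and $q$ meet $S$ in distinct points $\bar p,\bar q$, yet $D$ sends both orbits into the single $\partial_t$-orbit through $D(p)=D(q)$, which meets the graph $D(S)$ in a unique point; hence $D(\bar p)=D(\bar q)$, contradicting injectivity of $D|_S$. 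Thus $D$ is a global Cauchy-embedding, and surjectivity then follows from the $C_0$-maximality of $M$ alone, with no appeal to the uniqueness part of Theorem~\ref{teo max}. So your closing remark that ``one cannot establish global injectivity directly'' is inaccurate: this is precisely what the paper does, and it is arguably simpler than the slab-plus-uniqueness device. That said, your route is perfectly valid and shows nicely how the uniqueness theorem can substitute for a hands-on injectivity argument; it also yields only an abstract conformal equivalence $M\cong\E_{1,n}$, whereas the paper's argument identifies the developing map $D$ itself as the diffeomorphism.
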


\begin{proof}
Let $M$ be a conformally flat simply connected $C_0$-maximal space-time and let $\Sigma\subset M$ be a compact, spacelike Cauchy hypersurface of $M$. According to Corollary \ref{cor: Geroch produit} $M$ is diffeomorphic to the product $\Sigma \times \mathbb{R}$, and $\Sigma$ is simply connected.\\
Let $D: M \rightarrow Ein_{1,n}$ be the developing map of the conformally flat structure on $M$. Consider the decomposition  $Ein_{1,n}\simeq (\mathbb{S}^n\times \mathbb{R},[d\sigma^2-dt^2])$, described in  Section \ref{sezione: Ein}. The pull-back of the metric $d\sigma^2-dt^2$ by $D$ is a Lorentzian metric over $M$ which defines a Riemannian metric $g_0$ on $\Sigma$. The metric $g_0$ is complete  because $\Sigma $ is compact.\\
Let $\pi: Ein_{1,n}\simeq \mathbb{S}^n\times \mathbb{R}\rightarrow \mathbb{S}^n$ be the projection on the first factor. This is a bundle whose fibers are the orbits of the timelike vector field $\partial_t$. The main observation is that $\pi^*d\sigma=d\sigma\geq d\sigma- dt^2$ which implies that $d\pi$ expands the lengths. Since $D\vert_\Sigma$ is a local isometry, $d(\pi \circ D\vert_\Sigma)$ also expands the lengths. Explicitly, for all  $X$ in $T_x\Sigma\subset T_xM$ we have :
$$ \ g_0(X,X)=g (d_xD X,d_xD X)= d\sigma(Y,Y) - \vert a\vert^2 \leq d\sigma(Y,Y)$$
where $Y$ is the component of $X$ which is orthogonal to the orbit of $\partial_t$ and $a$ the tangent one.\\
According to Lemma \ref{lem: rivestimento}, the map $\pi \circ D\vert_\Sigma: \Sigma \rightarrow \mathbb{S}^n$ is a covering map.
Since $\Sf^n$ is simply connected, $\pi \circ D\vert_\Sigma$ is a diffeomorphism.
Therefore the embedded hypersurface $D(\Sigma)$ intersects each orbit of $\partial_t$ at exactly one point. The pull-back on $M$ by $D$ of the vector field  $\partial_t$ of  $Ein_{1,n}$ is a timelike vector field $T$. By Lemma \ref{lem: D iniettiva su geodesiche luce}, the map $D$ is injective along the orbits of  $T$. Since $\Sigma$ is a Cauchy hypersurface of $M$, every orbit of $T$ intersects $\Sigma$ at exactly one point. Let $p,q\in M$, $p\neq q$, such that $D(p)=D(q)$ and let $c_p, c_q$ be the two orbits of $T$, respectively through $p$   and $q$. These orbits intersect $\Sigma$ at two points $\overline{p}$ and $\overline{q}$, obviously we have $\overline{p}\neq \overline{q}$. Since $D$ is a local diffeomorphism, $D(c_p)=D(c_q)$ and it is contained in the orbit of $\partial_t$ through the point $D(p)=D(q)$, which intersects $D(\Sigma)$ at a unique point $x$. Thus $x=D(\overline{p})=D(\overline{q})$. This is absurd because $D$ is injective on $\Sigma$. Therefore, $D$ is injective. \\
Moreover, since the restriction of $D$ to $\Sigma$ is injective, the image $D(\Sigma)$ is a spacelike, compact, edgeless, embedded hypersurface in $\E_{1,n}$.  By Proposition \ref{prop: ipersuperficie de Cauchy compatta}, $D(\Sigma)$ is a Cauchy hypersurface of $Ein_{1,n}$ and $D$ is a Cauchy embedding. Since $M$ is $C_0$-maximal, Theorem  $\ref{teo max}$ implies that $D$ is also surjective: $M$ and $Ein_{1,n}$ are  conformally equivalent. 
\end{proof}

\noindent Let  $M$ be  a conformally flat space-time of dimension $n+1\geq 3$. Let $D: \M\to \E_{1,n}$ be the developing map and $\rho: \pi_1(M)\to O_0(2,n+1)$ the holonomy morphism.  Lemma \ref{lem: D iniettiva su geodesiche luce} is related to the problem of finding injectivity domains for $D$. Let $x\in \E_{1,n}$. By Remark \ref{oss: geodesiche luce E intersezioni}, all the lightlike geodesics starting from $x$ intersect each other only at the conjugate points $\sigma^k(x)$ for every $k\in \Z$. Let $\alpha, \beta : \R\rightarrow \M$, $\alpha(0)=\beta(0)=p$ be two distinct past inextensible lightlike geodesics, such that their images $D\circ \alpha$ and $D\circ \beta$ have an intersection point $z$ other then $D(p)$. Then, if the restriction of $D$ to $\alpha\cup \beta$ is injective,  $\alpha $ and $\beta $ necessarily have an intersection point $q\in D^{-1}(z)$. Conversely, if $D(\alpha)\cap D(\beta)=\emptyset$, then the restriction of $D$ to $\alpha\cup \beta$ is injective.




\begin{teo}\label{1 punto coniugato}
Let $M$ be a globallly hyperbolic conformally flat, $C_0$-maximal space-time of dimension $n+1\geq 3$, and let $D:\M\rightarrow \E_{1,n}$ be the developing map. Suppose that there is a lightlike geodesic $\alpha$ in $\M$ whose image by $D$ contains two conjugate points in $\E_{1,n}$. Then $D$  is a diffeomorphism.
\end{teo}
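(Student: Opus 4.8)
The plan is to exploit the conjugate points to manufacture a compact Cauchy hypersurface in $\M$, thereby reducing the statement to the Cauchy-compact situation already settled in Theorem \ref{teo: compatta s connessa2}. Throughout, recall that $\M$, being the universal cover of the globally hyperbolic $M$, is itself globally hyperbolic, simply connected and conformally flat, so Lemma \ref{lem: D iniettiva su geodesiche luce} applies and $D$ is injective along every causal curve. In particular $D$ is injective on $\alpha$, so the two conjugate points of $D(\alpha)$ — which I take to be a first conjugate pair $a$ and $\sigma(a)$ — are the images of two distinct points $p_0,p_1\in\alpha$, say with $p_1\in J^+(p_0)$. Since a great circle of $\Sf^n$ through $x_0$ reaches $-x_0$ after length $\pi$, Lemma \ref{lemma: passato e futuro in E} shows $a$ and $\sigma(a)$ are lightlike related but not chronologically related; developing a hypothetical timelike curve from $p_0$ to $p_1$ would contradict this, so $p_0$ and $p_1$ are lightlike related but not chronologically related in $\M$ as well.

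The key step is a \emph{refocusing} statement: every future lightlike geodesic issued from $p_0$ passes through $p_1$. The differential $d_{p_0}D$ is a conformal linear isomorphism, hence identifies the future null directions at $p_0$ with those at $a$; by Lemma \ref{g\'eod\'esiques lumieres} each future null geodesic $\gamma_u$ from $p_0$ develops to a future null geodesic from $a$, which meets $\sigma(a)$ after spatial length $\pi$. Let $s_u$ be the parameter at which $D\circ\gamma_u$ reaches $\sigma(a)$; by injectivity of $D$ along $\gamma_u$ the point $\gamma_u(s_u)$ is a well-defined element of the discrete fibre $D^{-1}(\sigma(a))$, depending continuously on $u$. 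As the space of future null directions at $p_0$ is a sphere $\Sf^{n-1}$, which is connected because $n\geq 2$, the map $u\mapsto\gamma_u(s_u)$ is constant; its value along the direction of $\alpha$ is $p_1$, so every $\gamma_u$ reaches $p_1$. (Global hyperbolicity guarantees that the geodesics extend as far as $s_u$: the set of directions for which this holds is open and closed, and no generator can escape before $s_u$.)

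Refocusing then gives that the diamond $K:=J^+(p_0)\cap J^-(p_1)$ is a compact Cauchy hypersurface of $\M$. It is compact by global hyperbolicity and achronal because $p_0,p_1$ are not chronologically related; by Proposition \ref{prop: geodesiche luce e causalit�} each of its points lies on a lightlike geodesic from $p_0$ to $p_1$, so $D(K)$ is contained in $\Sigma_0:=\partial I^+(a)=\{(x',d_0(x_0,x'))\}$. The latter is the boundary of the future set $I^+(a)$, hence achronal and edgeless by Lemma \ref{lemma: frontiera di insieme passato}, and it is compact (homeomorphic to $\Sf^n$), so it is a Cauchy hypersurface of $\E$ by Proposition \ref{prop: ipersuperficie de Cauchy compatta}. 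Using Remark \ref{oss: geodesiche luce E intersezioni} (distinct null geodesics from $a$ meet only at conjugate points) together with refocusing, one checks that $D|_K\colon K\to\Sigma_0$ is a continuous bijection, hence a homeomorphism, which transports the edgeless achronal structure back to $K$; thus $K$ is a compact Cauchy hypersurface, again by Proposition \ref{prop: ipersuperficie de Cauchy compatta}. In particular $\M$ admits a compact (smooth, spacelike) Cauchy hypersurface $\Sigma$, and since $\Sigma$ is homeomorphic to $\Sf^n$, hence simply connected, the deck group $\Gamma:=\pi_1(M)$ of $\M\to M$ is finite.

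Finally I conclude as in the proof of Theorem \ref{teo: compatta s connessa2}: the compact spacelike Cauchy hypersurface $\Sigma$ yields, through Lemma \ref{lem: rivestimento}, a diffeomorphism $\pi\circ D|_\Sigma\colon\Sigma\to\Sf^n$ (with $\pi\colon\E\to\Sf^n$ the projection), which forces $D$ to be injective and $D(\Sigma)$ to be a Cauchy hypersurface of $\E$, so $D$ is a conformal Cauchy-embedding onto a causally convex open set $\Omega=D(\M)$ (Lemma \ref{lemma: l'immagine Cauchy-plong. e causalemente convessa}). For surjectivity I invoke the $C_0$-maximality of $M$: choosing $\Sigma$ to be $\Gamma$-invariant, the holonomy $\rho(\Gamma)\subset O_0(2,n+1)$ is a finite group preserving the compact spacelike Cauchy hypersurface $D(\Sigma)$ and acting freely on it, so an averaged invariant timelike vector field shows $\rho(\Gamma)$ acts freely on all of $\E$; hence $\E/\rho(\Gamma)$ is a globally hyperbolic conformally flat Cauchy-compact space-time extending $M=\Omega/\rho(\Gamma)$, and maximality forces $\Omega=\E$, so $D$ is a diffeomorphism. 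I expect the main obstacle to be exactly the refocusing step and its upgrade to ``$K$ is an edgeless compact Cauchy hypersurface'': this is where one must recover global causal information from the merely local developing map $D$, ensuring that the lightlike generators from $p_0$ neither escape nor fail to close up, with the connectedness of $\Sf^{n-1}$ (valid precisely because $n\geq 2$) as the decisive mechanism. A secondary subtlety is the transfer of $C_0$-maximality from $M$ to its universal cover for surjectivity, handled through the finiteness of $\Gamma$ and the uniqueness of the maximal extension (Theorem \ref{teo max}).
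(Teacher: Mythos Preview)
Your strategy coincides with the paper's: establish refocusing of the null geodesics issued from $p_0$ at $p_1$, use this to produce a compact achronal edgeless set (hence a compact Cauchy hypersurface by Proposition~\ref{prop: ipersuperficie de Cauchy compatta}), and then invoke the Cauchy-compact case. The differences are in the execution, and one of them is a real gap.

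The gap is in the refocusing step. Your continuity-into-a-discrete-fibre argument for $u\mapsto\gamma_u(s_u)$ only makes sense once you know that for every null direction $u$ the inextensible geodesic $\gamma_u$ is long enough for $D\circ\gamma_u$ to reach $\sigma(a)$; nothing prevents a priori an inextensible null geodesic of $\M$ from developing onto a proper subarc of the corresponding null geodesic of $\E$. Your parenthetical ``the set of directions for which this holds is open and closed, and no generator can escape before $s_u$'' is precisely the claim that needs proof, and you do not supply the closedness. Openness is indeed soft (continuous dependence of geodesics on initial data plus the intermediate value theorem for the $\E$-time coordinate), but closedness is where global hyperbolicity enters in an essential way: if $u_n\to u$ and each $\gamma_{u_n}$ reaches $p_1$, the limit-curve lemma (Lemma~\ref{lemma: curva limite causale}) produces a causal curve from $p_0$ to $p_1$, which Proposition~\ref{prop: geodesiche luce e causalit�} forces to be the null geodesic $\gamma_u$. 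This is exactly the paper's argument for $\mathcal E$ being closed; without it your continuity scheme is circular. You correctly identify this step as the crux, but you have not supplied the missing tool.

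Two smaller remarks. First, ``transports the edgeless achronal structure back to $K$'' is not a valid inference: edgelessness is an ambient notion and is not preserved under abstract homeomorphism. The paper obtains it by showing that the union of null segments equals $\partial I^-(p)$ and invoking Lemma~\ref{lemma: frontiera di insieme passato}; your route can be salvaged by noting that, once $D|_K$ is bijective onto $\Sigma_0$, for small enough neighbourhoods $W$ one has $K\cap W=(D|_W)^{-1}(\Sigma_0)$, so edgelessness pulls back through the local conformal diffeomorphism $D$. Second, your surjectivity argument via finiteness of $\Gamma$, the $\rho(\Gamma)$-invariant timelike vector field, and the quotient $\E/\rho(\Gamma)$ is actually more explicit than the paper's Step~3, which invokes Theorem~\ref{teo: compatta s connessa2} directly on $\M$ and thereby tacitly uses that $\M$ inherits $C_0$-maximality from $M$; your route makes this passage honest.
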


\begin{proof}
By hypothesis $D\circ \alpha$ contains two conjugate points $x$ and $y$ of $\E_{1,n}$. We can assume, without loss of generality, that  $x=\sigma(y)$.  Since the restriction of $D$ to  $\alpha$ is injective, there are exactly two points   $p$ and $q$  of $\alpha$ such that $D(p)=x$ et $D(q)=y$.\\

\noindent\em Step 1): Every lightlike past  inextensible  geodesic starting from $p$ meets $q$.\em \\
Let $$\mathcal{N}^-(p):=\{ v\in T_p\M / \ v \text{ past lightlike}\ \}$$
On $\mathcal{N}^-(p)$ we define the equivalence relation identifying $v\sim w$ if and only if there exists $\lambda\in \R^+$ such that $v=\lambda w$.
Consider the quotient set $$\mathcal{S}^-(p):=\mathcal{N}^-(p)/ \sim$$
This is the set of lightlike past rays starting from $p$. Endowed with the  quotient topology, it is homeomorphic to the
$(n-1)$-sphere.
Let $p_1:\mathcal{N}^-(p)\rightarrow \mathcal{S}^-(p)$ be the projection map. \\
Let $\mathcal{E}$ the set of  $l$ in $\mathcal{S}^-(p)$ such that the past inextensible lightlike geodesic  $\gamma_l$ starting from $p$ and tangent to $l$, contains $q$.  
By hypothesis, $\mathcal{E}$ is not empty, because it contains the direction tangent to $\alpha$. We are going to show that $\mathcal{E}$ is open and closed in $\mathcal{S}^-(p)$. \\

 \emph{The set $\mathcal{E}$ is open.} \\
Let $l_0$ be an element of $\mathcal{E}$. By definition, $\gamma_{l_0}$ contains $q$.
Let $V$ be an open neighborhood of $y$ and let $W$ be the connected component of $D^{-1}(V)$  containing $q$. We can choose  $V$   such  that $D\vert_W$ is injective.\\
Since $D$ is continuous  and since $\gamma_{l_0}$ is past inextensible,  $D\circ\gamma_{l_0}$ intersects  $J^-(y)\setminus{y}$.  Let $z$ be a point of $D\circ \gamma_{l_0}$ which is in $( J^-(y)\setminus{y})\cap V$ and let $q_0$ be the unique point of $\gamma_{l_0}$ such that $D(q_0)=z$.\\
Since $J^+(y)$ is closed in $V$, we can find an open connected neighborhood  $V'$ of $z$ such that $V'\subset V$ and $V'\cap J^+(y)=\emptyset$. The connected component $W'$ of $D^{-1}(V')$ contained in $W$ is an open neighborhood of $q_0$.\\
Pick a Lorentzian metric $g$ in the conformal class of $\M$ and let $\exp_p: T_p\M\to \M$ be its exponential map at the point $p$.  The set $\mathcal{W}:=\exp_p^{-1}(W')$ is an open subset of $T_pM$. The subset of $S^-(p)$ defined by
$$\mathcal{U}:=p_1(\mathcal{W}\cap \mathcal{N}^-(p))=\{l\in S^-(p)/\ \gamma_l\cap W'\neq \emptyset\} $$
is then open and contains $l_0$. \\
Let $l$ be in $\mathcal{U}$. The curve $\gamma_l$ intersects $W'$, therefore $D\circ \gamma_{l}$ intersects $V'$. It follows that $D\circ \gamma_{l}$ is a past lightlike geodesic starting at $x$ which intersects $J^-(y)$. By Remark \ref{oss: geodesiche luce E intersezioni}, the curve $D\circ \gamma_{l}$ contains $y$.
Since $D\vert_W$ is injective, $\gamma_l$ contains $q$. This shows that  $\mathcal{U}$ is an open neighborhood of $l_0$ contained in $\mathcal{E}$ (see Figure \ref{fig: GraficoDImo}).\\

\emph{The set $\mathcal{E}$ is closed.} \\
Let $\{l_n\}_{n \in \N}$ be a sequence in $\mathcal{E}$ converging to an element $l\in S^-(p)$. According to Lemma \ref{lemma: curva limite causale}, the sequence $\{\gamma_{l_n}\}_{n \in \N}$ has a limit curve which is a past causal curve $c$ between  $p$ and $q$. The fact that $x$ is not temporally related to $y$ implies that $q$ is not temporally related to  $p$ (because the image of timelike curves by $D$ are timelike curves). Proposition \ref{prop: geodesiche luce e causalità} implies that $c$ is a lightlike geodesic. Since $l_n\rightarrow l\in S^-( p)$,  the geodesic  $c$ coincides with  the geodesic  $\gamma_l$.\\

\noindent Since $\mathcal E$ is open and closed in the (connected) topological sphere $\mathcal{S}^-(p)$, it is the entire
$\mathcal{S}^-(p)$. This prove Step $1$.\\
Observe that the similar following statement is true: every future lightlike geodesic starting from $q$ contains $p$.\\

\noindent\emph{ Step 2) $\partial I^-(p)$ is a  compact Cauchy hypersurface in $\M$.}\\
Let $S$ be the union of all lightlike geodesic segments between  $p$ and $q$. By construction, $D(S)=\partial I^-(x)$. 
Let us prove that the restriction of $D$ to $S$ is injective. Let $r$, $s$ be two points of $S$ with $D(r)=D(s)$. Since any point of $S$ is contained in a lightike geodesic starting from $p$ and ending in $q$, and since  $D$ restricted to any causal curve is injective, we have that if $r=p$ (or reps. $r=q$) then $s=r=p$ (or reps. $s=r=q$). Then we can suppose $r,s \in S\setminus \{p,q\}$.  Let $\gamma_l$, $\gamma_{l'}$ be the geodesic segments between $p$ and $q$ containing respectively $r$ and $s$. Then $D(\gamma_l)$ and $D(\gamma_{l'})$ are geodesic segments between $x$ and $y$ having the point $D(r)=D(s)$ in common. Moreover, since $\E_{1,n}$ is causal, this point is different to $x$ and $y$.  Then the Remark \ref{oss: geodesiche luce E intersezioni} implies that the two geodesic $D(\gamma_l)$ and $D(\gamma_{l'})$ are the same. Since $D$ is a local diffeomorphism, it follows that $l=l'$. Since the restriction of $D$ to the causal curve $\gamma_l$ is injective, the equality $r=s$ follows.\\
The set   $S$  is achronal in $M$: if there was a timelike curve between two points of $S$, then the image under $D$ of this curve   would be a timelike curve between two points of the achronal subset $D(S)=\partial I^-(x)$.\\
 Since  $S$ is achronal we have $S\subset \partial I^-(p)$. By Proposition \ref{prop: geodesiche luce e causalità} we have also the inclusion  $\partial I^-( p)\subset S$.  Then $S= \partial I^-( p)$.
Since  $D(S)=\partial I^-(x)$ is compact, it follows that $S$ is also compact.
Moreover, according to Lemma \ref{lemma: frontiera di insieme passato}, the boundary $\partial I^-(p)=S$ of the past subset  $I^-(p)$ is  edgeless.\\
 We have proved that $S$ is a compact, achronal, edgeless subset of $\M$: by Proposition \ref{prop: ipersuperficie de Cauchy compatta},  $S$ is a Cauchy hypersurface of $\M$.\\

\noindent\em  Step 3) \em Since $\M$ admits a compact  Cauchy  hypersurface, we obtain by Theorem \ref{teo: compatta s connessa2} that $\M$ is conformally equivalent to $\E$.
\end{proof} 

\noindent We can now give the proof of Theorem \ref{1 punto coniugato}, which we restate here for the reader's convenience:\\

\noindent\textbf{Theorem 1.9.} \textit{Let $M$ be a conformally flat, globally hyperbolic, $C_0$-maximal space-time  which has two freely homotopic lightlike geodesics, which are distinct but with the same ends. Then $M$ is a finite quotient of $\E_{1,n}$.}

\begin{proof}
Since the   two  geodesic segments are freely homotopic, it is possible to lift them in such a way that the two lifts are two lightlike geodesic segments,  $\alpha $ and $\beta$, with the same ends.  The image of $\alpha$ by the developing map (and also the image of $\beta$) intersects two  conjugate points of  $\E_{1,n}$. By Theorem \ref{1 punto coniugato}, $\M$ is homeomorphic  to $\E_{1,n}$. In particular  the lift $\tilde{S}$ of every Cauchy hypersurface   $S$ of $M$ is homeomorphic to the sphere  $\Sf^n$. The fundamental group of $M$ has to preserve $\tilde{S}$ and it acts properly and discontinuously on $\tilde{S}$. Since  $\tilde{S}$ is compact, it follows that $\pi_1(M)$ is finite.
\end{proof}


\bibliographystyle{plain}
\bibliography{thesis}

\begin{thebibliography}{10}

\bibitem{BarbotGHflat}
Thierry Barbot.
\newblock Globally hyperbolic flat space-times.
\newblock {\em J. Geom. Phys.}, 53(2):123--165, 2005.

\bibitem{Barbot}
Thierry Barbot.
\newblock Domaines globalement hyperboliques de l'espace de minkowski et de
  l'espace anti-de sitter.
\newblock 2010.

\bibitem{BarbotQuentin}
Thierry Barbot and Quentin M{\'e}rigot.
\newblock Anosov {A}d{S} representations are quasi-{F}uchsian.
\newblock {\em Groups Geom. Dyn.}, 6(3):441--483, 2012.

\bibitem{Beem}
John~K. Beem, Paul~E. Ehrlich, and Kevin~L. Easley.
\newblock {\em Global {L}orentzian geometry}, volume 202 of {\em Monographs and
  Textbooks in Pure and Applied Mathematics}.
\newblock Marcel Dekker Inc., New York, second edition, 1996.

\bibitem{Sanchez}
Antonio~N. Bernal and Miguel Sánchez.
\newblock Globally hyperbolic spacetimes can be defined as 'causal' instead of
  'strongly causal'.
\newblock {\em Classical and Quantum Gravity}, 24(3):745, 2007.

\bibitem{Bonsante}
Francesco Bonsante.
\newblock Flat spacetimes with compact hyperbolic cauchy surfaces.
\newblock {\em J. Differential Geom.}, 69:441--521, 2005.

\bibitem{Cahenkerbrat82}
M.~Cahen and Y.~Kerbrat.
\newblock Transformations conformes des espaces sym\'etriques
  pseudo-riemanniens.
\newblock {\em Ann. Mat. Pura Appl. (4)}, 132:275--289 (1983), 1982.

\bibitem{C-Blocale}
Yvonne Choquet-Brouhat.
\newblock Théorème d'existence pour certaines systèmes d'équations aux dérivées
  partielles non linéaires.
\newblock {\em Acta Math.}, 88:141--225, 1952.

\bibitem{Choquet-Bruhat}
Yvonne Choquet-Bruhat and Robert Geroch.
\newblock Global aspects of the cauchy problem in general relativity.
\newblock {\em Comm. Math. Phys.}, 14:329--335, 1969.

\bibitem{Frances}
Charles Frances.
\newblock {\em Géométrie et Dynamique Lorentziennes Conforme}.
\newblock PhD thesis, Ecole normale supérieure de Lyon, 2002.

\bibitem{FrancesEssential}
Charles Frances.
\newblock Sur les vari\'et\'es lorentziennes dont le groupe conforme est
  essentiel.
\newblock {\em Math. Ann.}, 332(1):103--119, 2005.

\bibitem{GerochTimeFunction}
R.~Geroch.
\newblock Spinor structure of space-times in general relativity ii.
\newblock {\em J. Mathematical Phys.}, 83:342--348, 1970.

\bibitem{Goldman}
William~M. Goldman.
\newblock Geometric structures on manifolds and varieties of representations.
\newblock In {\em Geometry of group representations (Boulder, CO, 1987)},
  volume~74 of {\em Contemp. Math.}, pages 169--198. Amer. Math. Soc.,
  Providence, RI, 1988.

\bibitem{Hawking}
S.~W. Hawking and G.~F.~R. Ellis.
\newblock {\em The large scale structure of space-time}.
\newblock Cambridge University Press, London, 1973.
\newblock Cambridge Monographs on Mathematical Physics, No. 1.

\bibitem{Hawking_applicazioniConformi}
S.~W. Hawking, A.~R. King, and P.~J. McCarthy.
\newblock A new topology for curved space-time which incorporates the causal,
  differential, and conformal structures.
\newblock {\em J. Mathematical Phys.}, 17(2):174--181, 1976.

\bibitem{Kobayashi}
Shoshichi Kobayashi.
\newblock {\em Transformation groups in differential geometry}.
\newblock Classics in Mathematics. Springer-Verlag, Berlin, 1995.
\newblock Reprint of the 1972 edition.

\bibitem{Matsumoto}
Shigenori Matsumoto.
\newblock Foundations of flat conformal structure.
\newblock In {\em Aspects of low-dimensional manifolds}, volume~20 of {\em Adv.
  Stud. Pure Math.}, pages 167--261. Kinokuniya, Tokyo, 1992.

\bibitem{Mess}
Geoffrey Mess.
\newblock Lorentz spacetimes of constant curvature.
\newblock {\em Geom. Dedicata}, 126:3--45, 2007.

\bibitem{CausalGerarchy}
Ettore Minguzzi and Miguel S{\'a}nchez.
\newblock The causal hierarchy of spacetimes.
\newblock In {\em Recent developments in pseudo-{R}iemannian geometry}, ESI
  Lect. Math. Phys., pages 299--358. Eur. Math. Soc., Z\"urich, 2008.

\bibitem{ONeil}
Barrett {O}~Neil.
\newblock {\em Semi-Riemannian Geometry}.
\newblock A Series of Monographs and Textbooks. Samuel Eilenberg and Hyman
  Bass, Orlando, 1983.

\bibitem{Penrose}
Roger Penrose.
\newblock {\em Techniques of differential topology in relativity}.
\newblock Society for Industrial and Applied Mathematics, Philadelphia, Pa.,
  1972.
\newblock Conference Board of the Mathematical Sciences Regional Conference
  Series in Applied Mathematics, No. 7.

\bibitem{PenroseInfinity}
Roger Penrose.
\newblock Republication of: {C}onformal treatment of infinity.
\newblock {\em Gen. Relativity Gravitation}, 43(3):901--922, 2011.

\bibitem{Ruh}
Ernst~Alfred Ruh.
\newblock {\em On automorphism group of a G-structure}.
\newblock PhD thesis, Brown University, 1864.

\bibitem{SachsWuTimeFunction1977}
R.~K. Sachs and H.~Wu.
\newblock General relativity and cosmology.
\newblock {\em Bull. Amer. Math. Soc.}, 83:1101--1164, 1977.

\end{thebibliography}

\end{document}